\documentclass[13pt,twoside,a4paper]{article}
\usepackage{amssymb,amsmath,amsfonts,amsthm}
\usepackage{mathtools} 
\usepackage{tkz-euclide}
\usepackage{graphicx}
\usepackage[T1]{fontenc}
\usepackage{wesa}
\usetikzlibrary{patterns}

\usepackage{mathrsfs}
\usepackage{bbm}
\usepackage[colorlinks, citecolor=blue,pagebackref,hypertexnames=false]{hyperref}

\allowdisplaybreaks
\theoremstyle{plain}
\newtheorem{thm}{Theorem}[section]
\newtheorem{prop}[thm]{Proposition}
\newtheorem{lem}[thm]{Lemma}

\theoremstyle{definition}
\newtheorem{defn}[thm]{Definition}
\newtheorem{remark}[thm]{Remark}

\begin{document}

\title{\bf\Large Maximal functions with twisted structures, distribution  inequality and applications}
\author{Ji Li, Chong-Wei Liang, Chaojie Wen and Qingyan Wu}
\date{}
\maketitle
\let\thefootnote\relax\footnotetext{2020 \textit{Mathematics Subject Classification.} Primary 42B25, 42B30; Secondary 42B20.}
\footnotetext{\textit{Key words and phrases.} Fefferman--Stein type distribution inequality, twisted multiparameter singular integrals, non-tangential maximal function, Lusin area function, twisted Hardy spaces.}

\begin{abstract}
Motivated by the geometric reduction of Cauchy--Szeg\H{o} projections on quadratic surfaces of higher codimension \cite{NRS} and recent developments on the real-variable theory adapted to twisted multiparameter structures \cite{FLLWW}, we establish the Fefferman--Stein type distribution inequality relating the twisted area function and the twisted  non-tangential maximal function over $\mathbb{R}^{2m}$. By deploying a recursive integration-by-parts argument involving the twisted gradient and Laplacian, and constructing smooth, compactly supported weight functions to absorb cross-derivative errors, we obtain the required estimate. As an application, we prove the uniform $L^1$ boundedness of the twisted maximal function on the twisted atoms and complete the maximal function characterization of the twisted Hardy space.
\end{abstract}

\section{Introduction and statement of main results}
\setcounter{equation}{0}

A classical theme in harmonic analysis is the comparison between the non-tangential maximal function and the Lusin area integral. This equivalence is central to the real-variable theory of Hardy spaces and to the study of boundary value problems for partial differential equations. In the one-parameter setting, Fefferman and Stein \cite{FS72} established the corresponding good-$\lambda$ inequality. More recent developments include the work of Hofmann, Kenig, Mayboroda and Pipher \cite{HKMP} on square function and non-tangential maximal function estimates for the Dirichlet problem associated with non-symmetric elliptic operators.

The extension of this theory to multiparameter settings has been an important direction in harmonic analysis. As Stein emphasized in his 1998 survey \cite{St98}, multiparameter methods are closely connected with geometric problems arising in several complex variables. This point of view led to the development of the classical product theory; see, for example, Gundy--Stein \cite{GuS}, Chang--Fefferman \cite{CF80, CF85}, and Fefferman--Stein \cite{Feff-S}. It also motivated the later theory of flag singular integrals, initiated by M\"uller, Ricci, and Stein \cite{MRS} and further developed by Nagel, Ricci, Stein, and Wainger \cite{NRS, NRSW12, NRSW18}.

However, as observed by Nagel, Ricci, and Stein \cite{NRS}, the class of product operators is not stable under passage to quotient subgroups. Such quotient constructions arise naturally in the study of the Cauchy--Szeg\H{o} projection and the solution operators for $\bar{\partial}_b$ on higher-codimensional quadratic surfaces in $\mathbb{C}^n$. After passing to a quotient, the singularities of a product kernel become intertwined, producing operators that are singular along both coordinate hyperplanes and diagonals. This falls outside the scope of the classical multiparameter theories described above.

To treat this situation, the theory of twisted multiparameter singular integrals was initiated in \cite{FLLWW}. In that paper, a real-variable framework on $\mathbb{R}^{2m}$ was developed through the introduction of twisted tubes $T(\mathbf{x},\mathbf{r})$ and the associated tube maximal function. By classifying dyadic tubes into five geometric types and proving a type-preserving twisted covering lemma, \cite{FLLWW} introduced the twisted atomic Hardy space $H^1_{Tw,atom}(\mathbb{R}^{2m})$ and proved that it coincides with the area-function Hardy space $H^1_{area,\varphi}(\mathbb{R}^{2m})$.

In the present paper, we establish a Fefferman--Stein type good-$\lambda$ inequality in the twisted setting. Our argument is motivated by Merryfield's work on the polydisc \cite{M} and by recent work on the Shilov boundaries of finite-type domains \cite{Li_Pisa}, where one avoids the use of Fourier and group structures. The key point in our setting is that the twisted Poisson extension $U_f$ is harmonic in each block, that is, $\Delta_j U_f=0$ for $j=1,2,3$.

Using this property, we construct smooth tensor-valued auxiliary weights to absorb the cross-derivative terms arising in the integration-by-parts argument. This allows us to transfer the space-time derivatives $\nabla_1$, $\nabla_2$, and $\nabla_3$ from the main expression to the indicator of a proxy good set. Combining with the $L^2$ boundedness from the twisted Littlewood--Paley theory, this yields the desired good-$\lambda$ inequality and, as a consequence, the maximal-function characterization of the twisted Hardy space.

We first recall the underlying geometric structure induced by the quotient map established in \cite{FLLWW}. Let $\pi: \mathbb{R}^m \times \mathbb{R}^m \times \mathbb{R}^m \to \mathbb{R}^m \times \mathbb{R}^m$ be the projection defined by \begin{align}\label{proj}
    \pi(x_1, x_2, x_3) = (x_1+x_3, x_2+x_3).
\end{align}
A natural ball is the Cartesian product of three balls in Euclidean spaces $\mathbb{R}^m$. Thus a natural ball in the projecting space $\mathbb{R}^{2m}$ is the image of such a
  product under the projection   $\pi$. Motivated by this, we introduce the notion of a tube $ T(\mathbf{x},\mathbf{r})$
for $\mathbf{x}\in {\mathbb R}^{2m}$ and $  \mathbf{r} :=\left({r_1} ,
          {r_2} ,r_3 \right)\in \mathbb{  R}^3_+$ (see Section \ref{sec:2} for the definition). 
          
For any $\mathbf{x} \in \mathbb{R}^{2m}$, the translated tube is denoted by $T(\mathbf{x}, \mathbf{r}) := \mathbf{x} + T(\mathbf{0}, \mathbf{r})$, and its volume is denoted by $V_{\mathbf{r}} := |T(\mathbf{x}, \mathbf{r})|$. Based on this twisted tube system, for any aperture $\beta > 0$, we define the \textit{twisted non-tangential region} (or cone) of aperture $\beta$ at $\mathbf{x}$ by
\begin{align}
\Gamma^\beta(\mathbf{x}) := \left\{ (\mathbf{y}, \mathbf{r}) \in \mathbb{R}^{2m} \times \mathbb{R}^3_+ : \mathbf{y} \in T(\mathbf{x}, \beta\mathbf{r}) \right\},
\end{align}
where $\beta\mathbf{r} = (\beta r_1, \beta r_2, \beta r_3)$. When $\beta = 1$, we simply write $\Gamma(\mathbf{x})$.

We now introduce the test functions and maximal operators adapted to this setting.

\begin{defn}
Let $\mathbf{x}=(x_1,x_2)\in\mathbb R^{2m}$ and define
$$\phi(\mathbf{x}) := \left(\left(\phi^{(1)}\otimes\phi^{(2)}\right)*_3\phi^{(3)}\right)(\mathbf{x}) = \int_{\mathbb{R}^m} \phi^{(1)}(x_1 - u) \phi^{(2)}(x_2 - u) \phi^{(3)}(u) \, du,$$ where $\phi^{(j)} \in \mathcal{S}(\mathbb{R}^m)$ satisfies
\(
\int_{\mathbb{R}^m}\phi^{(j)}(x_j)\,dx_j=1,\, \text{for all}\,\,j=1,2,3.
\)
We denote by $\mathfrak{D}_{tw}(\mathbb{R}^{2m})$ the collection of all functions $\phi$ that satisfy the above condition.
\end{defn}

Let $\phi\in\mathfrak{D}_{tw}(\mathbb{R}^{2m})$ be given and let $f\in L^1(\mathbb{R}^{2m})$. The \textit{twisted non-tangential maximal function} of $f$ is defined by sweeping the twisted non-tangential region:
\begin{align*}
M^*_{\phi}(f)(\mathbf{x}) := \sup_{(\mathbf{y},\,\mathbf{r})\in\Gamma(\mathbf{x})}\left|\phi_{\mathbf{r}}*f\right|(\mathbf{y}),
\end{align*}
where $\phi_{\mathbf{r}}(\mathbf{x}) := \left(\left(\phi^{(1)}_{r_1}\otimes\phi^{(2)}_{r_2}\right)*_3\phi^{(3)}_{r_3}\right)(\mathbf{x})$, and for each $j=1,2,3$, the dilation is defined as $\phi^{(j)}_{r_j}(x) := r_j^{-m}\phi^{(j)}(x/r_j)$ for $x \in \mathbb{R}^m$.
\medskip

\begin{defn}
For $j=1,2,3$, let $Poi^{(j)}$ be the standard Poisson kernel on $\mathbb{R}^m$, that is
\begin{align*}
Poi^{(j)}(x_j) = \frac{c_m}{(1+|x_j|^2)^{\frac{1}{2}(m+1)}}.  
\end{align*}
We define the \textit{twisted Poisson kernel} via the push-forward along the fibers of $\pi$ as
\begin{align}\label{twistedPoisson r}
Poi_{twist}(\mathbf{x}) &:= \pi_{*}\left( Poi^{(1)}\otimes Poi^{(2)}\otimes Poi^{(3)}\right)(\mathbf{x})
= (Poi^{(1)}\otimes Poi^{(2)})*_3 Poi^{(3)}(\mathbf{x}).
\end{align}
For each $L^1(\mathbb{R}^{2m})$ function $f$, the \textit{twisted Poisson integral} (or extension) of $f$ is given by
\begin{align}
U_f(\mathbf{x},\,\mathbf{r}) := (Poi_{twist})_{\mathbf{r}}*f(\mathbf{x}).
\end{align}
\end{defn}

With the suitable twisted Poisson structure, we introduce the \textit{twisted non-tangential maximal function}.
\begin{defn}
Let $f\in L^1(\mathbb{R}^{2m})$ and let $\beta > 0$. We define the \textit{twisted non-tangential maximal function} of $f$ by
\begin{align*}
U_f^*(\mathbf{x}) := \sup_{(\mathbf{y},\,\mathbf{r})\in\Gamma^\beta(\mathbf{x})}\left| U_f(\mathbf{y},\,\mathbf{r})\right|,
\end{align*}
where $\Gamma^\beta(\mathbf{x})$ is the twisted non-tangential cone defined above. We define the corresponding \textit{twisted area function} by
\begin{align*}
S_{twist}(f)(\mathbf{x}) := \left( \iint_{\Gamma(\mathbf{x})} |r_1\nabla_1 r_2\nabla_2 r_3\nabla_3 U_f(\mathbf{y}, \mathbf{r})|^2 \frac{d\mathbf{y} d\mathbf{r}}{r_1 r_2 r_3 V_{\mathbf{r}}} \right)^{1/2},
\end{align*}
where $\nabla_j = (\nabla_{x_j},\partial_{r_j})$ for $j=1,2$, $\nabla_3 := (\nabla_{x_1}+\nabla_{x_2}, \partial_{r_3})$ and $d\mathbf{r} = dr_1 dr_2 dr_3$.
\end{defn}

The main result of this paper is the following Fefferman--Stein type good-$\lambda$ inequality, establishing the $L^1$ equivalence between the maximal function and the area function in the twisted setting.
\begin{thm}\label{thm:good_lambda}
There exist constants $C>0$ and $\beta>1$ such that for all $f \in C_0^\infty(\mathbb{R}^{2m})$ and all $\lambda>0$,
\begin{align*}
&\left|\{\mathbf{x} \in \mathbb{R}^{2m}: S_{twist}(f)(\mathbf{x})>\lambda\}\right| \\
&\leq C\left|\{\mathbf{x} \in \mathbb{R}^{2m}: U_f^*(\mathbf{x})>\lambda\}\right|+\frac{C}{\lambda^{2}}\int_{\{U_f^*(\mathbf{x}) \le \lambda\}} U_f^*(\mathbf{x})^{2} d\mathbf{x}.
\end{align*}
\end{thm}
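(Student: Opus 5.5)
We follow the Merryfield scheme for the polydisc, adapted to twisted tubes. Set $E:=\{\mathbf{x}: U_f^*(\mathbf{x})>\lambda\}$ and define the good set through the strong twisted tube maximal function $M_{tw}$ from \cite{FLLWW}:
\[
G:=\{\mathbf{x}: M_{tw}(\chi_E)(\mathbf{x})<1/2\},
\qquad |G^c|\le C|E|,
\]
where the bound on $|G^c|$ is the $L^2$ (or weak type) boundedness of $M_{tw}$. Then
\[
|\{S_{twist}f>\lambda\}|\le |G^c|+|\{\mathbf{x}\in G: S_{twist}f>\lambda\}|,
\]
and by Chebyshev the second term is at most
\[
\lambda^{-2}\int_G S_{twist}(f)^2\,d\mathbf{x}\simeq \lambda^{-2}\iint_{R(G)}|r_1\nabla_1r_2\nabla_2r_3\nabla_3U_f|^2\,\frac{|T(\mathbf{y},\mathbf{r})\cap G|}{V_{\mathbf{r}}}\,\frac{d\mathbf{y}\,d\mathbf{r}}{r_1r_2r_3}.
\]
Here $R(G)=\bigcup_{\mathbf{x}\in G}\Gamma(\mathbf{x})$. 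On $R(G)$ the tube $T(\mathbf{y},\mathbf{r})$ meets $G$, so it is not mostly contained in $E$. Choosing $\beta$ large, via the type-preserving geometry of tubes, gives $|U_f(\mathbf{y},\mathbf{r})|\le\lambda$ on an enlarged region $\widetilde R(G)\supset R(G)$. Indeed, some point of $T(\mathbf{y},\beta\mathbf{r})$ lies outside $E$, and the aperture-$\beta$ cone there contains $(\mathbf{y},\mathbf{r})$.

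Next we replace the indicator of $R(G)$ by a smooth weight. We take $\eta=\eta(\mathbf{y},\mathbf{r})=\psi(\cdot)$ obtained by mollifying $\chi_{\widetilde G}$ at scale $\mathbf{r}$, with
\[
\widetilde G:=\{M_{tw}\chi_{G^c}<c\},
\]
and possibly truncate in $r_j\in(\epsilon,1/\epsilon)$ so the weight is compactly supported. The weight should satisfy $\eta\simeq1$ on $R(G)$ and $\operatorname{supp}\eta\subset\widetilde R(G)$. Its derivatives $r_j|\nabla_j\eta|$ and the mixed $r_ir_j|\nabla_i\nabla_j\eta|$, $r_1r_2r_3|\nabla_1\nabla_2\nabla_3\eta|$ must be bounded. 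Since $f\in C_0^\infty$, all boundary terms at $r_j\to0,\infty$ are controlled, and we later let $\epsilon\to0$ by monotone convergence.

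The core step is the identity
\[
\Delta_j(U^2)=2|\nabla_jU|^2,
\]
which holds because $\Delta_jU_f=0$ for each $j$. Iterating it gives
\[
|\nabla_1\nabla_2\nabla_3U|^2\lesssim \Delta_1\Delta_2\Delta_3(U^2)+\text{cross terms}.
\]
The cross terms are of the form $\nabla_i U\cdot\nabla_i\nabla_j\nabla_k U$, and the lower-order products are handled recursively. We integrate $\iint \eta\, r_1r_2r_3\,\Delta_1\Delta_2\Delta_3(U^2)\,d\mathbf{y}\,d\mathbf{r}$ by parts in each block. Each integration by parts moves one $\nabla_j$, together with the factor $r_j$ via $\partial_{r_j}(r_j)=1$, onto $\eta$. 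The result is a sum of terms bounded by
\[
\iint_{\widetilde R(G)}|U|^2\cdot\Big|\prod_j r_j\nabla_j\text{-derivatives of }\eta\Big|\,\frac{d\mathbf{y}\,d\mathbf{r}}{r_1r_2r_3}.
\]
These terms are supported where $\nabla\eta\ne0$, a boundary layer of $\widetilde R(G)$ in the respective parameter. Using $|U|\le\lambda$ there, or more precisely $|U(\mathbf{y},\mathbf{r})|\le U_f^*(\mathbf{x})\chi_{\{U_f^*\le\lambda\}}(\mathbf{x})$ averaged over $\mathbf{x}$ in the tube, each layer integral reduces in $\mathbf{r}$ to a bounded quantity, since $\int |r_j\partial_{r_j}\eta|\,dr_j/r_j\lesssim1$. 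This gives the bound
\[
\int_{\{U_f^*\le\lambda\}}(U_f^*)^2+\lambda^2|\widetilde G^c|,
\]
with $|\widetilde G^c|\lesssim|E|$.

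The cross terms are the main obstacle. In a pure product setting one would simply use Cauchy–Schwarz and absorb. Here, however, $\nabla_3=(\nabla_{x_1}+\nabla_{x_2},\partial_{r_3})$ does not commute trivially in support with $\nabla_1,\nabla_2$ relative to the tube geometry, so mixed terms like $r_ir_j\nabla_i\nabla_jU\cdot\nabla_k\eta$ appear. The plan is to build tensor-valued weights $\eta_{ij}$ adapted to each pair of blocks, with the same support and derivative bounds, and to estimate the mixed terms by Cauchy–Schwarz, for instance
\[
\delta\iint\eta\,|r\nabla r\nabla r\nabla U|^2+C_\delta\iint\eta_{ij}|r_i\nabla_ir_j\nabla_jU|^2.
\]
The first piece is absorbed into the left side. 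The two-parameter and one-parameter square integrals are treated recursively by the same integration-by-parts argument. For the remaining global pieces we use the $L^2$ bound
\[
\|S^{(ij)}(f\chi)\|_2\lesssim\|f\chi\|_2
\]
from the twisted Littlewood–Paley theory. Collecting the estimates yields the stated inequality.
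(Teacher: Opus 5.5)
There is a genuine gap where you bound the terms carrying derivatives of the weight by $\lambda^2|\widetilde G^c|$, on the grounds that ``each layer integral reduces in $\mathbf r$ to a bounded quantity, since $\int|r_j\partial_{r_j}\eta|\,dr_j/r_j\lesssim1$.''

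\textbf{Why that step fails.} Pointwise bounds on $r_j|\nabla_j\eta|$ and on the mixed derivatives give no integrability. The total-variation claim is also unjustified. Averages of an indicator are not monotone in the scale and can re-enter the transition interval of $\psi$ at many scales. For instance, in the dyadic model take $F=\{\sum_{k\le K}\varepsilon_k>0\}$ with $\varepsilon_k$ Rademacher and $K$ odd. Then $\sum_k\|\psi(\mathbb E[\chi_F\mid\mathcal F_{k+1}])-\psi(\mathbb E[\chi_F\mid\mathcal F_k])\|_{L^1}$ is of order $\sqrt K$, while $|F|=1/2$.

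More seriously, after moving all derivatives onto $\eta\,r_1r_2r_3$ you need $\iint|\prod_{j=1}^3(r_j\Delta_j+2\partial_{r_j})\eta|\,d\mathbf y\,d\mathbf r\lesssim|\widetilde G^c|$. This is an $L^1$ bound on \emph{mixed} derivatives of a cutoff of the tent, and no one-variable variation bound gives it. In the multiparameter setting the tent is not a Lipschitz sawtooth. For fixed $\mathbf y$, the set $\{\mathbf r:T(\mathbf y,\mathbf r)\cap G\ne\emptyset\}$ has a staircase-shaped boundary in $\log\mathbf r$-space, and mixed derivatives of any smoothing of it sit at the corners. The mass of these corners is governed by sums of $|R|$ over maximal tubes in $G^c$, which need not be $\lesssim|G^c|$ (Carleson's example). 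This is why the covering lemmas carry the factors $(\ell(I)/\ell(\widehat I))^\kappa$.

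Tensor weights $\eta_{ij}$ with ``the same support and derivative bounds'' do not remove this obstruction. The obstruction is the \emph{linear} occurrence of derivatives of a non-harmonic weight. Incidentally, $\Delta_1\Delta_2\Delta_3(U^2)=8|\nabla_1\nabla_2\nabla_3U|^2$ holds exactly, so all cross terms come from the weight.

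\textbf{The missing idea.} It is Merryfield's actual device, and it is what the paper uses. Take the weight to be $\varphi(U_g)^2$, where $U_g$ is the twisted Poisson extension of $g=\chi_{\{U_f^*\le\lambda\}}$ and $\varphi$ is a smooth cutoff.
\begin{itemize}
\item Lemma~\ref{lem:strict_separation} gives $\varphi(U_g)=1$ on the tent over the proxy set $A_\beta(\lambda)=\{M_{tube}(\chi_{E_\beta(\lambda)^c})\le 1/(10C_0)\}$. It also places all derivatives of $\varphi(U_g)$ inside the tent over $\{U_f^*\le\lambda\}$, where $|U_f|\le\lambda$.
\item Because $\Delta_jU_g=0$, the quantity $\Delta_j(\varphi(U_g)^2)=2(\varphi'^2+\varphi\varphi'')(U_g)|\nabla_jU_g|^2$ has no linear term.
\item The paper integrates $\partial_{r_j}^2$ of the \emph{whole} product against $r_j\,dr_j$. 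This leaves only the boundary value at $r_j=0$, so no first-order $\partial_{r_j}$ of the weight appears. The cross term is handled by Young's inequality and absorption.
\item Iterating over the three blocks, every error term is quadratic in derivatives of $U_g$. Using $|U_f|\le\lambda$ and $|U_g|\le1$, these are bounded by $\lambda^2\iint|\nabla_1\nabla_2\nabla_3U_h|^2r_1r_2r_3\,d\mathbf y\,d\mathbf r$ and its lower-parameter analogues, with $h=1-g$.
\item The $L^2$ Littlewood--Paley bound then gives $\lambda^2\|h\|_2^2=\lambda^2|\{U_f^*>\lambda\}|$. The full boundary value at $\mathbf r=0$ gives $\int_{\{U_f^*\le\lambda\}}|f|^2$.
\end{itemize}

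\textbf{Two further corrections.} In your last step the $L^2$ theory must be applied to the indicator $h$, after pulling out $\lambda^2$ pointwise, not to $f\chi$. The square function of $f$ over a region is not that of $f\chi$. Also, your weight should be built from the good set $\{U_f^*\le\lambda\}$, with the proxy set defined by a small density threshold. Mollifying $\chi_{\widetilde G}$ with $\widetilde G\subset G$ does not guarantee $\eta\simeq1$ on $R(G)$.
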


As an application, we obtain the following characterization.
\begin{thm}\label{secondmainthm}
The area function Hardy space $H^1_{area, \varphi}(\mathbb{R}^{2m})$ introduced in \cite{FLLWW} can be characterized equivalently by the non-tangential maximal function $U_f^*$. That is, $H^1_{area, \varphi}(\mathbb{R}^{2m})$
coincides with $H^1_{max}(\mathbb{R}^{2m})
=\{f\in L^1(\mathbb{R}^{2m}): U_f^*\in L^1(\mathbb{R}^{2m})\}$, equipped with the norm  $\|f\|_{H^1_{max}(\mathbb{R}^{2m})}$ $=\|U_f^*\|_{L^1(\mathbb{R}^{2m})} $.
\end{thm}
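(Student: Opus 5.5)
The statement to prove is Theorem~\ref{secondmainthm}, and the plan is to prove the two inclusions separately, using Theorem~\ref{thm:good_lambda} for one direction and the atomic decomposition of \cite{FLLWW} for the other.

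\emph{Inclusion $H^1_{max}\subset H^1_{area,\varphi}$.} Integrating the distribution inequality of Theorem~\ref{thm:good_lambda} in $\lambda$ over $(0,\infty)$ gives, for $f\in C_0^\infty(\mathbb{R}^{2m})$,
\[
\|S_{twist}(f)\|_{L^1}\le C\|U_f^*\|_{L^1}+C\int_{\mathbb{R}^{2m}}U_f^*(\mathbf{x})^2\int_{U_f^*(\mathbf{x})}^\infty\lambda^{-2}\,d\lambda\,d\mathbf{x}\le C\|U_f^*\|_{L^1}.
\]
For general $f\in H^1_{max}$ I will approximate by $f_\varepsilon=f*\eta_\varepsilon$, truncated by smooth cutoffs if needed. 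I will also use $(U_{f_\varepsilon})^*\le C\,M_s(U_f^*)$ or, more simply, the bound $|U_{f*\eta_\varepsilon}|\le \sup$ over a slightly wider cone, together with aperture-independence. The $L^1$ norms of $U^*$ for different apertures are comparable; I will prove this by the standard covering argument with twisted tubes, noting that $T(\mathbf{x},\beta\mathbf{r})$ is covered by boundedly many translates of $T(\cdot,\mathbf{r})$. Fatou's lemma then passes the estimate to the limit. Finally, I need $S_{twist}$ (Poisson version) to control the $\varphi$-area function defining $H^1_{area,\varphi}$. The cleanest route is to invoke the equivalence of area functions with different admissible kernels from the twisted Littlewood--Paley theory of \cite{FLLWW}. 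Alternatively, I can run a duality argument with twisted tent spaces and the reproducing (Calder\'on) formula.

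\emph{Inclusion $H^1_{area,\varphi}\subset H^1_{max}$.} By \cite{FLLWW}, $H^1_{area,\varphi}=H^1_{Tw,atom}$, so it suffices to show $\|U_a^*\|_{L^1}\le C$ uniformly over twisted atoms $a$, and then sum the atomic decomposition. Take an atom supported in an open set $\Omega$ of finite measure with the usual tube-by-tube decomposition $a=\sum_R a_R$ into pieces of the five geometric types. I will split $\mathbb{R}^{2m}$ into an enlargement $\widetilde\Omega$, defined via the tube maximal function, and its complement. On $\widetilde\Omega$, I will use Cauchy--Schwarz and the $L^2$ boundedness of $U^*$. This boundedness follows from $U^*\lesssim$ an iterated or tube maximal function, which is $L^2$ bounded. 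Together with $|\widetilde\Omega|\lesssim|\Omega|$, this gives $\int_{\widetilde\Omega}U_a^*\le C|\Omega|^{1/2}\|a\|_2\le C$.

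Off $\widetilde\Omega$, I will estimate each $U_{a_R}^*$ using the cancellation of $a_R$ in the relevant variables: $a_R=\Delta_1^{N}\Delta_2^{N}\Delta_3^{N}b_R$-type or mixed, depending on the type. Moving the derivatives onto the twisted Poisson kernel produces decay factors $(\ell_j(R)/r_j)$ and kernel tails. I will then sum using Journé-type embedding, i.e.\ the type-preserving covering lemma of \cite{FLLWW}, to control the ratio of $R$ to its maximal enlargement inside $\widetilde\Omega$. The Poisson kernel has only polynomial decay of order $m+1$, so the gain will be a small power, which suffices for summation.

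\emph{Main obstacle.} I expect the hardest part to be this off-support estimate for tubes of the \emph{twisted} types, where the cancellation is only along $x_1+x_2$ directions (the $\nabla_3$ block). There the Poisson tails in the three fibered variables interact through the convolution $*_3$. My first attempt will be to push forward the product estimate from $\mathbb{R}^{3m}$ via $\pi$, bounding $U_{a_R}^*$ by the image of a product-type maximal estimate. I will then use the covering lemma in its type-preserving form to sum.
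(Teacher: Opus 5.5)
\textbf{Where the proposal falls short.} Your overall architecture matches the paper's. Integrating the good-$\lambda$ inequality gives $\|S_{twist}(f)\|_{L^1}\lesssim\|U_f^{*}\|_{L^1}$. Your uniform bound $\|U_a^{*}\|_{L^1}\le C$ on twisted atoms is essentially Theorem~\ref{thm:M-atom}: the local part uses $U_f^{*}\le C_0M_{tube}f$ and $L^2$, and the exterior part moves the $\triangle$'s onto the kernel and sums with the type-preserving covering lemmas.

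The gap is the bridge between the two area functions. Good-$\lambda$ controls only the \emph{Poisson} area function $S_{twist}$, while $H^1_{area,\varphi}$ is defined with a Schwartz mean-zero $\varphi$. So you must show that $S_{twist}(f)\in L^1$ forces an atomic decomposition with $\sum|\lambda_k|\lesssim\|S_{twist}(f)\|_{L^1}$. Citing kernel-independence from \cite{FLLWW} is not available off the shelf. The kernels there are Schwartz, whereas $r_j\nabla_j$ of the Poisson kernel has no compact support and only polynomial decay. The paper states explicitly that this is the one place where input beyond \cite{FLLWW} is needed.

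That input is Theorem~\ref{thm:reproducing}: $f=\int_{\mathbb{R}^3_+}f*\psi_{\mathbf r}*\partial_{\mathbf r}(Poi_{twist})_{\mathbf r}\,\frac{d\mathbf r}{\mathbf r}$ with $\psi^{(j)}\in C_0^\infty(\mathbb{R}^m)$ of high-order cancellation. It is built from the one-parameter Calder\'on condition in each of $\xi_1$, $\xi_2$, $\xi_1+\xi_2$, which is possible because $e^{-r_1|\xi_1|-r_2|\xi_2|-r_3|\xi_1+\xi_2|}$ factorizes.
\begin{itemize}
\item Its coefficients $r_1r_2r_3\partial_{r_1}\partial_{r_2}\partial_{r_3}U_f$ are one component of $r_1\nabla_1r_2\nabla_2r_3\nabla_3U_f$, hence controlled by $S_{twist}$.
\item The compact support and cancellation of $\psi_{\mathbf r}$ are what yield particles $a_R=\triangle_1^{N_1}\triangle_2^{N_2}\triangle_{twist}^{N_3}b_R$ with $\operatorname{supp}b_R\subset R^*$, once the level-set decomposition of \cite[Theorem 1.9]{FLLWW} is run on $\{S_{twist}(f)>2^k\}$.
\end{itemize}
Only then does $H^1_{Tw,atom}=H^1_{area,\varphi}$ close the loop. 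Your fallback (tent-space duality plus a reproducing formula) points at the right tool but does not specify this Poisson/compact-support pairing. Moreover, in the twisted setting the $T^1$-boundedness of the kernel-change operator would itself need the type-preserving covering lemmas, i.e.\ essentially the same atomic argument.

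\textbf{A smaller point on density.} Your density step does not work as written. $M_s(U_f^{*})$ need not be integrable. And for $\mathbf r$ much smaller than $\varepsilon$, a fixed-width mollifier is not absorbed by a slightly wider cone. Use translation invariance of the cones instead: if $(\mathbf y,\mathbf r)\in\Gamma(\mathbf x)$ then $(\mathbf y-\mathbf z,\mathbf r)\in\Gamma(\mathbf x-\mathbf z)$. Hence $U^{*}_{f*\eta_\varepsilon}\le U_f^{*}*|\eta_\varepsilon|$ pointwise and $\|U^{*}_{f*\eta_\varepsilon}\|_{L^1}\le\|U_f^{*}\|_{L^1}$, while Fatou in the cone integral gives $S_{twist}(f)\le\liminf_{\varepsilon\to0}S_{twist}(f*\eta_\varepsilon)$. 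Truncating by smooth cutoffs does not commute with the extension and would lose the $U^{*}$ control. Rather than truncate, check that the good-$\lambda$ argument runs directly for smooth $f\in L^1\cap L^2$. The paper applies Theorem~\ref{thm:good_lambda} to $f\in H^1_{max}$ without comment.
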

We note that Theorem \ref{thm:good_lambda} implies directly that 
$\|S_{twist}(f)\|_{L^1(\mathbb{R}^{2m})} \lesssim \|U_f^*\|_{L^1(\mathbb{R}^{2m})}$. The reverse direction $\|U_f^*\|_{L^1(\mathbb{R}^{2m})} \lesssim \|S_{twist}(f)\|_{L^1(\mathbb{R}^{2m})} $ relies on the atomic decomposition of the function $f$ under the condition that $\|S_{twist}(f)\|_{L^1(\mathbb{R}^{2m})}<\infty$ and on the argument that the $L^1$ norm of twisted non-tangential maximal function acting on every twisted atom is uniformly bounded.

As a further application, Theorem \ref{thm:good_lambda} provides the tool to transfer endpoint estimates from the maximal function to the area function. Since the non-tangential maximal function $U^*$ is dominated by the tube maximal function (see Section \ref{sec:2} for its property), $U^*$ is bounded from $L\log L(\mathbb{R}^{2m})$ to the hyper-weak $L^1$ space, the good-$\lambda$ inequality  implies that the twisted area function $S_{twist}$ inherits this   same endpoint boundedness (we refer to \cite{CLLP} and \cite{HLLW2} for this argument in the tensor product and the flag setting, respectively). To be more explicit, we have
\begin{prop}
    For every $\lambda>0$, and for $f$ with $\int_{\mathbb{R}^{2m}} |f(\mathbf{x})| \log(e+ |f(\mathbf{x})|) d\mathbf{x}<\infty$, 
    \begin{align*}
        \left|\{\mathbf{x}\in\mathbb{R}^{2m}: S_{twist}(f)(\mathbf{x})>\lambda\}\right|
        \lesssim \int_{\mathbb{R}^{2m}} {|f(\mathbf{x})|\over \lambda} \log\bigg(e+ {|f(\mathbf{x})|\over \lambda}\bigg) d\mathbf{x}.
    \end{align*}
\end{prop}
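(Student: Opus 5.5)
We derive the proposition from Theorem \ref{thm:good_lambda}, combined with the endpoint bound for $U_f^*$ that comes from its domination by the tube maximal function. Fix $\lambda>0$ and write $E_\lambda=\{U_f^*>\lambda\}$. First we assume $f\in C_0^\infty(\mathbb{R}^{2m})$, so that Theorem \ref{thm:good_lambda} applies directly:
\begin{align*}
|\{S_{twist}(f)>\lambda\}|\le C|E_\lambda|+\frac{C}{\lambda^2}\int_{\{U_f^*\le\lambda\}}U_f^*(\mathbf{x})^2\,d\mathbf{x}.
\end{align*}
The plan is to bound both terms on the right by the $L\log L$ quantity $\int \frac{|f|}{\lambda}\log(e+\frac{|f|}{\lambda})$.

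For the first term we use the fact, recalled from Section \ref{sec:2}, that $U_f^*\lesssim M_{tube}f$ pointwise. The twisted Poisson kernel is a push-forward of a product of radially decreasing integrable kernels, so it is dominated by an average of tube averages, and enlarging the aperture $\beta$ only costs a constant. We also use that $M_{tube}$ satisfies the endpoint estimate $|\{M_{tube}f>\lambda\}|\lesssim\int\frac{|f|}{\lambda}\log(e+\frac{|f|}{\lambda})$. This estimate holds because $M_{tube}$ is controlled by compositions of one-parameter maximal operators acting in the three fibre directions after lifting by $\pi$, in the spirit of Jessen--Marcinkiewicz--Zygmund; if needed, it can be taken from the twisted covering lemma of \cite{FLLWW}. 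Together these bound $C|E_\lambda|$.

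For the second term we use the layer-cake formula:
\begin{align*}
\frac{1}{\lambda^2}\int_{\{U_f^*\le\lambda\}}(U_f^*)^2=\frac{2}{\lambda^2}\int_0^\lambda t\,|\{t<U_f^*\le\lambda\}|\,dt\le\frac{2}{\lambda^2}\int_0^\lambda t\,|\{M_{tube}f>ct\}|\,dt .
\end{align*}
The distribution function cannot be bounded by the $L\log L$ estimate at every level $t$, since that would introduce a divergent factor $1/t$. Instead we split $f=f\chi_{\{|f|>t/2\}}+f\chi_{\{|f|\le t/2\}}=f_1^t+f_2^t$. Since $\|M_{tube}f_2^t\|_\infty\le t/2$, for a suitable $c$ we have $\{M_{tube}f>ct\}\subset\{M_{tube}f_1^t>ct/2\}$. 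The $L\log L$ bound at height $t$ applied to $f_1^t$, whose support lies where $|f|>t/2$, then gives
\begin{align*}
|\{M_{tube}f>ct\}|\lesssim\int_{\{|f|>t/2\}}\frac{|f|}{t}\log\Big(e+\frac{|f|}{t}\Big).
\end{align*}
Inserting this and using Fubini, the $t$-integral becomes $\lambda^{-2}\int|f|\int_0^{\min(\lambda,2|f|)}\log(e+|f|/t)\,dt$. The inner integral is $\lesssim\min(\lambda,|f|)\log(e+|f|/\lambda)$ plus a harmless constant multiple of $\min(\lambda,|f|)$. Dividing by $\lambda^2$ and using $|f|\min(\lambda,|f|)/\lambda^2\lesssim \frac{|f|}{\lambda}$ when $|f|\le\lambda$, together with the obvious bound $|f|/\lambda$ otherwise, we obtain $\lesssim\int\frac{|f|}{\lambda}\log(e+\frac{|f|}{\lambda})$. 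This calculation, the region $|f|\le\lambda$ in particular, is the step we expect to need the most care: one must check that no extra logarithm or factor $\lambda/|f|$ appears. If it does, the fallback is to use the weak $(2,2)$ and strong $L^2$ bounds of $M_{tube}$ on the part of $f$ where $|f|\le\lambda$ instead.

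Finally we remove the assumption $f\in C_0^\infty$ by density in $L\log L$. Take $f_k\in C_0^\infty$ with $f_k\to f$ in $L\log L$ (and hence in $L^1$). The integrand of $S_{twist}$ converges locally uniformly on the cones, so Fatou's lemma gives $S_{twist}(f)\le\liminf_k S_{twist}(f_k)$ pointwise. Applying the estimate at height $\lambda(1-\varepsilon)$ and letting $k\to\infty$ and then $\varepsilon\to0$ yields the proposition.
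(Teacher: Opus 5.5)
Your proposal is correct and follows the route the paper indicates: Theorem~\ref{thm:good_lambda} combined with the $L\log L\to L^{1,\infty}$ bound for $U_f^*\le C_0M_{tube}f$ (Lemma~\ref{lem:Poisson-tube-max}). The paper defers to \cite{CLLP,HLLW2} the layer-cake and level-truncation treatment of the second term and the density step, which you carry out explicitly; your computation there is right, provided the truncation is at $|f|>ct/2$ rather than $t/2$, since $c=1/C_0\le 1$.

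The one justification to sharpen is the endpoint bound for $M_{tube}$. Each tube is a product of two balls in one of the unimodular coordinate systems $(x_1,x_2)$, $(x_1-x_2,x_2)$, $(x_1,x_2-x_1)$. Hence $M_{tube}$ is dominated by a sum of three bi-parameter strong maximal functions, each a composition of only two one-parameter operators. This is what yields $L\log L$; composing along all three fibre directions, as your wording suggests, would only give $L(\log L)^2$.
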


Building upon these estimates, one can develop an atomic decomposition specifically tailored for functions in $L\log L(\mathbb{R}^{2m})$. This structural decomposition will, in turn, lead directly to weak endpoint estimates for twisted Lusin area function $S_{area,  \varphi}(f)(\mathbf{x})$ defined via Schwartz functions $\varphi$ (not just Poisson kernel), twisted Calder\'on--Zygmund operators, extending the work of \cite{Feff-S}. Since this requires a substantial separate technical framework, these endpoint estimates will be developed in a subsequent paper.

This paper is organized as follows. In Section \ref{sec:2}, we collect the necessary preliminaries and prove several auxiliary lemmas, including the geometry of the projected space, the multi-harmonicity of the twisted Poisson extension and the relevant product rule identities. In Section \ref{sec:3}, we prove the twisted Fefferman--Stein good-$\lambda$ inequality. The proof is based on a recursive integration-by-parts argument together with the construction of smooth tensor-valued auxiliary weights. In Section \ref{sec:4}, we use the twisted atomic decomposition to prove the uniform $L^1(\mathbb{R}^{2m})$ boundedness of the non-tangential maximal function on twisted atoms. This gives the reverse inequality and completes the proof of Theorem \ref{secondmainthm}.


\section{Preliminaries and auxiliary lemmas}\label{sec:2}
 \subsection{Tube structures and tube maximal function}  We recall the geometric structure induced by the projection (\ref{proj}) and the mapping property of the maximal function adapted to this geometry in this section.
 
 A natural ball in $
       \mathbb{R}^{ m} \times   \mathbb{R}^{ m}\times   \mathbb{R}^{ m}
$
  is the translation of 
  \begin{equation}\label{eq:product-balls}
  \tilde{{B}} (\mathbf{0} ,\mathbf{r}):={B}_1(\mathbf{0}_1,r_1)\times  {B}_2(\mathbf{0}_2,r_2)\times  {B}_3(\mathbf{0}_3, {r}_3),
  \end{equation}  for $\mathbf{r} :=\left({r_1} ,
          {r_2} ,r_3 \right)\in \mathbb{  R}^3_+ $,    the product of three balls, respectively. A natural ball in
          $ 
       \mathbb{R}^{2 m}  $ is its image  
    under the projection   $\pi$. 
 It is direct  to see that the image of $\tilde{{B}} (\mathbf{0} ,\mathbf{r})$ under $  {\pi}$ is a
 hexagon,
   which is basically the {\it rectangle} $B_a(\mathbf{0}_1)\times B_b(\mathbf{0}_2)\subset\mathbb{R}^{2m}$ or the {\it parallelogram with base on the first direction}:
       \begin{equation*}
          P^{(first)}_{a,b }:=\{(x_1,x_2)\in \mathbb{R}^{2m}: \, |x_1-x_2|<a, |x_2|<b\},
       \end{equation*} 
or the {\it parallelogram with base on the second direction}:       
\begin{align*}
     P^{(second)}_{a,b }:=\{(x_1,x_2)\in \mathbb{R}^{2m}: \,|x_2-x_1|<a,\,|x_1|<b  \}
\end{align*}
for some $a>0  , b>0 $.
 So, if $r_1,r_2\geq r_3$, we define
$
     T(\mathbf{0},\mathbf{r}):=B_{r_1}(\mathbf{0}_1)\times B_{r_2}(\mathbf{0}_2)
    $ which is the standard rectangle;\,and if  $r_1,r_3\geq r_2$, we simply define
     $T(\mathbf{0},\mathbf{r}): =   P^{(first)}_{r_1,\,r_3 }$ in Figure 1(a);
  \,and if  $r_2,r_3\geq r_1$, we define
     $
T(\mathbf{0},\mathbf{r}): =   P^{(second)}_{r_2,\,r_3 }
  $ in Figure 1(b).

\begin{tikzpicture}[
    thick,
    >=stealth,
    axis/.style={->, thin, gray},
    label/.style={font=\small}
]

    
    
    

\begin{scope}[shift={(7,6)}]
    \draw[axis] (-3.0,0) -- (3.0,0);
    \draw[axis] (0,-2.5) -- (0,2.5);
    
    \draw (2.4, 0.4) -- (-1.6, 0.4) -- (-2.4, -0.4) -- (1.6, -0.4) -- cycle;
    
    \draw (2.4, 1.6) -- (1.2, 1.6) -- (-2.4, -1.6) -- (-1.2, -1.6) -- cycle;

    \node[label] at (0, -3) {Figure 1(a): $r_1, r_3 \geqslant r_2$};
\end{scope}

\begin{scope}[shift={(15,6)}]
    \draw[axis] (-3.0,0) -- (3.0,0);
    \draw[axis] (0,-2.5) -- (0,2.5);
    
    \draw (-0.4, 1.6) -- (0.4, 2.4) -- (0.4, -1.6) -- (-0.4, -2.4) -- cycle;
    
    \draw (-1.5, -1.0) -- (1.5, 2.2) -- (1.5, 1.0) -- (-1.5, -2.2) -- cycle;

    \node[label] at (0, -3) {Figure 1(b): $r_2, r_3 \geqslant r_1$};
\end{scope}
\end{tikzpicture}

 For $\mathbf{x}\in \mathbb{R}^{2m}$, set $ T(\mathbf{x},\mathbf{r}):= \mathbf{x}+T(\mathbf{0},\mathbf{r})$.
    We call $ T(\mathbf{x},\mathbf{r})$ a {\it tube}. Its volume is
  \begin{equation}\label{eq:tube-volume}
    | T(\mathbf{x},\mathbf{r})|\simeq  \left\{   \begin{array}{ll} r_1^m\cdot r_2^m, \qquad & {\rm if}\quad r_1,\,r_2\geq r_3, \\
    r^m_1\cdot r^m_3, \qquad & {\rm if}\quad r_1,\,r_3\geq r_2, \\
  r_2^m \cdot r_3^m,\qquad & {\rm if}\quad r_2,\,r_3\geq r_1.   \end{array} \right.
  \end{equation}   

   With these tube structures, we have the following geometric proposition, which is straightforward.   
\begin{prop}\label{prop:tube-pi}
For all $\mathbf{r}  \in \mathbb{
R}^3_+$, $ T(\mathbf{x},\mathbf{r}/2)\subset    \pi  (\tilde{{B}} (\mathbf{x} ,\mathbf{r}) )\subset T(\mathbf{x},2\mathbf{r}).$  
  \end{prop}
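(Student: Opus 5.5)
Since $\pi$ is linear, we have $\pi(\tilde B(\tilde{\mathbf x},\mathbf r))=\pi(\tilde{\mathbf x})+\pi(\tilde B(\mathbf 0,\mathbf r))$. We read $\tilde B(\mathbf x,\mathbf r)$ as a product ball centred at a lift of $\mathbf x$, and tubes are translation invariant by definition. So it suffices to prove
$$T(\mathbf 0,\mathbf r/2)\subset H_{\mathbf r}\subset T(\mathbf 0,2\mathbf r),\qquad H_{\mathbf r}:=\{(u_1+u_3,\,u_2+u_3):\ |u_j|<r_j,\ j=1,2,3\}.$$
Multiplying $\mathbf r$ by $1/2$ or $2$ preserves the ordering of $r_1,r_2,r_3$. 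Hence $T(\mathbf 0,\mathbf r/2)$, $T(\mathbf 0,\mathbf r)$ and $T(\mathbf 0,2\mathbf r)$ all fall in the same case of the definition, and we can argue case by case. When $\mathbf r$ lies on the boundary between two cases, the argument below works for whichever case hypothesis is used, so ties cause no ambiguity.

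\emph{Case $r_1,r_2\ge r_3$.} Here $T(\mathbf 0,\mathbf r)=B_{r_1}\times B_{r_2}$.
\begin{itemize}
\item Upper inclusion: for a point of $H_{\mathbf r}$ we have $|x_1|\le|u_1|+|u_3|<r_1+r_3\le 2r_1$, and similarly $|x_2|<2r_2$. So $H_{\mathbf r}\subset T(\mathbf 0,2\mathbf r)$.
\item Lower inclusion: given $|x_1|<r_1/2$ and $|x_2|<r_2/2$, choose $u_3=0$, $u_1=x_1$, $u_2=x_2$.
\end{itemize}

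\emph{Case $r_1,r_3\ge r_2$.} Here $T(\mathbf 0,\mathbf r)=P^{(first)}_{r_1,r_3}=\{|x_1-x_2|<r_1,\ |x_2|<r_3\}$.
\begin{itemize}
\item Upper inclusion: $x_1-x_2=u_1-u_2$, so $|x_1-x_2|<r_1+r_2\le 2r_1$. Also $|x_2|\le|u_2|+|u_3|<r_2+r_3\le 2r_3$. This gives membership in $P^{(first)}_{2r_1,2r_3}=T(\mathbf 0,2\mathbf r)$.
\item Lower inclusion: given $|x_1-x_2|<r_1/2$ and $|x_2|<r_3/2$, take $u_2=0$, $u_3=x_2$, $u_1=x_1-x_2$. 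These satisfy $|u_j|<r_j$, and $\pi(u_1,u_2,u_3)=(x_1,x_2)$.
\end{itemize}

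\emph{Case $r_2,r_3\ge r_1$.} This is symmetric to the previous case under interchanging the indices $1$ and $2$.
\begin{itemize}
\item Upper inclusion: $|x_2-x_1|=|u_2-u_1|<r_2+r_1\le 2r_2$ and $|x_1|<r_1+r_3\le 2r_3$.
\item Lower inclusion: choose $u_1=0$, $u_3=x_1$, $u_2=x_2-x_1$.
\end{itemize}

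There is no real obstacle here. The only points needing care are the following. The case hypothesis is what turns the triangle-inequality bound $r_i+r_j$ into $2\max$ of the relevant radii. The explicit preimages chosen for the lower inclusion must respect all three constraints $|u_j|<r_j$; each does, because one $u_j$ is set to zero and the other two are controlled by the given half-radius bounds.
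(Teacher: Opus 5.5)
Your proof is correct. The paper gives no argument for this proposition (it only calls it straightforward), and your case-by-case check is exactly the elementary verification that remark points to: the triangle inequality plus the case hypothesis gives the outer inclusion, and explicit preimages with one coordinate set to zero give the inner one. Your reading of $\pi(\tilde B(\mathbf x,\mathbf r))$ as $\mathbf x+\pi(\tilde B(\mathbf 0,\mathbf r))$ matches how the paper uses the proposition in the proof of Lemma~\ref{lem:Poisson-tube-max}. Your treatment of ties is also sound, since the inner inclusion needs no case hypothesis at all.
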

  Recall that the {\it tube maximal function} is given by
\begin{equation*}
   M_{tube}  (f)(\mathbf{x})=\sup_{\mathbf{r}\in \mathbb{R}^3_+} \frac 1{| T(\mathbf{x},\mathbf{r})|}\int_{  T(\mathbf{x},\mathbf{r})}|f (\mathbf{y})|d\mathbf{y},
\end{equation*}
which is an $L^p$-bounded operator for all $1<p<\infty$.
\begin{thm}[\cite{FLLWW}]\label{thm:maximal} For $1<p<\infty$,    tube  maximal function  $M_{tube}  $ is bounded from $L^p({\mathbb R}^{2m})$ to $L^p({\mathbb R}^{2m})$.
 \end{thm}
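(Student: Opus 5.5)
The idea is to compare $M_{tube}$ with a product-type strong maximal function on $\mathbb{R}^{3m}$ by lifting along the projection $\pi$. Concretely, I would show that for a nonnegative $f$ on $\mathbb{R}^{2m}$ and each $\mathbf{r}$, the tube average $\frac{1}{|T(\mathbf{x},\mathbf{r})|}\int_{T(\mathbf{x},\mathbf{r})} f$ is dominated by a composition of standard one-parameter Hardy--Littlewood maximal operators acting in suitable directions of $\mathbb{R}^{2m}$. The three shape regimes in the definition of $T(\mathbf{0},\mathbf{r})$ are handled separately, and the domination constant must not depend on $\mathbf{r}$.

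\textbf{Step 1: reduction to averages.} In the regime $r_1,r_2\ge r_3$, the tube $T(\mathbf{x},\mathbf{r})=B_{r_1}(x_1)\times B_{r_2}(x_2)$ is a genuine rectangle. The average is then bounded by $M^{(1)}M^{(2)}f(\mathbf{x})$, where $M^{(i)}$ is the Hardy--Littlewood maximal operator in the variable $x_i$ with the other variable frozen.

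In the regime $r_1,r_3\ge r_2$, the tube is $P^{(first)}_{r_1,r_3}=\{|x_1-x_2|<r_1,\ |x_2|<r_3\}$. Apply the linear change of variables $A(x_1,x_2)=(x_1-x_2,x_2)$, which has determinant $1$. It maps $P^{(first)}$ onto the rectangle $B_{r_1}\times B_{r_3}$. Hence the average of $f$ over $\mathbf{x}+P^{(first)}$ equals the rectangle average of $f\circ A^{-1}$ at $A\mathbf{x}$, which is bounded by $M^{(1)}M^{(2)}(f\circ A^{-1})(A\mathbf{x})$.

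The regime $r_2,r_3\ge r_1$ is symmetric, using $B(x_1,x_2)=(x_1,x_2-x_1)$.

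Therefore
\[
M_{tube}f\le M^{(1)}M^{(2)}f+\big(M^{(1)}M^{(2)}(f\circ A^{-1})\big)\circ A+\big(M^{(1)}M^{(2)}(f\circ B^{-1})\big)\circ B,
\]
up to constants coming from the volume comparison \eqref{eq:tube-volume}. The one subtle point is that the regimes overlap (for example, several of the $r_j$ may be comparable), and the sup is over all $\mathbf{r}\in\mathbb{R}^3_+$. This causes no difficulty, because every $\mathbf{r}$ falls into at least one regime.

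\textbf{Step 2: $L^p$ bounds.} Each $M^{(i)}$ is bounded on $L^p(\mathbb{R}^{2m})$ for $1<p<\infty$. This follows from the $L^p(\mathbb{R}^m)$ bound of the Hardy--Littlewood maximal function together with Fubini in the frozen variable. So the strong-type product maximal function $M^{(1)}M^{(2)}$ is $L^p$ bounded. Composition with the measure-preserving linear maps $A$ and $B$ preserves $L^p$ norms. Summing the three terms gives $\|M_{tube}f\|_{p}\lesssim\|f\|_p$.

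The main, though mild, obstacle is justifying the geometric reduction carefully. One must check that the chosen $T(\mathbf{0},\mathbf{r})$ in each regime is exactly, or comparable to, the linear image of a rectangle, and that the volume normalization $|T(\mathbf{x},\mathbf{r})|$ matches the rectangle's volume uniformly in $\mathbf{r}$. Measurability of the supremum is routine: restrict to rational $\mathbf{r}$, which is allowed because the averages vary continuously in $\mathbf{r}$ for locally integrable $f$.
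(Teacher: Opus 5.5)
This theorem is not proved in the paper. It is quoted from \cite{FLLWW} and used as a black box, for instance to get $|A_\beta(\lambda)^c|\lesssim|E_\beta(\lambda)^c|$, so there is no in-paper argument to compare with.

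Your proof is correct and self-contained. In each regime the tube is exactly the image of a product of balls under a shear of determinant one:
$A(x_1,x_2)=(x_1-x_2,x_2)$ maps $P^{(first)}_{r_1,r_3}$ onto $B_{r_1}\times B_{r_3}$, and $B(x_1,x_2)=(x_1,x_2-x_1)$ maps $P^{(second)}_{r_2,r_3}$ onto $B_{r_3}\times B_{r_2}$. Iterating the one-parameter Hardy--Littlewood bound with Fubini then gives the $L^p$ estimate.

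Three minor remarks.

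\textbf{The lifting sentence.} Your opening sentence about lifting along $\pi$ to a strong maximal function on $\mathbb{R}^{3m}$ does not describe what you actually do. It also would not work verbatim, since $f\circ\pi$ is constant along the fibres of $\pi$ and so is not in $L^p(\mathbb{R}^{3m})$. The shear argument on $\mathbb{R}^{2m}$ is the right route.

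\textbf{No volume constants are needed.} The constants from \eqref{eq:tube-volume} play no role, because the shears map tubes exactly onto rectangles of the same volume. In fact you get an exact identity. The shape in each regime ignores the smallest radius, and every pair of the other two radii is realised with the smallest radius strictly smaller, hence strictly inside one regime. Therefore
\[
M_{tube}f=\max\Big\{\mathcal{M}_sf,\ \big(\mathcal{M}_s(f\circ A^{-1})\big)\circ A,\ \big(\mathcal{M}_s(f\circ B^{-1})\big)\circ B\Big\},
\]
where $\mathcal{M}_s$ denotes the supremum of averages of $|f|$ over products $B_a(x_1)\times B_b(x_2)$.

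\textbf{Measurability.} Your claim that the averages vary continuously in $\mathbf{r}$ fails across regime boundaries, since the shape jumps when the smallest radius changes. However, the identity above reduces measurability to that of $\mathcal{M}_s$, so nothing is lost.
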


\subsection{Miscellaneous types of dyadic rectangles/ tubes}
    There are five types of  dyadic rectangles/tubes.
    
   $\bullet$ A
       {\it type ${\rm I}$  dyadic  rectangle} is the   translation   of the standard  dyadic  rectangle  $[0,2^{j_1})^m\times[0,2^{j_2})^m $ under an element of the lattice
 $
   2^{  {j}_1  } \mathbb{Z}^{m } \times  2^{ j_2   }   \mathbb{Z}^m
$; 

$\bullet$ A   {\it type ${\rm I\!I}$  dyadic  rectangle}  is the  translation    of  the  slant  dyadic  rectangle (based on the first direction)
              \begin{equation}\label{eq:slant1}
     I\times_t J:=    \left\{ \mathbf{x} \in  \mathbb{R}^{2m}:\, \mathbf{x}_1-\mathbf{x}_2\in I,\mathbf{x}_2\in J\right\} ,\quad  
     \end{equation} 
     with $$I=[0,2^{j_1})^m, J=[0,2^{j_2})^m {\rm\ \ \  and\ \ \ }
     j_1\leq j_2
     $$
       under an element of the lattice
  \begin{equation*}
   2^{  {j}_1  } \mathbb{Z}^{m } \times_t  2^{ j_2   }   \mathbb{Z}^m:=\{(\mathbf{n}_1+\mathbf{n}_2, \mathbf{n}_2):\mathbf{n}_1 \in 2^{  {j}_1  } \mathbb{Z}^{m }, \,  \mathbf{n}_2\in  2^{ j_2   }   \mathbb{Z}^m \};    
 \end{equation*}

$\bullet$  A {\it type ${\rm I\!I\!I}$  dyadic  rectangle} is defined to be the same structure as the {\it type ${\rm I\!I}$  dyadic  rectangle} but with the constraint that $j_1>j_2.$

$\bullet$  A {\it type ${\rm I\!V}$  dyadic  rectangle} is the  translation    of  the  slant  dyadic  rectangle (based on the second direction)
              \begin{equation}\label{eq:slant11}
     I\,{}_t\times J:=    \left\{ \mathbf{x} \in  \mathbb{R}^{2m}:\,  \mathbf{x}_1 \in I, \mathbf{x}_2-\mathbf{x}_1 \in J\right\}   
     \end{equation} 
     with $$I=[0,2^{j_1})^m, J=[0,2^{j_2})^m {\rm \ \ \ and \ \ \ }
     j_1\leq j_2
     $$
       under an element of the lattice
  \begin{equation*}
   2^{  {j}_1  } \mathbb{Z}^{m } \,{}_t\times  2^{ j_2   }   \mathbb{Z}^m:=\{(\mathbf{n}_1, \mathbf{n}_1+\mathbf{n}_2):\,\mathbf{n}_1 \in 2^{  {j}_1  } \mathbb{Z}^{m },\,  \mathbf{n}_2\in  2^{ j_2   }   \mathbb{Z}^m \}.    
 \end{equation*} 

 $\bullet$  A {\it type ${\rm V}$  dyadic  rectangle} is defined to be the same structure as the {\it type ${\rm I\!V}$  dyadic  rectangle} but with the constraint that $j_1>j_2.$

  \begin{remark} A {\it  dyadic  rectangle   of scale $\mathbf{j}\in \mathbb{Z}^3$} is a type ${\rm I}$ rectangle as a translation of $[0,2^{j_1})^m\times[0,2^{j_2})^m $ if $j_1, j_2\geq  j_3  $; it is a type ${\rm I\!I}$ / ${\rm I\!I\!I}$ rectangle as a translation of
    $[0,2^{j_1})^m\times_t [0,2^{j_3})^m $ if    $ \ j_1 , j_3 >  j_2$;  it is a type ${\rm I\!V}$ / ${\rm V}$ rectangle as a translation of $[0,2^{j_2})^m_t\times[0,2^{j_3})^m $ if 
         $ \ j_2 , j_3 >  j_1$. Now denote by
  $ \mathfrak{R }_{\mathbf{j}}  
$ the set of  rectangles of scale $\mathbf{j} $
 and by $
   \mathfrak{R }:=\bigcup_{\mathbf{j}\in \mathbb{Z} ^3}\mathfrak{R }_{\mathbf{j}}.
$
  the set of all rectangles.  They constitute a partition $  \mathbb{R}^{2m} $ for each $\mathbf{j}$.
  \end{remark}

Regarding the five different types of rectangles/tubes, we consider the maximal ones in $\Omega$ and the related maximal functions.  
\begin{itemize}
\item \textbf{Standard maximal rectangles}:
For an open set $\Omega$ in $ \mathbb{R}^{2m}$ with finite measure, let $m^{\rm I}_1(\Omega)$ be the class of all dyadic tubes of type ${\rm I}$ (the standard rectangles), $I\times J\subset\Omega$, which are maximal in the direction along the $x$-axis. Dyadic tubes in $m^{\rm I}_2(\Omega)$ are maximal along the $y$-axis.

    \item 
\textbf{Maximal tubes of type ${\rm I\!I}$}:
For an open set $\Omega$ in $ \mathbb{R}^{2m}$ with finite measure, let $m^{\rm I\!I}_1(\Omega)$ be the class of all dyadic tubes of type ${\rm I\!I}$, $I\times_t J\subset\Omega$, which are maximal in the direction along the $x$-axis. Dyadic tubes of type ${\rm I\!I}$ in $m^{\rm I\!I}_2(\Omega)$ are maximal in the direction of $(1,1)$. Consider the tube maximal function based on the first direction, which is defined by
\begin{align*}
    M^{\rm I\!I}_{tube}(f)(\mathbf{x}):=\sup_{\mathbf{x}\in I\times_t J}\frac{1}{|I\times_t J|}\int_{I\times_t J}|f(\mathbf{y})|\,d\mathbf{y},
\end{align*}
for suitable function $f$. Note that the tube maximal function based on the first direction is dominated by the tube maximal function $M_{tube}$, therefore $ M^{\rm I\!I}_{tube}$ is an $L^p(\mathbb{R}^{2m})$ bounded operator as well.

    \item 
\textbf{Maximal tubes of type ${\rm I\!I\!I}$, ${\rm I\!V}$ and ${\rm V}$}:
Similarly, we can define $m^{j}_1(\Omega)$, $m^{j}_2(\Omega)$ and
$M^{j}_{tube}(f)(\mathbf{x})$ for $j= {\rm I\!I\!I}, {\rm I\!V}, {\rm V}$. 
\end{itemize}

\subsection{Covering Lemma adapted to the miscellaneous dyadic rectangles}
We now present the covering lemmas. The first one is the standard one due to Journ\'e and Pipher \cite{J,JP}.
We also recall the covering lemmas in the twisted setting \cite{FLLWW}.
\begin{lem}[\cite{FLLWW}]\label{lem:Journe standard}
    Let $\Omega$
be an open subset of $ \mathbb{R}^{2m}$ with finite measure and $\kappa>0$. Then for each $I\times J\in m^{\rm I}_2(\Omega)$, there is an $\widehat{I}$ containing $I$ such that
\begin{align}
 \sum_{R= I \times J\in m^{\rm I}_2(\Omega)} |R|  \left(\frac {\ell (I)}{\ell (\widehat{I} )}\right)^\kappa \leq    C     |\Omega| 
 \end{align}
 for some constant $C$ independent of $\Omega$. Likewise, for each $I\times J\in m^{I}_1(\Omega)$, there is an $\widehat{J}$ containing $J$ such that
 \begin{align}
\sum_{R= I \times J\in m^{\rm I}_1(\Omega)} |R| \left(\frac {\ell (J)}{\ell (\widehat{J} )}\right)^\kappa \leq    C     |\Omega|.
 \end{align}
\end{lem}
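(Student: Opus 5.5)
The statement is Lemma \ref{lem:Journe standard}: for type I rectangles it is Journé's covering lemma in the product space $\mathbb{R}^m\times\mathbb{R}^m$. I would follow Journé's argument, specialized to two parameters and extended to $m$-dimensional cubes in each factor. By symmetry it suffices to prove the first estimate, for $R=I\times J\in m^{\rm I}_2(\Omega)$.

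\textbf{Choice of $\widehat I$.} Set
\[
\widetilde\Omega:=\{\mathbf{x}: M_s(\chi_\Omega)(\mathbf{x})>1/2\},
\]
where $M_s$ is the strong (product) maximal function. This operator is dominated by $M_{tube}$ restricted to $r_3$ small, so Theorem \ref{thm:maximal} gives $|\widetilde\Omega|\lesssim|\Omega|$. For $R=I\times J$, let $\widehat I$ be the largest dyadic cube containing $I$ with $\widehat I\times J\subset\widetilde\Omega$.

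\textbf{Reduction to a fixed width.} Group the rectangles by $J$. The second-direction maximality of $R$ means that for fixed $I$ the intervals $J$ are pairwise disjoint. Group further by the ratio: for each integer $k\ge0$, collect the $R$ with $\ell(\widehat I)=2^k\ell(I)$. For fixed $k$, I claim
\[
\sum_{R:\,\ell(\widehat I)=2^k\ell(I)}|R|\lesssim k\,|\Omega|,
\]
or at worst $\lesssim 2^{\epsilon k}|\Omega|$ for any $\epsilon>0$. Summing against $2^{-k\kappa}$ then gives the result.

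\textbf{Proof of the claim.} For each such $R$, the enlarged rectangle $2^{k+1}I\times J$ is not contained in $\widetilde\Omega$. Hence a fixed fraction of $2^{k+1}I\times J$ lies outside $\Omega$, and in particular $|(2^{k+1}I\times J)\cap\Omega|\le\frac12|2^{k+1}I\times J|$. Fix a dyadic $I$ and take the union $E_I$ of all $J$ with $I\times J$ in the class. Because of the maximality in the second direction, $I\times E_I$ is essentially disjoint from $I\times E'$ for the other cubes. The standard move is to compare $\sum|R|$ with $\big|\bigcup_{\ell(I)=2^{-n}}I\times E_I\big|$. Summing over the scales $n$ produces overlaps, and these are controlled by the observation that a point of $\Omega$ lies in rectangles of only $O(k)$ consecutive scales of $I$ within the same class (the maximality with respect to $\widetilde\Omega$ fails once the scale moves up by $k$). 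This yields the linear-in-$k$ bound.

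The main obstacle is this overlap counting across scales. I would handle it as Journé and Pipher do: fix $\mathbf{x}_2$, pass to the one-parameter slice $\Omega_{\mathbf{x}_2}$, and apply a one-dimensional Vitali/Whitney counting argument in the first variable. Then integrate in $\mathbf{x}_2$, using Fubini together with the bound $|\widetilde\Omega|\lesssim|\Omega|$.
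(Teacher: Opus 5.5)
The paper does not prove this lemma. It is quoted from \cite{FLLWW} and goes back to Journ\'e and Pipher \cite{J,JP}. Several parts of your proposal are right:
the set-up, including defining $\widehat I$ through $\widetilde\Omega$ (this is necessary, since as literally stated the lemma holds trivially by taking each $\widehat I$ huge);
the reduction to the classes $\mathcal R_k:=\{R\in m^{\rm I}_2(\Omega):\ell(\widehat I)=2^k\ell(I)\}$ with a bound subexponential in $k$;
and the half-density observation.

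The gap is the overlap step. It is not true that a point of $\Omega$ lies in rectangles of $\mathcal R_k$ at only $O(k)$ scales. Take $m=1$, $N$ large, and let $\Omega$ be a thin open neighbourhood of $\bigcup_{i=0}^{L}R_i$, where $R_i=I_i\times J_i:=[0,2^{Ni})\times[0,2^{-Ni})$. Then:
each $R_i$ belongs to $m^{\rm I}_2(\Omega)$, since doubling $J_i$ leaves $\Omega$;
all of them contain the point $(\tfrac12,2^{-NL-1})$;
and one checks $\widehat{I_i}=[0,2^{Ni+1})$ for every $i$. Indeed, $R_i$ fills more than half of $[0,t2^{Ni})\times J_i$ for $t<2$, while $M_s(\chi_\Omega)<\tfrac12$ at $(\tfrac72\cdot 2^{Ni},\tfrac12\cdot 2^{-Ni})$.
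So $L+1$ rectangles of the single class $k=1$, at $L+1$ different scales, pass through one point. This is consistent with the lemma only because $|\Omega|\approx L+1$: the overlap set is small, not the overlap count. Your parenthetical justification rules out a larger $R'\ni\mathbf x$ of the same class only when $J'\supseteq J$. In the staircase, $J'\subsetneq J$.

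What is true, and what your half-density observation should be used for, is that the much larger rectangles of the same class cover at most half of $R$. Let $\widehat I^{(1)}$ be the dyadic parent of $\widehat I$. This, rather than the dilate $2^{k+1}I$, is the cube with $\widehat I^{(1)}\times J\not\subset\widetilde\Omega$, so that $|\Omega\cap(\widehat I^{(1)}\times J)|\le\frac12|\widehat I^{(1)}\times J|$.

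Suppose $R'=I'\times J'\in\mathcal R_k$ meets $R=I\times J\in\mathcal R_k$ and $\ell(I')\ge 2^{k+1}\ell(I)$. Then $I'\supseteq\widehat I^{(1)}$. Moreover $J'\supseteq J$ is impossible, because it would give $\widehat I^{(1)}\times J\subset\Omega\subset\widetilde\Omega$; hence $J'\subsetneq J$. Let $F_R$ be the union of all such $J'$. It satisfies $\widehat I^{(1)}\times F_R\subset\Omega\cap(\widehat I^{(1)}\times J)$, so $|F_R|\le\frac12|J|$. Therefore $E_R:=I\times(J\setminus F_R)\subset\Omega$ has $|E_R|\ge\frac12|R|$.

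Now suppose $\mathbf x\in E_R\cap E_{R'}$ with $R\ne R'$ in $\mathcal R_k$. Then $I,I'$ are nested, and by the definition of $F_R$ and $F_{R'}$ their side lengths differ by a factor at most $2^k$. Since for fixed $I$ the cubes $J$ are pairwise disjoint, $\sum_{R\in\mathcal R_k}\chi_{E_R}\le k+1$. Hence $\sum_{R\in\mathcal R_k}|R|\le 2(k+1)|\Omega|$, and summing against $2^{-k\kappa}$ gives the first estimate; the second is symmetric.

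This argument replaces the slice-by-slice Vitali argument, which you leave unspecified. It does not even need $|\widetilde\Omega|\lesssim|\Omega|$.
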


\begin{lem}[\cite{FLLWW}]~\label{lem:Journe 1}
    Let $\Omega$
be an open subset of $ \mathbb{R}^{2m}$ with finite measure and $\kappa>0$. Then for each $I\times_t J\in m^{\rm I\!I}_2(\Omega)$, there is an $\widehat{I}$ containing $I$ such that
\begin{align}\label{Journe 1 I hat}
 \sum_{R= I \times_t J\in m^{\rm I\!I}_2(\Omega)} |R|  \left(\frac {\ell (I)}{\ell (\widehat{I} )}\right)^\kappa \leq    C     |\Omega|  
 \end{align}
 for some constant $C$ independent of $\Omega$. Likewise, for each $I\times_t J\in m^{\rm I\!I}_1(\Omega)$, there is an $\widehat{J}$ containing $J$ such that
 \begin{align}\label{Journe 1 J hat}
\sum_{R= I \times_t J\in m^{\rm I\!I}_1(\Omega)} |R| \left(\frac {\ell (J)}{\ell (\widehat{J} )}\right)^\kappa \leq    C     |\Omega|.
 \end{align}

 We also have the similar version for tubes in type ${\rm I\!I\!I}$.
\end{lem}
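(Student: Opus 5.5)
The plan is to reduce the statement to the standard Journé covering lemma, Lemma \ref{lem:Journe standard}, by a volume-preserving linear change of variables that straightens type ${\rm I\!I}$ tubes into standard rectangles. Let $L:\mathbb{R}^{2m}\to\mathbb{R}^{2m}$ be $L(x_1,x_2)=(x_1-x_2,x_2)$. Then $\det L=1$ and $L(I\times_t J)=I\times J$. The lattice $2^{j_1}\mathbb{Z}^m\times_t 2^{j_2}\mathbb{Z}^m$ is mapped onto $2^{j_1}\mathbb{Z}^m\times 2^{j_2}\mathbb{Z}^m$. Hence $L$ gives a bijection between dyadic tubes of type ${\rm I\!I}$ (respectively ${\rm I\!I\!I}$) and standard dyadic rectangles $I\times J$ with $\ell(I)\le\ell(J)$ (respectively $\ell(I)>\ell(J)$), and it preserves $|R|$, $\ell(I)$ and $\ell(J)$. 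Moreover $\widetilde\Omega:=L(\Omega)$ is open with $|\widetilde\Omega|=|\Omega|$.

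The next step is to match the two notions of maximality. Enlarging $I$ with $J$ fixed moves the tube along $x_2=\text{const}$, i.e.\ along the $x$-axis. Enlarging $J$ with $x_1-x_2$ fixed moves it along $(1,1)$. After applying $L$, these become enlargement in the first and second coordinates respectively. So $m^{\rm I\!I}_2(\Omega)$ corresponds to rectangles in $\widetilde\Omega$ that are maximal in the second variable. Since enlarging $J$ preserves the constraint $\ell(I)\le\ell(J)$, this is exactly the intersection of $m^{\rm I}_2(\widetilde\Omega)$ with the constraint class. Therefore \eqref{Journe 1 I hat} follows directly from Lemma \ref{lem:Journe standard} applied to $\widetilde\Omega$: take $\widehat I$ to be the one given there, and use that a subsum of nonnegative terms is bounded by the full sum.

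The main obstacle is \eqref{Journe 1 J hat}, because maximality in $I$ within type ${\rm I\!I}$ can be forced by the constraint $\ell(I)\le\ell(J)$ rather than by $\Omega$. Such tubes need not lie in $m^{\rm I}_1(\widetilde\Omega)$. To handle this, I would rerun Journé's argument instead of quoting it, noting that it uses only two facts:
\begin{itemize}
\item every $R$ lies in $\widetilde\Omega$;
\item for each fixed $J$, the $I$'s with $I\times J$ in the family are pairwise disjoint.
\end{itemize}
The second fact still holds for constrained maximal ones, since they are dyadic cubes and none can contain another.

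Concretely, set $\widetilde\Omega_\gamma=\{M_s\chi_{\widetilde\Omega}>\gamma\}$, where $M_s$ is the strong maximal function; equivalently one can work in the original coordinates with $M^{\rm I\!I}_{tube}$, which is bounded by Theorem \ref{thm:maximal}. Take $\widehat J$ to be the largest dyadic cube containing $J$ with $I\times\widehat J\subset\widetilde\Omega_{1/2}$. The argument then proceeds in the following steps.
\begin{enumerate}
\item Group the rectangles by $\widehat J$ of size $2^k\ell(J)$.
\item For each $J$, use disjointness of the $I$'s to bound $\sum_I|I\times J|$ by $|\{x_1:\{x_1\}\times J\subset\widetilde\Omega\}|\,|J|$.
\item Use the maximality of $\widehat J$, namely that its parent leaves $\widetilde\Omega_{1/2}$, to show that the slices $I\times\widehat J$ at level $k$ have bounded overlap inside $\widetilde\Omega_{1/2}$, up to a factor $k$ or $2^{\epsilon k}$.
\item Sum the geometric series in $2^{-k\kappa}$ and conclude with $|\widetilde\Omega_{1/2}|\lesssim|\widetilde\Omega|$.
\end{enumerate}
Type ${\rm I\!I\!I}$ is identical, with the constraint reversed; there the constraint can only bind when enlarging $J$, and the symmetric argument applies.
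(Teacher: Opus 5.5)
The paper gives no proof of this lemma; it is quoted from \cite{FLLWW}. So I judge your argument on its own merits.

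\textbf{What is correct.} The shear $L(x_1,x_2)=(x_1-x_2,x_2)$ is the right reduction, and you match the two directions of maximality correctly. \eqref{Journe 1 I hat} then follows from Lemma \ref{lem:Journe standard} applied to $L(\Omega)$, exactly as you say. For \eqref{Journe 1 J hat} your diagnosis is also right: the elements of $L(m^{\rm I\!I}_1(\Omega))$ outside $m^{\rm I}_1(L(\Omega))$ are exactly the squares $\ell(I)=\ell(J)$ with $I^{(1)}\times J\subset L(\Omega)$. It is also true that Journé's bound holds for every family with your two properties. As literally stated the lemma is vacuous, since $\widehat I$ and $\widehat J$ are unconstrained. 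Its real content is the version with $I\times\widehat J\subset\{M_s\chi_{L(\Omega)}>1/2\}$, which is what you set up and what Section 4 uses.

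\textbf{The gap is Step 3.} It carries all the weight, and as formulated it is false. The sets $I\times\widehat J$ attached to rectangles of a fixed level do not overlap only $O(k)$ or $O(2^{\epsilon k})$ times. Take $L(\Omega)=U\times U$, with $U\supset[0,1]^m$ a slightly larger open cube. Every constraint-forced square $I\times J$ with $I,J\subseteq[0,1)^m$ of side $2^{-k}$ then has the same $\widehat J=K_0\supseteq[0,1)^m$. So each set $I\times K_0$ is repeated $2^{km}$ times.

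What does have multiplicity at most $k+1$ is the family of cubes $J$ in a single $x_1$-fibre, and reaching it needs an averaging step you do not mention.
\begin{itemize}
\item $I\times\widehat J^{(1)}$ contains a point where $M_s\chi_{L(\Omega)}\le\frac12$. Hence $\int_I|L(\Omega)^{x_1}\cap\widehat J^{(1)}|\,dx_1\le\frac12|I||\widehat J^{(1)}|$, where $L(\Omega)^{x_1}$ is the $x_2$-fibre.
\item By Chebyshev, the set $I'_R\subseteq I$ where the fibre fills at most $\frac34$ of $\widehat J^{(1)}$ satisfies $|I'_R|\ge\frac13|I|$.
\item Fix $x_1$. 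Disjointness gives at most one $R$ per $J$ with $x_1\in I'_R$, and each such $J$ lies in the fibre.
\item Two such cubes $J_a\subsetneq J_b$ cannot satisfy $\ell(J_b)\ge 2^{k+1}\ell(J_a)$. Otherwise $\widehat J_a^{(1)}=J_a^{(k+1)}\subseteq J_b$ would lie entirely in the fibre, contradicting $x_1\in I'_{R_a}$.
\item Hence these cubes overlap at most $k+1$ times. Fubini then gives $\sum_{\ell(\widehat J)=2^k\ell(J)}|R|\le 3(k+1)|\Omega|$, which sums against $2^{-k\kappa}$.
\end{itemize}
This uses only your two properties.

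Alternatively, you can avoid rerunning Journé's argument altogether. Each constrained-maximal $I$ lies in a unique $I^\sharp$ with $I^\sharp\times J\in m^{\rm I}_1(L(\Omega))$, and the $I$'s inside a given $I^\sharp$ are disjoint. Moreover, $\widehat J$ can only grow as $I$ shrinks. So the constrained sum is dominated by Journé's sum for $m^{\rm I}_1(L(\Omega))$, taken with Journé's explicit $\widehat J$.

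A minor point: in the original coordinates, $M_s\circ L$ corresponds to averaging over all sheared tubes (types ${\rm I\!I}$ and ${\rm I\!I\!I}$ together), not over $M^{\rm I\!I}_{tube}$. In any case you only need $\det L=1$ and the $L^2$-boundedness of $M_s$.
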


\begin{lem}[\cite{FLLWW}]\label{lem:Journe 2}
    Let $\Omega$
be an open subset of $ \mathbb{R}^{2m}$ with finite measure and $\kappa>0$. Then for each $I \,_t\times J\in m^{\rm I\!V}_2(\Omega)$, there is an $\widehat{I}$ containing $I$ such that
\begin{align}
 \sum_{R= I\,_t \times J\in m^{\rm I\!V}_2(\Omega)} |R| \cdot \left(\frac {\ell (I)}{\ell (\widehat{I} )}\right)^\kappa \leq    C     |\Omega|  
 \end{align}
 for some constant $C$ independent of $\Omega$. Likewise, for each $I\,_t\times J\in m^{\rm I\!V}_1(\Omega)$, there is an $\widehat{J}$ containing $J$ such that
 \begin{align}
\sum_{R= I\,_t \times J\in m^{\rm I\!V}_1(\Omega)} |R|\cdot\left(\frac {\ell (J)}{\ell (\widehat{J} )}\right)^\kappa \leq    C     |\Omega|.
 \end{align}
  We also have the similar version for tubes in type ${\rm V}$.
\end{lem}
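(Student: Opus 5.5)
The statement just given is Lemma \ref{lem:Journe 2}, the Journ\'e-type covering lemma for the maximal tubes of type ${\rm I\!V}$ (and ${\rm V}$). I would not prove it from scratch. Instead I would reduce it to Lemma \ref{lem:Journe 1} (equivalently to the standard Lemma \ref{lem:Journe standard}) by a linear change of variables.

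\textbf{Step 1: the symmetry.} Let $\sigma(x_1,x_2)=(x_2,x_1)$ be the coordinate swap on $\mathbb{R}^{2m}$. Then
\[
\sigma(I\,{}_t\times J)=\{(x_1,x_2): x_2\in I,\ x_1-x_2\in J\}=J\times_t I .
\]
So $\sigma$ maps type ${\rm I\!V}$/${\rm V}$ slant rectangles to type ${\rm I\!I}$/${\rm I\!I\!I}$ ones, with the roles of $I$ and $J$ exchanged. It also maps the lattice $2^{j_1}\mathbb{Z}^m\,{}_t\times 2^{j_2}\mathbb{Z}^m$ onto $2^{j_2}\mathbb{Z}^m\times_t 2^{j_1}\mathbb{Z}^m$.

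\textbf{Step 2: transfer of maximality and measure.} Since $\sigma$ is a measure-preserving linear bijection, $\sigma(\Omega)$ is open with $|\sigma(\Omega)|=|\Omega|$. Inclusions are preserved, so maximality transfers:
\begin{itemize}
\item a tube in $m^{\rm I\!V}_2(\Omega)$, maximal in the $I$-direction, goes to a tube in $\sigma(\Omega)$ maximal in the direction of the first factor of $J\times_t I$;
\item maximality in the $J$-direction goes to maximality in the other factor.
\end{itemize}
The same holds for type ${\rm V}$ and type ${\rm I\!I\!I}$, since the constraint on the scales only swaps $j_1\le j_2$ with $j_1>j_2$ after relabelling. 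Apply Lemma \ref{lem:Journe 1} (or its type ${\rm I\!I\!I}$ version) to $\sigma(\Omega)$. This gives enlarged cubes $\widehat I\supset I$, respectively $\widehat J\supset J$, and the sum over $\sigma(R)$ is bounded by $C|\sigma(\Omega)|=C|\Omega|$. Pulling back, with $|\sigma(R)|=|R|$, yields both displayed inequalities.

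\textbf{Step 3: the obstacle.} The main point to check is the direction bookkeeping. One must confirm which family in Lemma \ref{lem:Journe 1} corresponds to $m^{\rm I\!V}_1$ versus $m^{\rm I\!V}_2$, so that the enlarged side is $\widehat I$ or $\widehat J$ as claimed. Maximality "along the $x$-axis" for $I\,{}_t\times J$ moves $I$, and after the swap this moves the second factor of $J\times_t I$.

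If the labelling in \cite{FLLWW} does not line up this way, I would instead rerun the Journ\'e argument directly. For each maximal tube, take $\widehat I$ to be the largest dyadic cube containing $I$ such that $\widehat I\,{}_t\times J$ has at least half its measure in $\widetilde\Omega=\{M^{\rm I\!V}_{tube}\chi_\Omega>1/2\}$. Then:
\begin{itemize}
\item control the sum by slicing in the variable $x_2-x_1$;
\item use the one-parameter maximal function on each slice;
\item use the $L^2$ bound of $M^{\rm I\!V}_{tube}$ (dominated by $M_{tube}$, Theorem \ref{thm:maximal}) to get $|\widetilde\Omega|\lesssim|\Omega|$.
\end{itemize}
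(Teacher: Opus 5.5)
The paper does not prove this lemma; it is quoted from \cite{FLLWW}, as are Lemmas~\ref{lem:Journe standard} and~\ref{lem:Journe 1}. Your reduction by a measure-preserving linear change of variables is sound. It is a genuinely different, and cheaper, route than re-running a Journ\'e stopping-time argument with $M^{\rm I\!V}_{tube}$, which is what your fallback sketches.

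Here is why the transfer works:
\begin{itemize}
\item The swap $\sigma$ preserves Lebesgue measure, openness and inclusions.
\item Your computation $\sigma(I\,{}_t\times J)=J\times_t I$ and your lattice identification are correct.
\item The factors are carried over as the same dyadic cubes of $\mathbb{R}^m$, so the ratios $\ell(I)/\ell(\widehat I)$ and $\ell(J)/\ell(\widehat J)$ are unchanged.
\item $\sigma$ maps level sets of the tube maximal function of a family onto those of the image family.
\end{itemize}
The last point matters. As literally stated, with no restriction on $\widehat I$, the lemma is trivial: take remote enough dyadic ancestors. The substantive version requires $\widehat R\subset\widetilde\Omega$ with $|\widetilde\Omega|\lesssim|\Omega|$, which is what Section~\ref{sec:4} uses, and your transfer carries this condition along too.

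Compared with a direct proof, which is self-contained and type-by-type, your route needs nothing beyond Lemma~\ref{lem:Journe 1}.

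Some bookkeeping needs fixing.

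\textbf{Geometry.} In $I\,{}_t\times J=\{x_1\in I,\ x_2-x_1\in J\}$, enlarging $I$ extends the tube along the diagonal $(1,1)$. Enlarging $J$ extends it along the $x_2$-axis. So ``maximality along the $x$-axis moves $I$'' is wrong. The dictionary that matches the statement is:
\begin{itemize}
\item maximal in $I$ ($m^{\rm I\!V}_1$) corresponds to maximality in the second factor of $J\times_t I$. This is the $m_2$-clause of Lemma~\ref{lem:Journe 1}, which enlarges the first factor and gives $\widehat J$;
\item maximal in $J$ ($m^{\rm I\!V}_2$) corresponds to the $m_1$-clause, which gives $\widehat I$.
\end{itemize}

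\textbf{Scale constraint.} The constraint does not swap to a strict one. $\sigma$ sends type ${\rm I\!V}$ ($\ell(I)\le\ell(J)$) onto type ${\rm I\!I\!I}$ together with the equal-sided type ${\rm I\!I}$ tubes. It sends type ${\rm V}$ into type ${\rm I\!I}$, not ${\rm I\!I\!I}$. So you must apply Lemma~\ref{lem:Journe 1} to the type ${\rm I\!I}$ and ${\rm I\!I\!I}$ families together, at the cost of a constant $2C$.

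If ``maximal'' is read within a single type, there is one more wrinkle. A type ${\rm I\!V}$ tube with $\ell(I)=\ell(J)$ is automatically maximal in the $I$-direction, but its image need not lie in $m^{\rm I\!I}_2(\sigma(\Omega))$. You can dispose of these tubes as follows:
\begin{itemize}
\item they have pairwise disjoint $I$'s inside a type ${\rm V}$ tube $I'\,{}_t\times J$ that is maximal in $I$;
\item choose $\widehat J$ as the largest cube with $I\,{}_t\times\widehat J\subset\widetilde\Omega$, which can only grow as $I$ shrinks;
\item then their total contribution is bounded by that one tube's term.
\end{itemize}

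\textbf{Fallback.} In your fallback, the stopping rule should read ``more than half its measure in $\Omega$''. That forces $\widehat I\,{}_t\times J\subset\widetilde\Omega$. ``Half in $\widetilde\Omega$'' only lands you in a second enlargement.

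\textbf{A cleaner change of variables.} The shear $(x_1,x_2)\mapsto(x_1,x_2-x_1)$ sends $I\,{}_t\times J$ to $I\times J$. It sends the type ${\rm I\!V}$/${\rm V}$ lattice to $2^{j_1}\mathbb{Z}^m\times 2^{j_2}\mathbb{Z}^m$. This reduces everything directly to Lemma~\ref{lem:Journe standard}, keeping the factor labels and avoiding the ${\rm I\!I}$/${\rm I\!I\!I}$ split. The equal-side remark above still applies if maximality is read within a single type.
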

\subsection{Twisted differential operators}
In this section, we establish the fundamental differential operators and prove the multi-harmonicity of the twisted Poisson extension, which will be used in Section \ref{sec:3}.

Write $ \mathbf{x}_j=({x}^1_j,\ldots,{x}^m_j)\in\mathbb{R}^{ m}$. For each $j \in \{1, 2\}$, let $\Delta_{x_j}$ denote the standard spatial Laplacian acting on the $j$-th component of the variables, and let $\nabla_{x_j}$ be the corresponding spatial gradient. That is,
\[
\Delta_{x_j} := \sum_{k=1}^{m} \frac{ \partial^2}{\partial (x_{j}^{k})^2}, \quad\text{and}\quad \nabla_{x_j} := (\partial_{x_j^1},\ldots, \partial_{x_j^m}).
\]
Define the full space-time Laplacian and gradient for $j=1,2$ by
\[
\Delta_j := \Delta_{x_j} + \partial_{r_j}^2, \quad\text{and}\quad  \nabla_j := (\nabla_{x_j}, \partial_{r_j});
\]
which, for $j=3$, we define
\[
\Delta_3 := \Delta_{twist} + \partial_{r_3}^2, \quad\text{and}\quad  \nabla_3 := (\nabla_{twist}, \partial_{r_3}),
\]
where
\[
\Delta_{twist} := \sum_{k=1}^{m} \left(\frac \partial {\partial x^k_{1} }+\frac \partial {\partial x^k_{2} }\right)^2 , \quad\text{and}\quad  \nabla_{twist} :=\nabla_{x_1}+\nabla_{x_2} .
\]

\begin{lem}
\label{lem:multi_harmonic}
Let $U_f(\mathbf{y}, \mathbf{r}) = (Poi_{twist})_{\mathbf{r}} * f(\mathbf{y})$ be the twisted Poisson integral of $f \in L^1(\mathbb{R}^{2m})$, where $\mathbf{r} = (r_1, r_2, r_3) \in \mathbb{R}_+^3$. Then $U_f$ is multi-harmonic, meaning $\Delta_j U_f = 0$ for each $j=1,2,3$. Consequently, we have the identity:
\begin{equation}\label{eq:harmonic_twist}
\Delta_j \left( U_f(\mathbf{y}, \mathbf{r})^2 \right) = 2|\nabla_j U_f(\mathbf{y}, \mathbf{r})|^2.
\end{equation}
\end{lem}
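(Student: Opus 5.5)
The plan is to write $U_f$ as an integral of $f$ against a kernel that factors through three ordinary Poisson kernels, and then to differentiate under the integral sign. Writing $P_t(z)=t^{-m}Poi(z/t)$ for the standard Poisson kernel on $\mathbb{R}^m$, the definition of $(Poi_{twist})_{\mathbf{r}}$ as a dilated push-forward gives
\begin{align*}
U_f(\mathbf{y},\mathbf{r})=\int_{\mathbb{R}^{2m}}\int_{\mathbb{R}^m} P_{r_1}(y_1-z_1-u)\,P_{r_2}(y_2-z_2-u)\,P_{r_3}(u)\,f(z_1,z_2)\,du\,dz_1\,dz_2 .
\end{align*}
The change of variables $z_j\mapsto z_j-u$ turns this into the alternative representation
\begin{align*}
U_f(\mathbf{y},\mathbf{r})=\int_{\mathbb{R}^m}\int_{\mathbb{R}^{2m}} P_{r_1}(y_1-w_1)\,P_{r_2}(y_2-w_2)\,P_{r_3}(u)\,f(w_1-u,w_2-u)\,dw\,du .
\end{align*}
Alternatively, substituting $u\mapsto y_1-z_1-u$ in the inner integral of the first formula yields a form in which $P_{r_3}$ carries the argument $y_1-z_1-v$.

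The cases $j=1,2$ are straightforward. In the second representation, $y_1$ and $r_1$ appear only in the factor $P_{r_1}(y_1-w_1)$. That factor is harmonic in $(y_1,r_1)$ on $\mathbb{R}^m\times\mathbb{R}_+$, since the classical Poisson kernel satisfies $(\Delta_{x}+\partial_t^2)P_t(x)=0$. Hence $\Delta_1 U_f=0$, and the same argument gives $\Delta_2U_f=0$.

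The case $j=3$ requires a little more care. For fixed $u$ and $t$, set $g(\mathbf{y})=P_{r_1}\otimes P_{r_2}$ evaluated at $(y_1-z_1-u,\,y_2-z_2-u)$, viewed as a function of $u$. Then $\partial_{u^k}g=-(\partial_{y_1^k}+\partial_{y_2^k})g$. Consequently $\nabla_{twist}$ acting in $\mathbf{y}$ equals $-\nabla_u$ acting on the first two factors of the first representation, and $\Delta_{twist}$ in $\mathbf{y}$ equals $\Delta_u$ on those factors. Integrating by parts twice in $u$ moves $\Delta_u$ onto $P_{r_3}(u)$, and combining with $\partial_{r_3}^2$ gives
\begin{align*}
\Delta_3U_f=\int\!\!\int P_{r_1}(\cdot)\,P_{r_2}(\cdot)\,\bigl(\Delta_u+\partial_{r_3}^2\bigr)P_{r_3}(u)\,f\,du\,dz=0.
\end{align*}

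The main obstacle, which is mild, is justifying differentiation under the integral sign and the integration by parts. This follows because $P_t$ and all its derivatives are bounded by $C_t(1+|u|)^{-m-1}$, locally uniformly in $t>0$. Products of such factors are integrable against $f\in L^1$, and the boundary terms vanish by this decay.

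Finally, the identity \eqref{eq:harmonic_twist} follows from the product rule. Each $\Delta_j$ is a sum of squares of first-order constant-coefficient derivations $D$, namely the components of $\nabla_j$. For each such $D$ we have $D^2(U^2)=2U D^2U+2(DU)^2$. Summing over the components gives
\begin{align*}
\Delta_j(U^2)=2U\Delta_jU+2|\nabla_jU|^2=2|\nabla_jU|^2 .
\end{align*}
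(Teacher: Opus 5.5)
Your argument is correct, but for the key case $j=3$ it takes a different route from the paper. The paper works on the Fourier side. From $\mathcal{F}_y U_f(\xi_1,\xi_2,\mathbf r)=e^{-r_1|\xi_1|-r_2|\xi_2|-r_3|\xi_1+\xi_2|}\hat f(\xi_1,\xi_2)$ it reads off that $\Delta_{twist}$ has symbol $-|\xi_1+\xi_2|^2$, while $\partial_{r_3}^2$ contributes $+|\xi_1+\xi_2|^2$. For $j=1,2$ it simply invokes the Poisson semigroup $e^{-r_j\sqrt{-\Delta_{x_j}}}$, and it then derives the identity by the same product-rule computation you give.

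You stay in physical space and use the fiber structure of the push-forward. The first two Poisson factors depend on $(\mathbf y,u)$ only through $y_1-u$ and $y_2-u$, so $\nabla_{y_1}+\nabla_{y_2}$ acts on them as $-\nabla_u$. Integrating by parts twice in $u$ then moves $\Delta_u$ onto $P_{r_3}(u)$, which is harmonic in $(u,r_3)$.

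The Fourier route is shorter, and it produces exactly the multiplier the paper reuses later in its $L^2$ Littlewood--Paley computation. Your route is purely real-variable, in line with the paper's stated aim of avoiding Fourier and group structure. It shows geometrically why harmonicity in the diagonal block survives the quotient by $\pi$. It also works for any kernel $\pi_*(K_1\otimes K_2\otimes K_3)$ with $K_3$ harmonic and suitably decaying. Your domination argument (bounded derivatives of the first two factors, $(1+|u|)^{-m-1}$ decay of the third, locally uniformly in $\mathbf r$) also makes the differentiation under the integral sign more explicit than the paper does.

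Three minor points:
\begin{itemize}
\item The sentence defining the auxiliary function is garbled: $u$ cannot be both fixed and the variable, and $t$ is undefined. You mean fixed $\mathbf z,\mathbf r$.
\item The second representation is unnecessary for $j=1,2$, since $(y_1,r_1)$ already enters the first one only through $P_{r_1}(y_1-z_1-u)$.
\item The substitution $v=y_1-z_1-u$ that you mention in passing would avoid integration by parts altogether. After it, $P_{r_2}$ depends on $\mathbf y$ only through $y_2-y_1$, which $\nabla_{y_1}+\nabla_{y_2}$ annihilates. So $\Delta_{twist}$ falls entirely on $P_{r_3}(y_1-z_1-v)$ as an ordinary Laplacian.
\end{itemize}
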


\begin{proof}
By definition, the twisted Poisson kernel $Poi_{twist}(\mathbf{x})$ is constructed via the push-forward (or twisted convolution) of the individual Euclidean Poisson kernels $Poi^{(j)}_{r_j}$. Since each $Poi^{(j)}_{r_j}$ is the integral kernel of the Poisson semigroup $e^{-r_j \sqrt{-\Delta_{x_j}}}$, it acts as the fundamental solution to the Laplace equation in the upper half-space $\mathbb{R}^m \times \mathbb{R}_+$. 

Specifically, the spectral theorem dictates that the Poisson semigroup satisfies:
\[
\partial_{r_j}^2 \left( e^{-r_j \sqrt{-\Delta_{x_j}}} f \right) = - \Delta_{x_j} \left( e^{-r_j \sqrt{-\Delta_{x_j}}} f \right).
\]
Rearranging this yields
\[
\left(\Delta_{x_j} + \partial_{r_j}^2\right) \left( e^{-r_j \sqrt{-\Delta_{x_j}}} f \right) = 0.
\]
Since the full extension $U_f(\mathbf{y}, \mathbf{r})$ is generated component-wise, the differential operators in the $j$-th variables commute with the convolution in the other variables. Thus, for $j=1,2$, $U_f$ satisfies the Laplace equation, that is
\[
\Big(\Delta_{x_j}+\partial_{r_j}^2\Big) U_f(\mathbf{y}, \mathbf{r}) = 0;
\]
For $j=3$, the operator is defined as $\Delta_3 := \Delta_{\text{twist}} + \partial_{r_3}^2 $, where $\Delta_{twist} = (\nabla_{x_1} + \nabla_{x_2})^2$. To see that $\Delta_3 U_f = 0$, we examine the Fourier representation. Recall that the twisted Poisson kernel is defined as 
\begin{align*}
    (Poi_{twist})_\mathbf{r}(\mathbf{y})=(Poi^{(1)}_{r_1}\otimes Poi^{(2)}_{r_2})*_3 Poi^{(3)}_{r_3}(\mathbf{y}).
\end{align*}

For $U_f(\mathbf{y},\,\mathbf{r}) = (Poi_{twist})_{\mathbf{r}}*f(\mathbf{y})$, we take the Fourier transform in the spatial variables $\mathbf{y} = (y_1, y_2)$ and obtain
\begin{align}\label{FourierofPoi}
\mathcal{F}_y U_f(\xi_1,\xi_2,\mathbf{r})=e^{-r_1|\xi_1|-r_2|\xi_2|-r_3|\xi_1+\xi_2|}\hat f(\xi_1,\xi_2).
\end{align}
Now $\Delta_{twist}$ corresponds to multiplication by $-|\xi_1 + \xi_2|^2$ in the Fourier domain. Hence,
\[\mathcal{F}_y[\Delta_3 U_f](\xi_1, \xi_2, \mathbf{r}) = (-|\xi_1 + \xi_2|^2 + \partial_{r_3}^2) \mathcal{F}_y[U_f](\xi_1, \xi_2, \mathbf{r}).\]
Since $\partial_{r_3}^2(e^{-r_3 |\xi_1 + \xi_2|}) = |\xi_1 + \xi_2|^2 e^{-r_3 |\xi_1 + \xi_2|}$, the right-hand side vanishes identically. Taking the inverse Fourier transform yields $\Delta_3 U_f =0$. Consequently, $U_f$ is multi-harmonic, i.e., 
$\Delta_j U_f=0$ for each $j=1,2,3$.
\smallskip

To derive the gradient identity, we evaluate the action of the full Laplacian $\Delta_j$ on the square of the harmonic function $U_f(\mathbf{y}, \mathbf{r})^2$.\\
For $j=1,2$, apply the standard Leibniz product rule for the second derivative to both spatial variables and the scale parameter $r_j$:
\begin{align*}
\Delta_j \left( U_f^2 \right) &= \left( \Delta_{x_j} + \partial_{r_j}^2 \right) (U_f^2) 
= \sum_{k} \partial_{y_{j,k}}^2 (U_f^2) + \partial_{r_j}^2 (U_f^2) \\
&= \sum_{k} \left[ 2 U_f (\partial_{y_{j,k}}^2 U_f) + 2 (\partial_{y_{j,k}} U_f)^2 \right] + \left[ 2 U_f (\partial_{r_j}^2 U_f) + 2 (\partial_{r_j} U_f)^2 \right].
\end{align*}
We regroup the terms by factoring out $2U_f$ from the second derivatives and collecting the squared first derivatives:
\begin{align*}
\Delta_j \left( U_f^2 \right) &= 2 U_f \left( \sum_{k} \partial_{y_{j,k}}^2 U_f + \partial_{r_j}^2 U_f \right) + 2 \left( \sum_{k} (\partial_{y_{j,k}} U_f)^2 + (\partial_{r_j} U_f)^2 \right) \\
&= 2 U_f \big( \Delta_{x_j} U_f + \partial_{r_j}^2 U_f \big) + 2 \big( |\nabla_{x_j} U_f|^2 + |\partial_{r_j} U_f|^2 \big) \\
&= 2 U_f \cdot \Delta_j U_f  + 2 |\nabla_j U_f|^2.
\end{align*}
Since $U_f$ is harmonic with respect to the $j$-th block ($\Delta_j U_f = 0$),  the first term vanishes entirely, leaving:
\[
\Delta_j \left( U_f^2 \right) = 2 |\nabla_j U_f|^2, \qquad j=1,2.
\]

For $j=3$, a similar computation gives
\begin{align*}
    \Delta_3 (U_f^2)
    &= 2U_f\Bigg(\sum_k \Big(\partial_{y_1,k} + \partial_{y_2,k}\Big)^2 U_f + \partial_{r_3}^2 U_f\Bigg) + 2\Bigg(\sum_k \big(\partial_{y_1,k} U_f + \partial_{y_2,k} U_f\big)^2 + (\partial_{r_3} U_f)^2\Bigg)\\
    &= 2U_f \cdot \Delta_3 U_f + 2|\nabla_3 U_f|^2\\
    &= 2|\nabla_3 U_f|^2.
\end{align*}
The proof is complete.
\end{proof}

\section{The recursive argument and the twisted good-$\lambda$ inequality}\label{sec:3}
\setcounter{equation}{0}

We now begin the proof of Theorem \ref{thm:good_lambda}.

\subsection{Setup and the separation by the Poisson extension}

We first establish the following auxiliary result.
\begin{lem}\label{lem:Poisson-tube-max}
Fix $\beta>0$. Then there exists a constant $C_0=C_0(\beta,m)>0$ such that for every $h\in L^1_{\mathrm{loc}}(\mathbb{R}^{2m})$, every $\mathbf{x},\mathbf{y}\in \mathbb{R}^{2m}$, and every $\mathbf{r}=(r_1,r_2,r_3)\in \mathbb{R}_+^3$ with
\(
\mathbf{y}\in T(\mathbf{x},\beta\mathbf{r}),
\)
one has
\begin{align}\label{P by M}
\left|(Poi_{twist})_{\mathbf r}*h(\mathbf y)\right|
\le C_0\, M_{tube}(h)(\mathbf x).
\end{align}
\end{lem}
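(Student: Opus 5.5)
The plan is to pull everything back to $\mathbb{R}^{3m}$ via $\pi$, where the kernel becomes an honest tensor product of Poisson kernels. Then we decompose each factor dyadically and sum averages over projected product balls. Those projected balls are comparable to tubes by Proposition \ref{prop:tube-pi}.

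\textbf{Step 1: write the kernel as a push-forward.} By definition,
\[
(Poi_{twist})_{\mathbf r}*h(\mathbf y)=\int_{\mathbb{R}^{3m}} Poi^{(1)}_{r_1}(u_1)Poi^{(2)}_{r_2}(u_2)Poi^{(3)}_{r_3}(u_3)\, h(\mathbf y-\pi(u_1,u_2,u_3))\,du_1du_2du_3 .
\]

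\textbf{Step 2: dyadic decomposition of each factor.} Each Poisson kernel is radially decreasing and integrable, so
\[
Poi^{(j)}_{r_j}\le C\sum_{k_j\ge0}2^{-k_j}\,\frac{\mathbf 1_{B(0,2^{k_j}r_j)}}{|B(0,2^{k_j}r_j)|}.
\]
Concretely, on the annulus $|u|\sim 2^{k}r$ we have $Poi_r(u)\lesssim r^{-m}2^{-k(m+1)}$. Inserting this bound into Step 1 gives
\[
|(Poi_{twist})_{\mathbf r}*h(\mathbf y)|\lesssim\sum_{\mathbf k\in\mathbb N^3}2^{-k_1-k_2-k_3}\frac{1}{|\tilde B(0,2^{\mathbf k}\mathbf r)|}\int_{\tilde B(0,2^{\mathbf k}\mathbf r)}|h(\mathbf y-\pi u)|\,du .
\]

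\textbf{Step 3: push forward each average.} Disintegrate the product measure along the fibres of $\pi$. The fibre of $\pi$ over a point $\mathbf z$ is the line $\{(z_1-t,z_2-t,t)\}$. For each fixed $\mathbf z$, the set of $t$ in this fibre lying in $\tilde B(0,\mathbf s)$ has measure at most $\min(s_1,s_2,s_3)^m$ up to constants. Hence the push-forward of normalized Lebesgue measure on $\tilde B(0,\mathbf s)$ has density bounded by
\[
\frac{C\min(s_1,s_2,s_3)^m}{s_1^ms_2^ms_3^m}\,\mathbf 1_{\pi\tilde B(0,\mathbf s)} .
\]
By \eqref{eq:tube-volume}, $\frac{\min(s_1,s_2,s_3)^m}{s_1^ms_2^ms_3^m}\simeq |T(0,\mathbf s)|^{-1}$. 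Together with $\pi\tilde B(0,\mathbf s)\subset T(0,2\mathbf s)$ from Proposition \ref{prop:tube-pi}, each average is therefore bounded by
\[
\frac{C}{|T(\mathbf y,2\mathbf s)|}\int_{T(\mathbf y,2\mathbf s)}|h| .
\]
I expect this fibre-measure computation, verifying the volume match in all three regimes, to be the main obstacle, although it should be routine.

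\textbf{Step 4: recentre at $\mathbf x$.} Since $\mathbf y\in T(\mathbf x,\beta\mathbf r)$ and $\mathbf s=2^{\mathbf k}\mathbf r\ge\mathbf r$, we need a quasi-triangle property for tubes:
\[
T(\mathbf y,2\mathbf s)\subset T\big(\mathbf x,C(1+\beta)\mathbf s\big).
\]
The plan is to prove this by going back to $\mathbb{R}^{3m}$. Using Proposition \ref{prop:tube-pi} twice, $\mathbf y-\mathbf x\in\pi\tilde B(0,2\beta\mathbf r)$. Sums of product balls are product balls with the radii added, so
\[
T(\mathbf y,2\mathbf s)\subset \mathbf x+\pi\tilde B\big(0,(4+2\beta)\mathbf s\big)\subset T\big(\mathbf x,(8+4\beta)\mathbf s\big).
\]
The tube volumes are comparable with constants depending on $\beta$ and $m$. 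Consequently each term in the sum is bounded by $C_\beta M_{tube}(h)(\mathbf x)$.

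\textbf{Step 5: sum over $\mathbf k$.} Summing $\sum_{\mathbf k}2^{-k_1-k_2-k_3}<\infty$ gives \eqref{P by M}.
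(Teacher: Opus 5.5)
Your proposal is correct and follows essentially the same route as the paper. It uses the same dyadic domination of each Poisson factor, the same fibre-measure bound $\min_j|B(0,s_j)|/\prod_j|B(0,s_j)|\lesssim V_{\mathbf s}^{-1}$, and the same recentring by adding product balls in $\mathbb{R}^{3m}$ via Proposition \ref{prop:tube-pi}; the only difference is that the paper embeds $\mathbf y-\pi(\tilde B(\mathbf 0,\mathbf s))$ directly into $T(\mathbf x,2(2\beta+1)\mathbf s)$ instead of passing through $T(\mathbf y,2\mathbf s)$, and the fibre of $\pi$ is an $m$-dimensional affine subspace rather than a line, which does not affect your count.
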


\begin{proof}
Since $(Poi_{twist})_{\mathbf r}$ is nonnegative, it is enough to prove \eqref{P by M} for $|h|$.
We write
\begin{align}
(Poi_{twist})_{\mathbf r}*|h|(\mathbf y)
&=
\int_{\mathbb R^{2m}} |h(\mathbf z)|
\int_{\mathbb R^m}
Poi^{(1)}_{r_1}(y_1-z_1-u)\,
Poi^{(2)}_{r_2}(y_2-z_2-u)\,
Poi^{(3)}_{r_3}(u)\,du\,d\mathbf z.
\label{eq:twisted-conv-start}
\end{align}

We first record a dyadic domination for the Euclidean Poisson kernel.

\medskip
\noindent
\textit{Claim:} There exists a constant $C=C(m)>0$ such that for every $a>0$ and every $v\in \mathbb R^m$,
\begin{equation}\label{eq:1d-poisson-dyadic}
Poi_a(v)\le
C\sum_{\nu=0}^\infty
2^{-\nu}\,
\frac{\chi_{B(0,2^\nu a)}(v)}{|B(0,2^\nu a)|}.
\end{equation}

\noindent
\textit{Proof of the claim.}
Recall that
\[
Poi_a(v)=\frac{c_m a}{(a^2+|v|^2)^{\frac{m+1}{2}}}.
\]
On one hand, if $|v|<a$, then
\[
Poi_a(v)\le C a^{-m}\simeq \frac{C}{|B(0,a)|},
\]
which is the $\nu=0$ term on the right-hand side of \eqref{eq:1d-poisson-dyadic}.

Now let $\nu\ge 1$ and assume
\(
2^{\nu-1}a\le |v|<2^\nu a.
\)
Then
\[
Poi_a(v)
\le C \frac{a}{|v|^{m+1}}
\le C \frac{a}{(2^{\nu-1}a)^{m+1}}
\le C\,2^{-\nu(m+1)}a^{-m}.
\]
On the other hand,
\[
2^{-\nu}\frac{1}{|B(0,2^\nu a)|}
\simeq 2^{-\nu}\frac{1}{(2^\nu a)^m}
=2^{-\nu(m+1)}a^{-m}.
\]
Since $\chi_{B(0,2^\nu a)}(v)=1$, this gives \eqref{eq:1d-poisson-dyadic}. The claim is proved.

\medskip

We apply \eqref{eq:1d-poisson-dyadic} to each of the three Poisson factors in \eqref{eq:twisted-conv-start}. This yields
\[
(Poi_{twist})_{\mathbf r}*|h|(\mathbf y)
\le
C\sum_{\nu_1,\nu_2,\nu_3\ge 0}
2^{-(\nu_1+\nu_2+\nu_3)}\, I_{\nu_1,\nu_2,\nu_3},
\]
where, with
\(
s_j:=2^{\nu_j}r_j
\ \text{and}\ 
B_j:=B(0,s_j)\subset \mathbb R^m,
\)
we set
\begin{align*}
I_{\nu_1,\nu_2,\nu_3}
:=
\frac{1}{|B_1||B_2||B_3|}
\int_{\mathbb R^{2m}}
\int_{\mathbb R^m}
|h(\mathbf z)|
\chi_{B_1}(y_1-z_1-u)\,
\chi_{B_2}(y_2-z_2-u)\,
\chi_{B_3}(u)\,
du\,d\mathbf z.
\end{align*}

We now estimate the term $I_{\nu_1,\nu_2,\nu_3}$.
Fix $\mathbf z\in \mathbb R^{2m}$. If
\(
\chi_{B_1}(y_1-z_1-u)\,
\chi_{B_2}(y_2-z_2-u)\,
\chi_{B_3}(u)\neq 0
\)
for some $u\in \mathbb R^m$, then
\(
y_1-z_1-u\in B_1,\ 
y_2-z_2-u\in B_2,\)
and \(
u\in B_3.
\)
Define
\[
\zeta_1:=y_1-z_1-u,\quad
\zeta_2:=y_2-z_2-u,\quad\text{and}\quad
\zeta_3:=u.
\]
Then $(\zeta_1,\zeta_2,\zeta_3)\in \widetilde B(\mathbf 0,\mathbf s)$, where
\(
\mathbf s:=(s_1,s_2,s_3),
\)
and
\(
\mathbf y-\mathbf z
=
(\zeta_1+\zeta_3,\zeta_2+\zeta_3)
=
\pi(\zeta_1,\zeta_2,\zeta_3).
\)
Therefore
\[
\mathbf z\in \mathbf y-\pi\big(\widetilde B(\mathbf 0,\mathbf s)\big).
\]
Moreover, for fixed $\mathbf z$, the set of all such $u$ has measure at most
\(
\min\{|B_1|,|B_2|,|B_3|\}.
\)
Hence
\begin{align}
I_{\nu_1,\nu_2,\nu_3}
&\le
\frac{\min\{|B_1|,|B_2|,|B_3|\}}{|B_1||B_2||B_3|}
\int_{\mathbf y-\pi(\widetilde B(\mathbf 0,\mathbf s))}
|h(\mathbf z)|\,d\mathbf z.
\label{eq:Ik-first}
\end{align}

Next, since $|B_j|\simeq s_j^m$, we have
\[
\frac{\min\{|B_1|,|B_2|,|B_3|\}}{|B_1||B_2||B_3|}
=
\frac{1}{\max\{|B_1||B_2|,\ |B_1||B_3|,\ |B_2||B_3|\}}
\lesssim \frac{1}{V_{\mathbf s}}.
\]
Therefore \eqref{eq:Ik-first} implies
\begin{align}
I_{\nu_1,\nu_2,\nu_3}
\lesssim
\frac{1}{V_{\mathbf s}}
\int_{\mathbf y-\pi(\widetilde B(\mathbf 0,\mathbf s))}
|h(\mathbf z)|\,d\mathbf z.
\label{eq:Ik-second}
\end{align}

Since $\mathbf y\in T(\mathbf x,\beta\mathbf r)$, Proposition \ref{prop:tube-pi} with $2\beta\mathbf r$ in place of $\mathbf r$ gives
\(
T(\mathbf x,\beta\mathbf r)\subset \mathbf x+\pi\big(\widetilde B(\mathbf 0,2\beta\mathbf r)\big).
\)
Hence there exists
\(
\eta=(\eta_1,\eta_2,\eta_3)\in \widetilde B(\mathbf 0,2\beta\mathbf r)
\)
such that
\(
\mathbf y-\mathbf x=\pi(\eta).
\)\\
Now let
\(
\mathbf z\in \mathbf y-\pi(\widetilde B(\mathbf 0,\mathbf s)).
\)
Then there exists
\(
\zeta=(\zeta_1,\zeta_2,\zeta_3)\in \widetilde B(\mathbf 0,\mathbf s)
\)
such that
\(
\mathbf y-\mathbf z=\pi(\zeta).
\)
Subtracting the two identities, we obtain
\[
\mathbf z-\mathbf x
=
(\mathbf y-\mathbf x)-(\mathbf y-\mathbf z)
=
\pi(\eta)-\pi(\zeta)
=
\pi(\eta-\zeta).
\]
Since $s_j=2^{\nu_j}r_j\ge r_j$, we have
\(
|\eta_j-\zeta_j|
\le |\eta_j|+|\zeta_j|
\le 2\beta r_j+s_j
\le (2\beta+1)s_j
\ (j=1,2,3).
\)
Thus
\[
\eta-\zeta\in \widetilde B(\mathbf 0,(2\beta+1)\mathbf s),
\]
and therefore
\[
\mathbf z\in \mathbf x+\pi\big(\widetilde B(\mathbf 0,(2\beta+1)\mathbf s)\big).
\]
Applying Proposition \ref{prop:tube-pi} once more,
\(
\mathbf x+\pi\big(\widetilde B(\mathbf 0,(2\beta+1)\mathbf s)\big)
\subset
T\big(\mathbf x,\,2(2\beta+1)\mathbf s\big).
\)
Hence
\(
\mathbf y-\pi(\widetilde B(\mathbf 0,\mathbf s))
\subset
T\big(\mathbf x,\,2(2\beta+1)\mathbf s\big).
\)
Substituting this into \eqref{eq:Ik-second}, we get
\[
I_{\nu_1,\nu_2,\nu_3}
\lesssim
\frac{1}{V_{\mathbf s}}
\int_{T(\mathbf x,\,2(2\beta+1)\mathbf s)}
|h(\mathbf z)|\,d\mathbf z.
\]

Since the tube volume is homogeneous of degree $2m$ under scalar dilations,
\[
V_{\,2(2\beta+1)\mathbf s}
=
\big(2(2\beta+1)\big)^{2m}V_{\mathbf s}.
\]
Therefore
\begin{align*}
I_{\nu_1,\nu_2,\nu_3}
&\lesssim
\frac{V_{\,2(2\beta+1)\mathbf s}}{V_{\mathbf s}}
\cdot
\frac{1}{V_{\,2(2\beta+1)\mathbf s}}
\int_{T(\mathbf x,\,2(2\beta+1)\mathbf s)}
|h(\mathbf z)|\,d\mathbf z \lesssim_\beta
M_{tube}(h)(\mathbf x).
\end{align*}

Returning to the dyadic sum, we conclude that
\begin{align*}
(Poi_{twist})_{\mathbf r}*|h|(\mathbf y)
&\le
C\sum_{\nu_1,\nu_2,\nu_3\ge 0}
2^{-(\nu_1+\nu_2+\nu_3)}
I_{\nu_1,\nu_2,\nu_3} \\
&\lesssim_\beta
\left(\sum_{\nu_1,\nu_2,\nu_3\ge 0}2^{-(\nu_1+\nu_2+\nu_3)}\right)
M_{tube}(h)(\mathbf x) \\
&\lesssim_\beta M_{tube}(h)(\mathbf x).
\end{align*}
This proves the lemma.
\end{proof}

Let $E_\beta(\lambda) = \{\mathbf{x} \in \mathbb{R}^{2m}: U_f^*(\mathbf{x}) \le \lambda\}$ and define the indicator function $g(\mathbf{x}) = \chi_{E_\beta(\lambda)}(\mathbf{x})$. We define the proxy good set
\[ A_\beta(\lambda) := \left\{ \mathbf{x} \in \mathbb{R}^{2m} : M_{tube}(\chi_{E_\beta(\lambda)^c})(\mathbf{x}) \le \frac{1}{10C_0} \right\}. \]
By the $L^2$ boundedness of $M_{tube}$, $|A_\beta(\lambda)^c| \le C |E_\beta(\lambda)^c|$. By Chebyshev's inequality, it suffices to estimate the integral of the area function squared over $A_\beta(\lambda)$.

\begin{lem}
\label{lem:strict_separation}
Let $E_\beta(\lambda) = \{\mathbf{x} \in \mathbb{R}^{2m}: U_f^*(\mathbf{x}) \le \lambda\}$ be the good set, and let $g(\mathbf{x}) = \chi_{E_\beta(\lambda)}(\mathbf{x})$. 

Define the proxy set $A_\beta(\lambda) := \left\{ \mathbf{x} \in \mathbb{R}^{2m} : M_{tube}(\chi_{E_\beta(\lambda)^c})(\mathbf{x}) \le \frac{1}{10C_0} \right\}$. Let $W_\beta$ and $\widetilde{W}_\beta$ be the Carleson tents given by
\[ W_\beta = \bigcup_{\mathbf{x} \in A_\beta(\lambda)} \Gamma^\beta(\mathbf{x}), \quad\text{and}\quad \widetilde{W}_\beta = \bigcup_{\mathbf{x} \in E_\beta(\lambda)} \Gamma^\beta(\mathbf{x}). \]
Then the twisted Poisson extension $U_g(\mathbf{y}, \mathbf{r}) = (Poi_{twist})_{\mathbf{r}} * g(\mathbf{y})$ satisfies
\begin{align}\label{inner}
    U_g(\mathbf{y}, \mathbf{r}) > \frac{9}{10},\quad (\mathbf{y}, \mathbf{r}) \in W_\beta
\end{align}
and
\begin{align}\label{outer}
   \exists\ C_1=C_1(\beta) \in (0, 9/10)\ {\rm such\ that\ } \ U_g(\mathbf{y}, \mathbf{r}) \le C_1,\quad \forall(\mathbf{y}, \mathbf{r}) \notin \widetilde{W}_\beta,
\end{align}
provided that $\beta$ is chosen large enough.
\end{lem}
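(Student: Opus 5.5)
The plan is to use linearity: since $(Poi_{twist})_{\mathbf r}$ is nonnegative with total integral one (it is the push-forward of a product of probability densities), we have $U_g = 1 - U_{\chi_{E_\beta(\lambda)^c}}$. Both statements then become bounds on $U_{\chi_{E^c}}(\mathbf y,\mathbf r)$, where we abbreviate $E=E_\beta(\lambda)$.

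For \eqref{inner}, take $(\mathbf y,\mathbf r)\in W_\beta$. Then there is $\mathbf x\in A_\beta(\lambda)$ with $\mathbf y\in T(\mathbf x,\beta\mathbf r)$. Lemma \ref{lem:Poisson-tube-max}, applied with $h=\chi_{E^c}$, gives
\[
U_{\chi_{E^c}}(\mathbf y,\mathbf r)\le C_0 M_{tube}(\chi_{E^c})(\mathbf x)\le \tfrac1{10}.
\]
Hence $U_g\ge 9/10$. To obtain the strict inequality, I would either shrink the threshold in the definition of $A_\beta(\lambda)$ slightly, or note that Lemma \ref{lem:Poisson-tube-max} holds with a marginally smaller constant, since its dyadic estimate is not sharp. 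This part is routine.

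For \eqref{outer}, take $(\mathbf y,\mathbf r)\notin\widetilde W_\beta$. By definition, every $\mathbf x\in T(\mathbf y,\beta\mathbf r)$ (tubes are symmetric, so $\mathbf y\in T(\mathbf x,\beta\mathbf r)$ iff $\mathbf x\in T(\mathbf y,\beta\mathbf r)$) lies outside $E$. Thus $T(\mathbf y,\beta\mathbf r)\subset E^c$, and
\[
U_{\chi_{E^c}}(\mathbf y,\mathbf r)\ge \int_{T(\mathbf 0,\beta\mathbf r)}(Poi_{twist})_{\mathbf r}(\mathbf z)\,d\mathbf z=\int_{T(\mathbf 0,\beta\mathbf 1)}Poi_{twist}(\mathbf z')\,d\mathbf z'
\]
after rescaling. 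The key step is to bound this mass from below uniformly in $\mathbf r$. The difficulty is that the tube shape depends on the relative sizes of $r_1,r_2,r_3$, so I would work in $\mathbb R^{3m}$ instead. By Proposition \ref{prop:tube-pi}, $\pi(\tilde B(\mathbf 0,\beta\mathbf r/2\cdot 2))$ and $T(\mathbf 0,\beta\mathbf r)$ are comparable; more precisely, $\pi(\tilde B(\mathbf 0,\beta\mathbf r/2))\subset T(\mathbf 0,\beta\mathbf r)$. Since $(Poi_{twist})_{\mathbf r}=\pi_*(Poi_{r_1}\otimes Poi_{r_2}\otimes Poi_{r_3})$, we get
\[
\int_{T(\mathbf 0,\beta\mathbf r)}(Poi_{twist})_{\mathbf r}\ge \prod_{j=1}^3\int_{B(0,\beta r_j/2)}Poi_{r_j}=\Big(\int_{B(0,\beta/2)}Poi\Big)^3=:p(\beta),
\]
which is independent of $\mathbf r$ and tends to $1$ as $\beta\to\infty$. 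Therefore $U_g\le 1-p(\beta)=:C_1$.

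The final step is to choose $\beta$ large enough that $1-p(\beta)<9/10$, and indeed as small as later needed. Since $p(\beta)$ is increasing with limit $1$, this holds for $\beta$ beyond an absolute threshold. We must also keep $C_0=C_0(\beta)$ from Lemma \ref{lem:Poisson-tube-max} fixed with this $\beta$; this causes no circularity, because $A_\beta(\lambda)$ is defined after $\beta$, and hence $C_0$, is fixed. The only genuine obstacle is the uniform-in-$\mathbf r$ lower bound for the mass of the twisted Poisson kernel on twisted tubes, and lifting to $\mathbb R^{3m}$ via the push-forward handles it.
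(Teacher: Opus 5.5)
Your proof is correct and follows essentially the paper's route. The inner bound is the same: $U_g=1-U_{\chi_{E^c}}$ plus Lemma \ref{lem:Poisson-tube-max}, where the paper likewise only obtains $\ge 9/10$, which suffices since $\varphi\equiv 1$ on $[9/10,\infty)$. For the outer bound the paper lifts to $\mathbb{R}^{3m}$ and bounds $U_g$ by the lifted Poisson mass outside the product ball, $\sum_j\int_{|v_j|\ge\beta r_j}Poi^{(j)}_{r_j}\lesssim 1/\beta$. That is just the complementary form of your lower bound $\prod_j\int_{B(0,\beta r_j/2)}Poi_{r_j}$ for $U_{\chi_{E^c}}$. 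Your inclusion $\pi(\tilde B(\mathbf 0,\beta\mathbf r/2))\subset T(\mathbf 0,\beta\mathbf r)$ is actually more accurate than the paper's identification of the tube with the projected product ball, which Proposition \ref{prop:tube-pi} gives only up to a factor $2$.

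One slip should be deleted. The displayed equality $\int_{T(\mathbf 0,\beta\mathbf r)}(Poi_{twist})_{\mathbf r}=\int_{T(\mathbf 0,\beta\mathbf 1)}Poi_{twist}$ is false for general $\mathbf r$, because anisotropic dilations do not commute with $\pi$. You never use it, though: the lifted product-ball estimate already gives the $\mathbf r$-independent bound directly.
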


\begin{proof}
We first show (\ref{inner}).
Remarking that $g(\mathbf{y}) = 1 - \chi_{E_\beta(\lambda)^c}(\mathbf{y})$ and the twisted Poisson kernel integrates to 1, we can write the extension as:
\begin{align*}
U_g(\mathbf{y}, \mathbf{r}) &= (Poi_{twist})_{\mathbf{r}} * \left( 1 - \chi_{E_\beta(\lambda)^c} \right)(\mathbf{y}) 
= 1 - (Poi_{twist})_{\mathbf{r}} * \chi_{E_\beta(\lambda)^c}(\mathbf{y}).
\end{align*}
Let $(\mathbf{y}, \mathbf{r}) \in W_\beta$. By definition of $W_\beta$, there exists a base point $\mathbf{x} \in A_\beta(\lambda)$ such that $(\mathbf{y}, \mathbf{r}) \in \Gamma^\beta(\mathbf{x})$. 
Thus, we further have $\mathbf y\in T(\mathbf x,\beta\mathbf r)$. Applying Lemma \ref{lem:Poisson-tube-max} with $h=\chi_{E_\beta(\lambda)^c}$, we obtain
\[
(Poi_{twist})_{\mathbf r} * \chi_{E_\beta(\lambda)^c}(\mathbf y)
\le C_0\, M_{tube}\!\left(\chi_{E_\beta(\lambda)^c}\right)(\mathbf x).
\]

Since $\mathbf{x}$ belongs to $A_\beta(\lambda)$, we know that $M_{tube}\left(\chi_{E_\beta(\lambda)^c}\right)(\mathbf{x}) \le \frac{1}{10C_0}$. Substituting this bound yields 
\[
(Poi_{twist})_{\mathbf{r}} * \chi_{E_\beta(\lambda)^c}(\mathbf{y}) \le C_0 \left( \frac{1}{10C_0} \right) = \frac{1}{10},
\]
which gives (\ref{inner}).
\smallskip

Next, we verify (\ref{outer}). We will bound $U_g(\mathbf{y}, \mathbf{r})$ for all  $(\mathbf{y}, \mathbf{r})\in \big(\widetilde{W}_\beta\big)^c$.

We evaluate the Poisson integral of the indicator $g = \chi_{E_\beta(\lambda)}$. By explicitly writing the twisted Poisson kernel via its push-forward fiber integral, we lift the integration to $\mathbb{R}^{3m}$ and obtain
\begin{align*}
U_g(\mathbf{y}, \mathbf{r}) &= \int_{\mathbb{R}^{2m}} g(\mathbf{z}) (Poi_{twist})_{\mathbf{r}}(\mathbf{y} - \mathbf{z}) \, d\mathbf{z} \\
&= \int_{E_\beta(\lambda)} \left[ \int_{\mathbb{R}^m} Poi^{(1)}_{r_1}(y_1 - z_1 - u) Poi^{(2)}_{r_2}(y_2 - z_2 - u) Poi^{(3)}_{r_3}(u) \, du \right] d\mathbf{z}.
\end{align*}

Since $g(\mathbf{z}) \le 1$, we can bound the integral by extending the spatial integration over $d\mathbf{z}$ to the region where the lifted coordinates exceed the aperture. We perform a global change of variables in $\mathbb{R}^{3m}$:
$$v_1 = y_1 - z_1 - u, \quad v_2 = y_2 - z_2 - u, \quad\text{and}\quad v_3 = u.$$
For a fixed $\mathbf{y}$, the map $(z_1, z_2, u) \mapsto (v_1, v_2, v_3)$ is an affine transformation with a Jacobian determinant $1$. Thus, $dz_1 dz_2 du = dv_1 dv_2 dv_3$.

We proceed to translate the geometric condition $(\mathbf{y}, \mathbf{r}) \notin \widetilde{W}_\beta$ into boundaries for our new variables $v_1, v_2, v_3$. If $(\mathbf{y}, \mathbf{r}) \notin \widetilde{W}_\beta$, then for every $\mathbf{z} \in E_\beta(\lambda)$, the point $\mathbf{y}$ lies outside the twisted tube $T(\mathbf{z}, \beta\mathbf{r})$. Recall that this twisted tube is the projection of the product ball $\tilde{B}(\mathbf{z}_{lifted}, \beta\mathbf{r})$ along the fiber variable $u$. For $\mathbf{y}$ to fall outside this projection, the lifted coordinates cannot simultaneously belong to the product ball for any choice of $u \in \mathbb{R}^m$. In other words, the following three conditions cannot hold simultaneously:
$$|y_1 - z_1 - u| < \beta r_1, \quad |y_2 - z_2 - u| < \beta r_2 \quad \text{and} \quad |u| < \beta r_3.$$
That is, for every $u$, at least one of the reverse inequalities holds:
$$|y_1 - z_1 - u| \ge \beta r_1, \quad |y_2 - z_2 - u| \ge \beta r_2 \quad \text{or} \quad |u| \ge \beta r_3.$$
Substituting our new variables $v_1 = y_1 - z_1 - u$, $v_2 = y_2 - z_2 - u$, and $v_3 = u$ simplifies this logical statement to:
$$|v_1| \ge \beta r_1, \quad |v_2| \ge \beta r_2, \quad \text{or} \quad |v_3| \ge \beta r_3.$$
Expressed in terms of sets, every point $(v_1, v_2, v_3)$ in our integration domain belongs to the union of three regions $\Omega_1 \cup \Omega_2 \cup \Omega_3$, where:
$$\Omega_1 = \{|v_1| \ge \beta r_1\}, \quad \Omega_2 = \{|v_2| \ge \beta r_2\} \quad \text{and} \quad \Omega_3 = \{|v_3| \ge \beta r_3\}.$$
Although these sets technically overlap rather than being disjoint, the Poisson kernels are positive. This ensures that bounding the integrals over their union by the sum of the integrals over the individual sets is  valid via subadditivity, which justifies the decomposition in the subsequent step.

This implies
$$\begin{aligned}
U_g(\mathbf{y}, \mathbf{r}) &\le \iiint_{\{|v_1| \ge \beta r_1\} \cup \{|v_2| \ge \beta r_2\} \cup \{|v_3| \ge \beta r_3\}} Poi^{(1)}_{r_1}(v_1) Poi^{(2)}_{r_2}(v_2) Poi^{(3)}_{r_3}(v_3) \, dv_1 dv_2 dv_3 \\
&\le \sum_{j=1}^3 \int_{|v_j| \ge \beta r_j} Poi^{(j)}_{r_j}(v_j) \left( \iint_{\mathbb{R}^{2m}} Poi^{(k)}_{r_k}(v_k) Poi^{(\ell)}_{r_\ell}(v_\ell) \, dv_k dv_\ell \right) dv_j.
\end{aligned}$$

To establish the upper bound, we observe that the indicator function satisfies $g(\mathbf{z}) \le 1$. Since all Poisson kernels are positive, we can drop $g(\mathbf{z})$ and expand the domain of integration from the specific image of $E_\beta(\lambda)$ to the entire union $\Omega_1 \cup \Omega_2 \cup \Omega_3$. Utilizing the subadditivity of the integral for positive functions, the integral over this union is majorized by the sum of the integrals over the individual sets $\Omega_j$. 

Evaluating the first term over $\Omega_1 = \{|v_1| \ge \beta r_1\}$, we note that the domain restricts only the variable $v_1$, placing absolutely no conditions on $v_2$ or $v_3$. Consequently, $v_2$ and $v_3$ are integrated over their respective full spaces $\mathbb{R}^m$. Since the variables $v_1, v_2, v_3$ are completely separable, applying Fubini's Theorem separates the triple integral into a product of three independent integrals:
\begin{align*}
&\iiint_{\Omega_1} Poi^{(1)}_{r_1}(v_1) Poi^{(2)}_{r_2}(v_2) Poi^{(3)}_{r_3}(v_3) \, dv_1 dv_2 dv_3 \\
&= \left( \int_{|v_1| \ge \beta r_1} Poi^{(1)}_{r_1}(v_1) \, dv_1 \right) \left( \int_{\mathbb{R}^m} Poi^{(2)}_{r_2}(v_2) \, dv_2 \right) \left( \int_{\mathbb{R}^m} Poi^{(3)}_{r_3}(v_3) \, dv_3 \right).
\end{align*}
This reasoning applies identically to the remaining integrals over $\Omega_2$ and $\Omega_3$. Condensing these results, the overall bound reduces to a sum $\sum_{j=1}^3$ where, for each $j$, the integration of $v_j$ is restricted to its tail region $\{|v_j| \ge \beta r_j\}$, while the remaining variables $v_k$ and $v_\ell$ (for $k, \ell \neq j$) are integrated over the entire space $\mathbb{R}^{2m}$.

Since $\int_{\mathbb{R}^m} Poi(v) dv = 1$,  we have
\begin{align*}
U_g(\mathbf{y}, \mathbf{r}) &\le \sum_{j=1}^3 \int_{|v_j| \ge \beta r_j} Poi^{(j)}_{r_j}(v_j) \, dv_j.
\end{align*}

Recall the explicit formula for the Euclidean Poisson kernel: $Poi^{(j)}_{r_j}(v) = \frac{c_m r_j}{(r_j^2 + |v|^2)^{(m+1)/2}}$. We evaluate the tail integral for each $j$ by performing the scaling substitution $v = r_j w$, which gives $dv = r_j^m dw$:
\begin{align*}
\int_{|v| \ge \beta r_j} \frac{c_m r_j}{(r_j^2 + |v|^2)^{\frac{m+1}{2}}} \, dv &= \int_{|w| \ge \beta} \frac{c_m r_j}{(r_j^2 + r_j^2 |w|^2)^{\frac{m+1}{2}}} r_j^m \, dw 
= \int_{|w| \ge \beta} \frac{c_m}{(1 + |w|^2)^{\frac{m+1}{2}}} \, dw.
\end{align*}
For large $\beta$, integrating in spherical coordinates (where the surface area grows as $|w|^{m-1}$) yields a decay rate of:
\[
\int_{|w| \ge \beta} |w|^{-(m+1)} |w|^{m-1} \, d|w| = \int_\beta^\infty w^{-2} \, dw = O(\beta^{-1}).
\]
Thus, for each $j=1,2,3$, the tail integral is bounded by $C/\beta$ for some geometric constant $C$. Summing over the three regions gives that
\[
U_g(\mathbf{y}, \mathbf{r}) \le \frac{3C}{\beta}.
\]
By choosing $\beta$ sufficiently large such that $\frac{3C}{\beta} =: C_1 < \frac{9}{10}$, we guarantee that $U_g(\mathbf{y}, \mathbf{r}) \le C_1$ everywhere outside the enlarged tent $\widetilde{W}_\beta$.
\end{proof}

Choose a smooth cut-off $\varphi(t)$ such that $\varphi(t)=1$ for $t \ge 9/10$ and $\varphi(t)=0$ for $t \le C_1$. 
To shorten the notation, let $v_{2,3}(\mathbf{y}, \mathbf{r}) =  \nabla_2 \nabla_3 U_f(\mathbf{y}, \mathbf{r})$. 

We define the primary integral of the area function over the proxy good set $A_\beta(\lambda)$. Recalling the definition of the tent $W_\beta = \bigcup_{\mathbf{x} \in A_\beta(\lambda)} \Gamma^\beta(\mathbf{x})$, we can bound the integral over $A_\beta(\lambda)$ by the integral over the full tent $W_\beta$:
\begin{align*}
\int_{A_\beta(\lambda)} S_{twist}(f)^2(\mathbf{x}) d\mathbf{x} &\le \int_{W_\beta} |\nabla_1 v_{2,3}(\mathbf{y}, \mathbf{r})|^2 r_1 r_2 r_3 \, d\mathbf{y} d\mathbf{r}.
\end{align*}

Since the Poisson extension $U_g(\mathbf{y}, \mathbf{r}) > 9/10$ on the tent $W_\beta$ (from Lemma \ref{lem:strict_separation}), our chosen cut-off $\varphi(U_g)$ is identically 1 on $W_\beta$. Thus, we can introduce the auxiliary term $\varphi(U_g)^2$ into the integrand without decreasing its value over the tent. Let $\mathcal{I}_1$ be defined as the integral over the entire multi-parameter upper half-space:
\begin{align}\label{eq:integral_A}
\mathcal{I}_1 := \int_{\mathbb{R}^{2m} \times \mathbb{R}_+^3} |\nabla_1 v_{2,3}|^2 |\varphi(U_g)|^2 r_1 r_2 r_3 \, d\mathbf{y} d\mathbf{r}.
\end{align}
Since $\varphi(U_g)^2 = 1$ on $W_\beta$ and is non-negative everywhere else, it holds that $\int_{W_\beta} |\nabla_1 v_{2,3}|^2 \le \mathcal{I}_1$, which justifies extending the integration domain to $\mathbb{R}^{2m} \times \mathbb{R}_+^3$.

\subsection{First iteration: passing $\nabla_1$ to $g$.}

We evaluate the integral $\mathcal{I}_1$.
Since both $v_{2,3}$ and $U_g$ are harmonic with respect to the standard Euclidean Laplacian $\Delta_1 = \Delta_{y_1} + \partial_{r_1}^2$, we apply the product rule $\Delta_1(AB) = A \Delta_1(B) + B \Delta_1(A) + 2\nabla_1(A)\cdot\nabla_1(B)$ where $A = |v_{2,3}|^2$ and $B = \varphi(U_g)^2$.

Substituting the harmonic property $\Delta_1(u^2) = 2|\nabla_1 u|^2$, we evaluate the individual pieces:
\begin{enumerate}
\item $\Delta_1(|v_{2,3}|^2) = 2|\nabla_1 v_{2,3}|^2$ (since $\Delta_1 v_{2,3} = 0$).
\item By the chain rule, $\nabla_1(\varphi(U_g)^2) = 2\varphi(U_g)\varphi'(U_g)\nabla_1 U_g$. Applying the divergence operator yields
\begin{align*}
    \Delta_1(\varphi(U_g)^2) &= 2\big(\varphi'(U_g)^2 + \varphi(U_g)\varphi''(U_g)\big)|\nabla_1 U_g|^2 + 2\varphi(U_g)\varphi'(U_g)\Delta_1 U_g\\
    &= 2\big(\varphi'(U_g)^2 + \varphi(U_g)\varphi''(U_g)\big)|\nabla_1 U_g|^2,
\end{align*} 
where the last equality follows from $\Delta_1 U_g = 0$.
\item The cross-term evaluates to 
\begin{align*}
2\nabla_1(|v_{2,3}|^2) \cdot \nabla_1(\varphi(U_g)^2) &= 2(2v_{2,3}\nabla_1 v_{2,3}) \cdot (2\varphi(U_g)\varphi'(U_g)\nabla_1 U_g) \\
&= 8 v_{2,3} \nabla_1 v_{2,3} \cdot \varphi(U_g) \varphi'(U_g) \nabla_1 U_g.
\end{align*}
\end{enumerate}
Substituting these back into $\Delta_1(|v_{2,3}|^2 \varphi(U_g)^2)$ and then algebraically rearranging to isolate the term $|\nabla_1 v_{2,3}|^2 |\varphi(U_g)|^2$, we obtain:
\begin{align}\label{I1 f1234}
|\nabla_1 v_{2,3}|^2 |\varphi(U_g)|^2 &= \frac{1}{2}\Delta_1 \left( |v_{2,3}|^2 |\varphi(U_g)|^2 \right) 
 - 4 v_{2,3} \nabla_1 v_{2,3} \cdot \varphi(U_g) \varphi'(U_g) \nabla_1 U_g \\
&\quad - |v_{2,3}|^2 |\varphi'(U_g)|^2 |\nabla_1 U_g|^2 - |v_{2,3}|^2 \varphi(U_g) \varphi''(U_g) |\nabla_1 U_g|^2 \notag\\
&=: f_1 + f_2 + f_3 + f_4.\notag
\end{align}
We integrate these four terms over $d\mathbf{y}_1$ and $r_1 dr_1$.

For the term $f_1$, we split the Laplacian into its spatial and scale components: $\Delta_1 = \Delta_{y_1} + \partial_{r_1}^2$.
\begin{align*}
&\iint_{\mathbb{R}^{m} \times \mathbb{R}_+} f_1 \, r_1 dr_1 d\mathbf{y}_1 \\
&= \frac{1}{2} \int_{\mathbb{R}_+} \left( \int_{\mathbb{R}^{m}} \Delta_{y_1} \big( |v_{2,3}|^2 \varphi^2 \big) d\mathbf{y}_1 \right) r_1 dr_1 + \frac{1}{2} \int_{\mathbb{R}^{m}} \left( \int_0^\infty \partial_{r_1}^2 \big( |v_{2,3}|^2 \varphi^2 \big) r_1 dr_1 \right) d\mathbf{y}_1.
\end{align*}
By the divergence theorem, the spatial integral completely vanishes since the Poisson extension and its derivatives exhibit rapid polynomial decay at spatial infinity (as $|\mathbf{y}_1| \to \infty$), leaving no boundary contribution.

For the scale integral, let $F(r_1) = |v_{2,3}|^2 \varphi(U_g)^2$. Integrating by parts once (setting $u = r_1$ and $dv = F''(r_1)dr_1$) yields:
\[
\int_0^\infty F''(r_1) r_1 dr_1 = \Big[ r_1 F'(r_1) \Big]_0^\infty - \int_0^\infty F'(r_1) dr_1 = \Big[ r_1 F'(r_1) \Big]_0^\infty - \Big[ F(r_1) \Big]_0^\infty.
\]
We evaluate the boundaries:

$\bullet$ As $r_1 \to \infty$, the Poisson extension spreads out, meaning $U_g(\mathbf{y}, \mathbf{r}) \to 0$. Since the cut-off $\varphi(t) = 0$ for $t \le C_1$ (where $C_1 > 0$), $\varphi(U_g)$ is identically zero for sufficiently large $r_1$. Thus, we have $F(\infty) = 0$ and $F'(\infty) = 0$.

$\bullet$  As $r_1 \to 0$, the standard gradient properties of the Poisson kernel ensure that $r_1 \partial_{r_1} U_f \to 0$ almost everywhere for integrable functions. Thus, the scale-weighted derivative $r_1 F'(r_1)$ vanishes at the origin.

Since the upper limits vanish and the lower limit of the first term vanishes, the only remaining term is the lower boundary evaluation of $F(r_1)$ evaluated at zero. The integration by parts is evaluated    to $F(0)$, that is,
\begin{align}\label{int f1}
\iint_{\mathbb{R}^{m} \times \mathbb{R}_+} f_1 \, r_1 dr_1 d\mathbf{y}_1 = \frac{1}{2} F(0) = \frac{1}{2} \int_{\mathbb{R}^{m}} |v_{2,3}(\mathbf{y}, 0, r_2, r_3)|^2 \varphi(U_g(\mathbf{y}, 0, r_2, r_3))^2 d\mathbf{y}_1.
\end{align}
This boundary term drops the integration of $r_1$, then becomes the starting point for the next iteration of the remaining parameters.

We now consider the cross terms $f_2, f_3, f_4$.

We first bound the absolute value of the first-order cross term $f_2$ using Young's inequality ($2ab \le \epsilon a^2 + \frac{1}{\epsilon} b^2$) with $\epsilon = \frac{1}{10}$. By explicitly choosing $a = |\nabla_1 v_{2,3}| \varphi(U_g)$ and $b = |2 v_{2,3} \varphi'(U_g) \nabla_1 U_g|$, we have:
\begin{align*}
|f_2| &= \left| 2 \big(\nabla_1 v_{2,3} \varphi(U_g)\big) \cdot \big(2 v_{2,3} \varphi'(U_g) \nabla_1 U_g\big) \right| \\
&\le \frac{1}{10} \big| \nabla_1 v_{2,3} \varphi(U_g) \big|^2 + 10 \big| 2 v_{2,3} \varphi'(U_g) \nabla_1 U_g \big|^2 \\
&= \frac{1}{10} |\nabla_1 v_{2,3}|^2 \varphi(U_g)^2 + 10 \left( 4 |v_{2,3}|^2 |\varphi'(U_g)|^2 |\nabla_1 U_g|^2 \right) \\
&= \frac{1}{10} |\nabla_1 v_{2,3}|^2 \varphi(U_g)^2 + 40 |v_{2,3}|^2 |\varphi'(U_g)|^2 |\nabla_1 U_g|^2.
\end{align*}

Next, we simply take the absolute values of the remaining terms $f_3$ and $f_4$. Since they are already products of squares (or simple scalars), they do not require Young's inequality. We only need to ensure that the potentially negative second derivative $\varphi''(U_g)$ is enveloped by the absolute value:
\begin{align*}
    |f_3| = \left| - |v_{2,3}|^2 |\varphi'(U_g)|^2 |\nabla_1 U_g|^2 \right| = |v_{2,3}|^2 |\varphi'(U_g)|^2 |\nabla_1 U_g|^2, 
\end{align*}
and
\begin{align*}
|f_4| = \left| - |v_{2,3}|^2 \varphi(U_g) \varphi''(U_g) |\nabla_1 U_g|^2 \right| = |v_{2,3}|^2 \big|\varphi(U_g) \varphi''(U_g)\big| |\nabla_1 U_g|^2.
\end{align*}
By the triangle inequality, the total error from the cross terms is bounded by the sum of their absolute values. Notice that $|f_2|$, $|f_3|$, and $|f_4|$ all share the common functional multiplier $|v_{2,3}|^2 |\nabla_1 U_g|^2$. We group them explicitly:
\begin{align*}
|f_2 + f_3 + f_4| 
&\le \frac{1}{10} |\nabla_1 v_{2,3}|^2 \varphi(U_g)^2  + 40 |v_{2,3}|^2 |\varphi'(U_g)|^2 |\nabla_1 U_g|^2 \\
&\quad +  |v_{2,3}|^2 |\varphi'(U_g)|^2 |\nabla_1 U_g|^2  +  |v_{2,3}|^2 \big|\varphi(U_g) \varphi''(U_g)\big| |\nabla_1 U_g|^2\\
&= \frac{1}{10} |\nabla_1 v_{2,3}|^2 \varphi(U_g)^2 + |v_{2,3}|^2 \Big( 41 |\varphi'(U_g)|^2 + \big|\varphi(U_g) \varphi''(U_g)\big| \Big) |\nabla_1 U_g|^2.
\end{align*}

To absorb this messy bracketed term into a single harmonic-compatible form, we define a new smooth cut-off function $\Phi_1(t)$ such that the square of  its derivative    matches the internal bracket:
\[
\Phi_1'(t) = \Big( 41 |\varphi'(t)|^2 + \big|\varphi(t) \varphi''(t)\big| \Big)^{1/2}, \quad \text{with} \quad \Phi_1(C_1) = 0.
\]
Since $\varphi(t)$ is constant outside the interval $(C_1, 9/10)$, its derivatives $\varphi'$ and $\varphi''$ vanish identically there. This ensures that $\Phi_1'(t) = 0$ everywhere outside that interval, meaning that $\Phi_1$ inherits the   same compact support properties as $\varphi$.

By the chain rule, $\nabla_1 \Phi_1(U_g) = \Phi_1'(U_g) \nabla_1 U_g$. Squaring this gives 
\[
|\nabla_1 \Phi_1(U_g)|^2 = |\Phi_1'(U_g)|^2 |\nabla_1 U_g|^2.
\] 
Substituting this directly back into our grouped bound, we can rewrite it elegantly and compactly as:
\begin{align}\label{bound f234}
|f_2 + f_3 + f_4| \le \frac{1}{10} |\nabla_1 v_{2,3}|^2 \varphi(U_g)^2 + |v_{2,3}|^2 |\nabla_1 \Phi_1(U_g)|^2.
\end{align}

Thus, combining \eqref{eq:integral_A}, \eqref{I1 f1234}, \eqref{int f1} and \eqref{bound f234}, we have
\begin{align*}
\mathcal{I}_1 &\le \int_{\mathbb{R}^{2m} \times \mathbb{R}_+^2} \frac{1}{2} |v_{2,3}|_{r_1=0}^2 \varphi(U_g|_{r_1=0})^2 \, d\mathbf{y} \, r_2 dr_2 \, r_3 dr_3 \\
&\quad + \frac{1}{10} \underbrace{\int_{\mathbb{R}^{2m} \times \mathbb{R}_+^3} |\nabla_1 v_{2,3}|^2 \varphi(U_g)^2 r_1 r_2 r_3 \, d\mathbf{y} d\mathbf{r}}_{= \mathcal{I}_1} \\
&\quad + \int_{\mathbb{R}^{2m} \times \mathbb{R}_+^3} |v_{2,3}|^2 |\nabla_1 \Phi_1(U_g)|^2 r_1 r_2 r_3 \, d\mathbf{y} d\mathbf{r}.
\end{align*}
We now absorb the $\frac{1}{10} \mathcal{I}_1$ term into the left-hand side by subtracting it from $\mathcal{I}_1$:
\begin{align*}
 \frac{9}{10} \mathcal{I}_1 
&\le \int_{\mathbb{R}^{2m} \times \mathbb{R}_+^2} \frac{1}{2} |v_{2,3}|_{r_1=0}^2 \varphi(U_g|_{r_1=0})^2 \, d\mathbf{y} \, r_2 dr_2 \, r_3 dr_3 \\
&\quad + \int_{\mathbb{R}^{2m} \times \mathbb{R}_+^3} |v_{2,3}|^2 |\nabla_1 \Phi_1(U_g)|^2 r_1 r_2 r_3 \, d\mathbf{y} d\mathbf{r}.
\end{align*}

Since we established earlier that $\int_{A_\beta(\lambda)} S_{twist}(f)^2 d\mathbf{x} \le \mathcal{I}_1$, substituting this algebraic result provides the complete bound for the first iteration:
\begin{align}\label{eq:iter1}
\int_{A_\beta(\lambda)} S_{twist}(f)^2 d\mathbf{x} &\le \frac{5}{9} \int_{\mathbb{R}^{2m} \times \mathbb{R}_+^2} |v_{2,3}|_{r_1=0}^2 \varphi(U_g|_{r_1=0})^2 \, d\mathbf{y} \, r_2 dr_2 \, r_3 dr_3 \nonumber \\
&\quad + \frac{10}{9} \int_{\mathbb{R}^{2m} \times \mathbb{R}_+^3} |v_{2,3}|^2 |\nabla_1 \Phi_1(U_g)|^2 r_1 r_2 r_3 \, d\mathbf{y} d\mathbf{r}.
\end{align}

Thus, the estimate is reduced to a two-parameter term and an error term. The first term depends only on $r_2$ and $r_3$, and will be treated by repeating the argument in the second block. In the second term, the derivative $\nabla_1$ has been transferred from $v_{2,3}$ to the cut-off $\Phi_1(U_g)$, and this term will be handled in the same way.

\subsection{Second iteration: passing $\nabla_2$ to $g$.} 

Recall the function $v_{2,3} = \nabla_2 v_3$, where $v_3 = \nabla_3 U_f$. (Note: The scale weights $r_1, r_2, r_3$ have already been extracted into the volume measure $r_1 dr_1 \, r_2 dr_2 \, r_3 dr_3$ by the definition of the area function, simplifying the gradients).

From the first iteration, we have two integrals containing the term $|\nabla_2 v_3|^2 B$, where the weight function $B$ is either $\varphi(U_g|_{r_1=0})^2$ (from the boundary term) or $|\nabla_1 \Phi_1(U_g)|^2$ (from the cross-derivative term). To formalize the bootstrapping process, we explicitly define these two integrals:
\begin{align*}
\mathcal{I}_{2, cross} := \int_{\mathbb{R}^{2m} \times \mathbb{R}_+^3} |\nabla_2 v_3|^2 |\nabla_1 \Phi_1(U_g)|^2 r_1 r_2 r_3 \, d\mathbf{y} d\mathbf{r}, 
\end{align*}
and
\begin{align*}
\mathcal{I}_{2, bdry}:= \int_{\mathbb{R}^{2m} \times \mathbb{R}_+^2} |\nabla_2 v_3|_{r_1=0}^2 \varphi(U_g|_{r_1=0})^2 \, d\mathbf{y} \, r_2 dr_2 \, r_3 dr_3.
\end{align*}

We demonstrate the process on the cross-derivative integral $\mathcal{I}_{2, cross}$; the boundary term $\mathcal{I}_{2, bdry}$ follows an identical procedure.

We apply the harmonic product rule for the standard Euclidean Laplacian $\Delta_2 = \Delta_{y_2} + \partial_{r_2}^2$ to the product $A \cdot B$, where $A = |v_3|^2$ and $B = |\nabla_1 \Phi_1(U_g)|^2$. 
Recall the identity $\Delta_2(A B) = (\Delta_2 A) B + A (\Delta_2 B) + 2 \nabla_2 A \cdot \nabla_2 B$. 
Since $v_3$ is harmonic with respect to the second block ($\Delta_2 v_3 = 0$), we have $\Delta_2(|v_3|^2) = 2|\nabla_2 v_3|^2$. Furthermore, the gradient expands as $\nabla_2(|v_3|^2) = 2 v_3 \nabla_2 v_3$. Substituting these into the product rule gives:
\begin{align*}
\Delta_2 \Big( |v_3|^2 |\nabla_1 \Phi_1|^2 \Big) &= \Big( 2|\nabla_2 v_3|^2 \Big) |\nabla_1 \Phi_1|^2 + |v_3|^2 \Delta_2 \big( |\nabla_1 \Phi_1|^2 \big) + 2 \Big( 2 v_3 \nabla_2 v_3 \Big) \cdot \nabla_2 \big( |\nabla_1 \Phi_1|^2 \big) \\
&= 2 |\nabla_2 v_3|^2 |\nabla_1 \Phi_1|^2 + |v_3|^2 \Delta_2 \big( |\nabla_1 \Phi_1|^2 \big) + 4 v_3 \nabla_2 v_3 \cdot \nabla_2 \big( |\nabla_1 \Phi_1|^2 \big).
\end{align*}

Rearranging algebraically to isolate the target term $|\nabla_2 v_3|^2 |\nabla_1 \Phi_1|^2$, and dividing the entire equation by 2, we obtain the fundamental identity for the second iteration:
\begin{align}\label{nabla2 v3 nabla1 phi1}
|\nabla_2 v_3|^2 |\nabla_1 \Phi_1|^2 &= \frac{1}{2} \Delta_2 \Big( |v_3|^2 |\nabla_1 \Phi_1|^2 \Big)  - 2 v_3 \nabla_2 v_3 \cdot \nabla_2 \big( |\nabla_1 \Phi_1|^2 \big)  - \frac{1}{2} |v_3|^2 \Delta_2 \big( |\nabla_1 \Phi_1|^2 \big).
\end{align}

For the term $\frac{1}{2} \Delta_2 \Big( |v_3|^2 |\nabla_1 \Phi_1|^2 \Big)$, we separate the spatial and scale components:
\begin{align*}
\frac{1}{2} \iint_{\mathbb{R}^{2m} \times \mathbb{R}_+} \Delta_2 \Big( |v_3|^2 |\nabla_1 \Phi_1|^2 \Big) r_2 dr_2 d\mathbf{y}_2
& = \frac{1}{2} \int_{\mathbb{R}_+} \left( \int_{\mathbb{R}^{m}} \Delta_{y_2} \Big( |v_3|^2 |\nabla_1 \Phi_1|^2 \Big) d\mathbf{y}_2 \right) r_2 dr_2 \\
&\quad+ \frac{1}{2} \int_{\mathbb{R}^{m}} \left( \int_0^\infty \partial_{r_2}^2 \Big( |v_3|^2 |\nabla_1 \Phi_1|^2 \Big) r_2 dr_2 \right) d\mathbf{y}_2.
\end{align*}
By the divergence theorem, the spatial integral completely vanishes due to the rapid polynomial decay of the Poisson extension and its derivatives at spatial infinity.

For the scale integral, let $F(r_2) = |v_3|^2 |\nabla_1 \Phi_1|^2$. Integrating by parts once yields:
\[
\int_0^\infty F''(r_2) r_2 dr_2 = \Big[ r_2 F'(r_2) \Big]_0^\infty - \int_0^\infty F'(r_2) dr_2 = \Big[ r_2 F'(r_2) \Big]_0^\infty - \Big[ F(r_2) \Big]_0^\infty.
\]
We evaluate the boundaries:

$\bullet$ As $r_2 \to \infty$, the Poisson extension vanishes. Since the compact support of $\Phi_1$ forces it to zero for small inputs, the upper boundary evaluates   to zero ($F(\infty) = 0$ and $F'(\infty) = 0$).

$\bullet$  As $r_2 \to 0$, the $r_2$-weighted gradient of the harmonic extension goes to zero, ensuring the term $r_2 F'(r_2)$ vanishes at the origin.

Thus, 
\begin{align*}
&\frac{1}{2} \iint \Delta_2 \Big( |v_3|^2 |\nabla_1 \Phi_1|^2 \Big) r_2 dr_2 d\mathbf{y}_2 
={1\over2}F(0)= \frac{1}{2} \int_{\mathbb{R}^{m}} \Big( |v_3(\mathbf{y}, r_1, 0, r_3)|^2 |\nabla_1 \Phi_1(U_g(\mathbf{y}, r_1, 0, r_3))|^2 \Big) d\mathbf{y}_2.
\end{align*}

We now consider the cross term $- 2 v_3 \nabla_2 v_3 \cdot \nabla_2 \big( |\nabla_1 \Phi_1|^2 \big)$ in \eqref{nabla2 v3 nabla1 phi1}.
 Let $\mathbf{W} = \nabla_1 \Phi_1(U_g)$, which is a smooth vector-valued function. Then
\[
2 v_3 \nabla_2 v_3 \cdot \nabla_2 \big( |\mathbf{W}|^2 \big) = 2 v_3 \nabla_2 v_3 \cdot \big( 2 \mathbf{W} \cdot \nabla_2 \mathbf{W} \big) = 4 ( \mathbf{W} \nabla_2 v_3 ) \cdot ( v_3 \nabla_2 \mathbf{W} ).
\]

We apply Young's inequality in the form $2 \mathbf{a} \cdot \mathbf{b} \le \epsilon |\mathbf{a}|^2 + \frac{1}{\epsilon} |\mathbf{b}|^2$ with $\epsilon = \frac{1}{10}$. By identifying $\mathbf{a} = \mathbf{W} \nabla_2 v_3$ and $\mathbf{b} = 2 v_3 \nabla_2 \mathbf{W}$, we obtain:
\begin{align*}
\left| 4 ( \mathbf{W} \nabla_2 v_3 ) \cdot ( v_3 \nabla_2 \mathbf{W} ) \right| &\le \frac{1}{10} |\mathbf{W} \nabla_2 v_3|^2 + 10 |2 v_3 \nabla_2 \mathbf{W}|^2 \\
&= \frac{1}{10} |\nabla_2 v_3|^2 |\nabla_1 \Phi_1|^2 + 40 |v_3|^2 |\nabla_2 \nabla_1 \Phi_1|^2.
\end{align*}

Next, we evaluate the remaining term $\frac{1}{2} |v_3|^2 \big| \Delta_2 \big( |\mathbf{W}|^2 \big) \big|$ in \eqref{nabla2 v3 nabla1 phi1}. Using the identity $\Delta_2( |\mathbf{W}|^2 ) = 2|\nabla_2 \mathbf{W}|^2 + 2 \mathbf{W} \cdot \Delta_2 \mathbf{W}$, we have:
\[
\frac{1}{2} |v_3|^2 \left| \Delta_2 \big( |\nabla_1 \Phi_1|^2 \big) \right| \le |v_3|^2 |\nabla_2 \nabla_1 \Phi_1|^2 + |v_3|^2 \left| \nabla_1 \Phi_1 \cdot \Delta_2 \nabla_1 \Phi_1 \right|.
\]
By the chain rule, the mixed derivatives of $\Phi_1(U_g)$ expand completely into polynomial combinations of the gradients of $U_g$. Specifically, note that $\Delta_2 U_g = 0$:
\begin{align*}
\nabla_1 \Phi_1(U_g) = \Phi_1'(U_g) \nabla_1 U_g, 
\end{align*}
\begin{align*}
\nabla_2 \nabla_1 \Phi_1(U_g) = \Phi_1''(U_g) \nabla_2 U_g \otimes \nabla_1 U_g + \Phi_1'(U_g) \nabla_2 \nabla_1 U_g, 
\end{align*}
and
\begin{align*}
\Delta_2 \nabla_1 \Phi_1(U_g) = \nabla_1 \big( \Delta_2 \Phi_1(U_g) \big) = \nabla_1 \big( \Phi_1''(U_g) |\nabla_2 U_g|^2 \big).
\end{align*}
Expanding the last term, we see that it is composed of terms like $\Phi_1'''(U_g) \nabla_1 U_g |\nabla_2 U_g|^2$ and $2\Phi_1''(U_g) \nabla_2 U_g \cdot \nabla_1 \nabla_2 U_g$. These terms are exclusively composed of the components $|\nabla_1 U_g| |\nabla_2 U_g|^2$ and $|\nabla_2 U_g| |\nabla_1 \nabla_2 U_g|$, all of which are enveloped by the smooth, compactly supported derivatives of $\Phi_1$.

Combining the above estimates, we see that the second and third term in the right-hand side of \eqref{nabla2 v3 nabla1 phi1} is bounded by
\begin{align*}
&\left| - 2 v_3 \nabla_2 v_3 \cdot \nabla_2 \big( |\nabla_1 \Phi_1|^2 \big) \right| + \left| \frac{1}{2} |v_3|^2 \Delta_2 \big( |\nabla_1 \Phi_1|^2 \big) \right| \\
&\le \frac{1}{10} |\nabla_2 v_3|^2 |\nabla_1 \Phi_1|^2 + |v_3|^2 \left( 41 |\nabla_2 \nabla_1 \Phi_1(U_g)|^2 + \left| \nabla_1 \Phi_1(U_g) \cdot \Delta_2 \nabla_1 \Phi_1 (U_g)\right| \right).
\end{align*}

To absorb this bracketed term, we substitute the expansions of $\nabla_1 \Phi_1(U_g)$, $\nabla_2 \nabla_1 \Phi_1(U_g)$, and $\Delta_2 \nabla_1 \Phi_1(U_g)$. For the first term in the bracket, we obtain:
\begin{align*}
    41|\nabla_2 \nabla_1 \Phi_1(U_g)|^2
    &= 41|\Phi_1''(U_g) \nabla_1 U_g \otimes\nabla_2 U_g+\Phi_1'(U_g)\nabla_1\nabla_2U_g|^2\\
    &\le 82|\Phi_1''(U_g)|^2|\nabla_1 U_g \otimes\nabla_2 U_g|^2+82|\Phi_1'(U_g)|^2|\nabla_1\nabla_2U_g|^2.
\end{align*}

For the second term in the bracket, we use Young's inequality and obtain:
\begin{align*}
    &\left| \nabla_1 \Phi_1(U_g) \cdot \Delta_2 \nabla_1 \Phi_1 (U_g)\right|\\
    &=\Big|\Phi_1'(U_g)\nabla_1U_g \cdot \big(\Phi_1'''(U_g)\nabla_1U_g|\nabla_2U_g|^2+2\Phi_1''(U_g)\nabla_2U_g \cdot \nabla_2\nabla_1U_g\big)\Big|\\
    &\le |\Phi_1'(U_g)\Phi_1'''(U_g)||\nabla_1U_g|^2|\nabla_2U_g|^2+2\big|\Phi_1'(U_g)\Phi_1''(U_g)(\nabla_1U_g\otimes\nabla_2U_g) \cdot \nabla_2\nabla_1U_g\big|\\
    &\le |\Phi_1'(U_g)\Phi_1'''(U_g)||\nabla_1U_g|^2|\nabla_2U_g|^2+|\Phi_1''(U_g)|^2|\nabla_1U_g\otimes\nabla_2U_g|^2+|\Phi_1'(U_g)|^2|\nabla_1\nabla_2U_g|^2.
\end{align*}

Combine them and we obtain the total estimate of the bracketed term:
\begin{align*}
    &41|\nabla_2 \nabla_1 \Phi_1(U_g)|^2 +  \left| \nabla_1 \Phi_1(U_g) \cdot \Delta_2 \nabla_1 \Phi_1 (U_g)\right|\\
    &\qquad\le \big(83|\Phi_1''(U_g)|^2+|\Phi_1'(U_g)\Phi_1'''(U_g)|\big)|\nabla_1U_g|^2|\nabla_2U_g|^2+ 83|\Phi_1'(U_g)|^2|\nabla_1\nabla_2U_g|^2.
\end{align*}

Now, we define two smooth cut-off functions $\Psi_A$ and $\Psi_B$ such that their squares    match the corresponding internal brackets:
\[
\Psi_A(t)=\Big(83|\Phi_1''(t)|^2+|\Phi_1'(t)\Phi_1'''(t)|\Big)^{1/2}
\quad\text{and}\quad
\Psi_B(t)=\Big(83|\Phi_1'(t)|^2\Big)^{1/2}.
\]
Then we further define the enveloping function $\mathcal{G}_2(\mathbf{y}, \mathbf{r})$:
\[
\mathcal{G}_2(\mathbf{y}, \mathbf{r}) := 
|W_A|^2+|W_B|^2,
\]
where
$W_A = \Psi_A(U_g) (\nabla_1 U_g \otimes \nabla_2 U_g)$ and $W_B = \Psi_B(U_g) (\nabla_1 \nabla_2 U_g)$.

Since $\Phi_1(t)$ is constant outside $(C_1, 9/10)$, its derivatives vanish there, guaranteeing that $\Psi_A$, $\Psi_B$ and $\mathcal{G}_2$ inherits the   same compact support as our original cut-off $\varphi$. Substituting the   boundary integration of the Laplacian term at $r_2=0$ and this new unified pointwise upper bound back into the full integral defining $\mathcal{I}_{2, cross}$, we obtain:
\begin{align*}
\mathcal{I}_{2, cross} &\le \int_{\mathbb{R}^{2m} \times \mathbb{R}_+^2} \frac{1}{2} \Big( |v_3|^2 |\nabla_1 \Phi_1(U_g)|^2 \Big)\Big|_{r_2=0} \, d\mathbf{y} \, r_1 dr_1 \, r_3 dr_3 \\
&\quad + \frac{1}{10} \underbrace{ \int_{\mathbb{R}^{2m} \times \mathbb{R}_+^3} |\nabla_2 v_3|^2 |\nabla_1 \Phi_1(U_g)|^2 r_1 r_2 r_3 \, d\mathbf{y} d\mathbf{r} }_{= \mathcal{I}_{2, cross}} \\
&\quad + \int_{\mathbb{R}^{2m} \times \mathbb{R}_+^3} |v_3|^2 \mathcal{G}_2(\mathbf{y}, \mathbf{r}) r_1 r_2 r_3 \, d\mathbf{y} d\mathbf{r}.
\end{align*}
Thus, 
\begin{align}\label{eq:I2_cross_bound}
\mathcal{I}_{2, cross} &\le \frac{5}{9} \int_{\mathbb{R}^{2m} \times \mathbb{R}_+^2} \Big( |v_3|^2 |\nabla_1 \Phi_1(U_g)|^2 \Big)\Big|_{r_2=0} \, d\mathbf{y} \, r_1 dr_1 \, r_3 dr_3 \nonumber \\
&\quad + \frac{10}{9} \int_{\mathbb{R}^{2m} \times \mathbb{R}_+^3} |v_3|^2 \mathcal{G}_2(\mathbf{y}, \mathbf{r}) r_1 r_2 r_3 \, d\mathbf{y} d\mathbf{r}.
\end{align}
We apply the   same harmonic product rule process to $\mathcal{I}_{2, bdry}$. The only difference is that the weight function is $\varphi(U_g|_{r_1=0})^2$ instead of $|\nabla_1 \Phi_1(U_g)|^2$. The $r_2$-integration by parts gives an evaluation at $r_2=0$, and Young's inequality yields a cross-term bound involving $\nabla_2 \Phi_1(U_g|_{r_1=0})$. Expanding this bound, separating the $\frac{1}{10} \mathcal{I}_{2, bdry}$ term and then absorbing it by the left-hand side gives:
\begin{align}\label{eq:I2_bdry_bound}
\mathcal{I}_{2, bdry} &\le \frac{5}{9} \int_{\mathbb{R}^{2m} \times \mathbb{R}_+} \Big( |v_3|^2 \varphi(U_g)^2 \Big)\Big|_{r_1=0, r_2=0} \, d\mathbf{y} \, r_3 dr_3 \nonumber \\
&\quad + \frac{10}{9} \int_{\mathbb{R}^{2m} \times \mathbb{R}_+^2} |v_3|_{r_1=0}^2 |\nabla_2 \Phi_1(U_g|_{r_1=0})|^2 \, d\mathbf{y} \, r_2 dr_2 \, r_3 dr_3.
\end{align}

We substitute the expanded, absorbed bounds \eqref{eq:I2_cross_bound} and \eqref{eq:I2_bdry_bound} back into our main inequality resulting from Iteration 1. Recall from equation \eqref{eq:iter1} that $\int_{A_\beta(\lambda)} S_{twist}(f)^2 d\mathbf{x} \le \frac{5}{9}\mathcal{I}_{2, bdry} + \frac{10}{9}\mathcal{I}_{2, cross}$. Substituting the bounded expansions isolates the full tri-parameter integral of $v_3$ and completely defines the explicit boundary terms:
\begin{align}\label{eq:iter2}
\int_{A_\beta(\lambda)} S_{twist}(f)^2 d\mathbf{x} &\le \frac{100}{81} \int_{\mathbb{R}^{2m} \times \mathbb{R}_+^3} |v_3|^2 \mathcal{G}_2(\mathbf{y}, \mathbf{r}) r_1 r_2 r_3 \, d\mathbf{y} d\mathbf{r}  + \mathcal{B}_1 + \mathcal{B}_2 + \mathcal{B}_3,
\end{align}
where the \textit{boundary terms} is defined as
\begin{align*}
\mathcal{B}_1 &:= \frac{50}{81} \int_{\mathbb{R}^{2m} \times \mathbb{R}_+^2} |v_3(\mathbf{y}, r_1, 0, r_3)|^2 |\nabla_1 \Phi_1(U_g(\mathbf{y}, r_1, 0, r_3))|^2 \, d\mathbf{y} \, r_1 dr_1 \, r_3 dr_3,\\
\mathcal{B}_2 &:= \frac{50}{81} \int_{\mathbb{R}^{2m} \times \mathbb{R}_+^2} |v_3(\mathbf{y}, 0, r_2, r_3)|^2 |\nabla_2 \Phi_1(U_g(\mathbf{y}, 0, r_2, r_3))|^2 \, d\mathbf{y} \, r_2 dr_2 \, r_3 dr_3,\\
\mathcal{B}_3&:= \frac{25}{81} \int_{\mathbb{R}^{2m} \times \mathbb{R}_+} |v_3(\mathbf{y}, 0, 0, r_3)|^2 \varphi(U_g(\mathbf{y}, 0, 0, r_3))^2 \, d\mathbf{y} \, r_3 dr_3.
\end{align*}

Since the Poisson kernels behave as approximate identities, evaluations at $r_1=0$ or $r_2=0$ correspond    to the pointwise limits in those parameters. These boundary integrals $\mathcal{B}_1, \mathcal{B}_2, \mathcal{B}_3$ now depend exclusively on the final undifferentiated target function $v_3 = \nabla_3 U_f$ and are  positioned to undergo the {third iteration} (passing $\nabla_3$ off $v_3$).

\subsection{Third iteration: passing $\nabla_3$ to $g$.}\label{sec:3.4}

Finally, we unpack the last target function $v_3 = \nabla_3 U_f$. (As established in Iteration 2, the scale weight $r_3$ is naturally incorporated into the volume measure). We are now evaluating integrals containing the term $|\nabla_3 U_f|^2 \mathcal{G}_2$, where the remaining undifferentiated factor is    $|U_f|^2$, and $\mathcal{G}_2$ is the smooth enveloping structure formed in the previous iterations.

We define the primary integral generated by Iteration 2 over the bulk space:
\begin{align*}
\mathcal{I}_{main} &:= \int_{\mathbb{R}^{2m} \times \mathbb{R}_+^3} |\nabla_3 U_f|^2 \mathcal{G}_2(\mathbf{y}, \mathbf{r}) r_1 r_2 r_3 \, d\mathbf{y} d\mathbf{r}.
\end{align*}
Since the third block is generated by the fiber integration along the diagonal in the twisted setting, its spatial gradient is the directional derivative $\nabla_{twist} = \nabla_{y_1} + \nabla_{y_2}$. We apply the harmonic product rule for the full twisted space-time Laplacian $\Delta_3 = \Delta_{twist} + \partial_{r_3}^2$ to the product $|U_f|^2 \mathcal{G}_2$.

The full extension $U_f$ is multi-harmonic, we then have $\Delta_3 U_f = 0$, which yields $\Delta_3(|U_f|^2) = 2|\nabla_3 U_f|^2$, where $\nabla_3 = (\nabla_{twist}, \partial_{r_3})$. Furthermore, the gradient expands as $\nabla_3(|U_f|^2) = 2 U_f \nabla_3 U_f$.\\
Expanding the twisted Laplacian of the product gives:
\begin{align*}
\Delta_3 \Big( |U_f|^2 \mathcal{G}_2 \Big) &= \Big( \Delta_3 |U_f|^2 \Big) \mathcal{G}_2 + |U_f|^2 \Delta_3 \mathcal{G}_2 + 2 \nabla_3 \big( |U_f|^2 \big) \cdot \nabla_3 \mathcal{G}_2 \\
&= 2 |\nabla_3 U_f|^2 \mathcal{G}_2 + |U_f|^2 \Delta_3 \mathcal{G}_2 + 4 U_f \nabla_3 U_f \cdot \nabla_3 \mathcal{G}_2.
\end{align*}

Rearranging algebraically to isolate the target term $|\nabla_3 U_f|^2 \mathcal{G}_2$, and dividing the entire equation by 2, we obtain the fundamental identity for the third iteration:
\begin{align}\label{nabla3 U}
|\nabla_3 U_f|^2 \mathcal{G}_2 &= \frac{1}{2} \Delta_3 \Big( |U_f|^2 \mathcal{G}_2 \Big)  - 2 U_f \nabla_3 U_f \cdot \nabla_3 \mathcal{G}_2  - \frac{1}{2} |U_f|^2 \Delta_3 \mathcal{G}_2.
\end{align}

For the leading term $ \frac{1}{2} \Delta_3 \Big( |U_f|^2 \mathcal{G}_2 \Big) $, we separate the spatial (twisted) and scale components explicitly:
\begin{align*}
\frac{1}{2} \iint_{\mathbb{R}^{2m} \times \mathbb{R}_+} \Delta_3 \Big( |U_f|^2 \mathcal{G}_2 \Big) r_3 dr_3 d\mathbf{y} 
&= \frac{1}{2} \int_{\mathbb{R}_+} \left( \int_{\mathbb{R}^{2m}} \Delta_{twist} \Big( |U_f|^2 \mathcal{G}_2 \Big) d\mathbf{y} \right) r_3 dr_3 \\
&+ \frac{1}{2} \int_{\mathbb{R}^{2m}} \left( \int_0^\infty \partial_{r_3}^2 \Big( |U_f|^2 \mathcal{G}_2 \Big) r_3 dr_3 \right) d\mathbf{y}.
\end{align*}
Since the twisted Laplacian $\Delta_{twist}$ is the sum of second-order directional derivatives along the diagonal, we can apply the standard divergence theorem over $\mathbb{R}^{2m}$. The spatial integral completely vanishes due to the rapid polynomial decay of the Poisson extension $U_f$ and its derivatives at spatial infinity.

For the scale integral, let $F(r_3) = |U_f|^2 \mathcal{G}_2$. A third integration by parts once over $r_3$ yields:
\begin{align*}
\int_0^\infty \partial_{r_3}^2 \Big( |U_f|^2 \mathcal{G}_2 \Big) r_3 dr_3 = \Big[ r_3 \partial_{r_3} \Big( |U_f|^2 \mathcal{G}_2 \Big) \Big]_0^\infty - \Big[ |U_f|^2 \mathcal{G}_2 \Big]_0^\infty.
\end{align*}
We evaluate the boundaries:
\begin{itemize}
\item As $r_3 \to \infty$, the Poisson extension $U_f \to 0$, forcing the upper boundary evaluation to be   zero.
\item As $r_3 \to 0$, the $r_3$-weighted gradient of the harmonic extension vanishes, ensuring the term $r_3 \partial_{r_3} \big( |U_f|^2 \mathcal{G}_2 \big)$ vanishes at the origin.
\end{itemize}

Thus, 
\begin{align*}
\frac{1}{2} \iint_{\mathbb{R}^{2m} \times \mathbb{R}_+} \Delta_3 \Big( |U_f|^2 \mathcal{G}_2 \Big) r_3 dr_3 d\mathbf{y} = {1\over2} F(0)= \frac{1}{2} \int_{\mathbb{R}^{2m}} \Big( |U_f(\mathbf{y}, r_1, r_2, 0)|^2 \mathcal{G}_2(\mathbf{y}, r_1, r_2, 0) \Big) d\mathbf{y}.
\end{align*}

We now consider the second term $ - 2 U_f \nabla_3 U_f \cdot \nabla_3 \mathcal{G}_2$ in the right-hand side of \eqref{nabla3 U}. From iteration 2, our enveloping geometric weight is explicitly structured as the sum of the squared norms of two smooth tensor-valued functions: $\mathcal{G}_2 = |W_A|^2 + |W_B|^2$, where $W_A = \Psi_A(U_g) (\nabla_1 U_g \otimes \nabla_2 U_g)$ and $W_B = \Psi_B(U_g) (\nabla_1 \nabla_2 U_g)$. 

Remarking that $\mathcal{G}_2$ splits linearly, we then apply the twisted Laplacian $\Delta_3$ and the product rule distributively to the terms $|U_f|^2 |W_A|^2$ and $|U_f|^2 |W_B|^2$. We demonstrate the bound for the generic component $W \in \{W_A, W_B\}$. 

Recall that $\nabla_3 = (\nabla_{twist}, \partial_{r_3})$ is the full space-time twisted gradient. By the chain rule, $\nabla_3(|W|^2) = 2 W \cdot \nabla_3 W$ (where $\cdot$ denotes the appropriate tensor inner product contracting over the twisted directions). The cross-term for this component expands as:
\[
 \nabla_3(|U_f|^2) \cdot \nabla_3(|W|^2) = (2 U_f \nabla_3 U_f) \cdot (2 W \cdot \nabla_3 W) = 4 (W \nabla_3 U_f) \cdot (U_f \nabla_3 W).
\]
To avoid dividing by zero where the compactly supported weights vanish, we apply Young's inequality ($2ab \le \epsilon a^2 + \frac{1}{\epsilon} b^2$) with $\epsilon = \frac{1}{10}$ directly to this factored form, bypassing fractional division entirely:
\begin{align*}
\big| 4 (W \nabla_3 U_f) \cdot (U_f \nabla_3 W) \big| &\le \frac{1}{10} \big| W \nabla_3 U_f \big|^2 + 40 \big| U_f \nabla_3 W \big|^2 
= \frac{1}{10} |\nabla_3 U_f|^2 |W|^2 + 40 |U_f|^2 |\nabla_3 W|^2.
\end{align*}

Next we consider the last term $ - \frac{1}{2} |U_f|^2 \Delta_3 \mathcal{G}_2$ in the right-hand side of \eqref{nabla3 U}.
 Using the identity $\Delta_3(|W|^2) = 2|\nabla_3 W|^2 + 2 W \cdot \Delta_3 W$, we take the absolute value and get
\[
\left| \frac{1}{2} |U_f|^2 \Delta_3(|W|^2) \right| \le |U_f|^2 |\nabla_3 W|^2 + |U_f|^2 \big| W \cdot \Delta_3 W \big|.
\]
Summing these bounds over both components $W_A$ and $W_B$, the total pointwise error from the cross terms and twisted Laplacian terms acting on $|U_f|^2 \mathcal{G}_2$ is   bounded by
\[
\frac{1}{10} |\nabla_3 U_f|^2 \Big( |W_A|^2 + |W_B|^2 \Big) + |U_f|^2 \sum_{W \in \{W_A, W_B\}} \Big( 41 |\nabla_3 W|^2 + \big| W \cdot \Delta_3 W \big| \Big).
\]

Notice that the $\frac{1}{10}$ coefficient isolates our target twisted gradient $|\nabla_3 U_f|^2 \mathcal{G}_2$. The remaining terms are purely products of $|U_f|^2$ and smooth derivatives of the geometric weights. We group this sum into our final globally well-defined enveloping factor:
\[
\mathcal{G}_3(\mathbf{y}, \mathbf{r}) := \sum_{W \in \{W_A, W_B\}} \Big( 41 |\nabla_3 W|^2 + \big| W \cdot \Delta_3 W \big| \Big).
\]
Since $W_A$ and $W_B$ inherit the compact support of the original cut-off $\varphi$, their spatial and scale derivatives remain smoothly compactly supported. This guarantees that $\mathcal{G}_3$ is uniformly bounded.

\medskip
Recall from the end of Iteration 2 \eqref{eq:iter2} that our total bound for $\int_{A_\beta(\lambda)} S_{twist}(f)^2 d\mathbf{x}$ consists of the main tri-parameter integral and three inherited boundary integrals. To formalize the final integration by parts and bootstrapping on all of them simultaneously, we define the following four target integrals:
\begin{align*}
\mathcal{I}_{main} &:= \int_{\mathbb{R}^{2m} \times \mathbb{R}_+^3} |\nabla_3 U_f|^2 \mathcal{G}_2(\mathbf{y}, \mathbf{r}) r_1 r_2 r_3 \, d\mathbf{y} d\mathbf{r}, \\
\mathcal{I}_{\mathcal{B}_1} &:= \int_{\mathbb{R}^{2m} \times \mathbb{R}_+^2} |\nabla_3 U_f|_{r_2=0}^2 |\nabla_1 \Phi_1(U_g|_{r_2=0})|^2 \, d\mathbf{y} \, r_1 dr_1 \, r_3 dr_3, \\
\mathcal{I}_{\mathcal{B}_2} &:= \int_{\mathbb{R}^{2m} \times \mathbb{R}_+^2} |\nabla_3 U_f|_{r_1=0}^2 |\nabla_2 \Phi_1(U_g|_{r_1=0})|^2 \, d\mathbf{y} \, r_2 dr_2 \, r_3 dr_3, \\
\mathcal{I}_{\mathcal{B}_3} &:= \int_{\mathbb{R}^{2m} \times \mathbb{R}_+} |\nabla_3 U_f|_{r_1=0, r_2=0}^2 \varphi(U_g|_{r_1=0, r_2=0})^2 \, d\mathbf{y} \, r_3 dr_3.
\end{align*}

The total bound from Iteration 2 is    the linear combination of these integrals with their inherited coefficients:
\begin{equation}\label{eq:iter2_linear_combo}
\int_{A_\beta(\lambda)} S_{twist}(f)^2 d\mathbf{x} \le \frac{100}{81} \mathcal{I}_{main} + \frac{50}{81} \mathcal{I}_{\mathcal{B}_1} + \frac{50}{81} \mathcal{I}_{\mathcal{B}_2} + \frac{25}{81} \mathcal{I}_{\mathcal{B}_3}.
\end{equation}
Since all four of these integrals contain the target factor $|v_3|^2 = |\nabla_3 U_f|^2$ and are integrated over $r_3$, we apply the Iteration 3 product rule (using the full twisted Laplacian $\Delta_3$), $r_3$-integration by parts, and Young's inequality to all of them. 

For any of these four target integrals, let $\Omega$ denote the domain of the spatial and remaining scale variables, and let $d\omega$ denote its corresponding volume measure (e.g., $\Omega = \mathbb{R}^{2m} \times \mathbb{R}_+^2$ with $d\omega = d\mathbf{y} \, r_1 dr_1 \, r_2 dr_2$ for $\mathcal{I}_{main}$, whereas $\Omega = \mathbb{R}^{2m}$ with $d\omega = d\mathbf{y}$ for $\mathcal{I}_{\mathcal{B}_3}$). 

Thus, the generic integral takes the form $\mathcal{I} = \int_{\Omega \times \mathbb{R}_+} |\nabla_3 U_f|^2 W \, d\omega \, r_3 dr_3$ with a weight function $W$. This process expands the integral into a boundary evaluation over $\Omega$, a $\frac{1}{10}$ bootstrapping term over the full domain $\Omega \times \mathbb{R}_+$, and a bulk geometric error term:
\begin{align*}
\mathcal{I} &\le \int_{\Omega} \frac{1}{2} \Big( |U_f|^2 W \Big)\Big|_{r_3=0} \, d\omega+ \frac{1}{10} \underbrace{ \int_{\Omega \times \mathbb{R}_+} |\nabla_3 U_f|^2 W \, d\omega \, r_3 dr_3 }_{= \mathcal{I}} + \int_{\Omega \times \mathbb{R}_+} |U_f|^2 \mathcal{G}_W \, d\omega \, r_3 dr_3,
\end{align*}
where $\mathcal{G}_W$ is the enveloping smooth function containing the twisted derivatives $\nabla_3 W$ and $\Delta_3 W$. 
Thus,
\begin{equation}\label{eq:local_bootstrap}
\mathcal{I} \le \frac{5}{9} \int_{\Omega} \Big( |U_f|^2 W \Big)\Big|_{r_3=0} \, d\omega + \frac{10}{9} \int_{\Omega \times \mathbb{R}_+} |U_f|^2 \mathcal{G}_W \, d\omega \, r_3 dr_3.
\end{equation}
To ensure that the applications of Young's inequality to the inherited boundary integrals $\mathcal{I}_{\mathcal{B}_1}, \mathcal{I}_{\mathcal{B}_2}$, and $\mathcal{I}_{\mathcal{B}_3}$ do not produce zero denominators, we treat their base weight functions as squared norms of smooth vector fields. We define $W^{(1)}(\mathbf{y}, r_1, r_3) := \nabla_1 \Phi_1(U_g(\mathbf{y}, r_1, 0, r_3))$, $W^{(2)}(\mathbf{y}, r_2, r_3) := \nabla_2 \Phi_1(U_g(\mathbf{y}, 0, r_2, r_3))$ and $W^{(3)}(\mathbf{y}, r_3) := \varphi(U_g(\mathbf{y}, 0, 0, r_3)).$

Applying the twisted Laplacian $\Delta_3$ and the product rule to $|U_f|^2 |W^{(j)}|^2$, we construct the enveloping weights for the boundary terms: 
\begin{align*}
\mathcal{G}^{(1,3)}(\mathbf{y}, r_1, 0, r_3) &:= 41 \big| \nabla_3 W^{(1)} \big|^2 + \big| W^{(1)} \cdot \Delta_3 W^{(1)} \big|, \\
\mathcal{G}^{(2,3)}(\mathbf{y}, 0, r_2, r_3) &:= 41 \big| \nabla_3 W^{(2)} \big|^2 + \big| W^{(2)} \cdot \Delta_3 W^{(2)} \big|, \\
\mathcal{G}^{(3)}(\mathbf{y}, 0, 0, r_3) &:= 41 \big| \nabla_3 W^{(3)} \big|^2 + \big| W^{(3)} \Delta_3 W^{(3)} \big|.
\end{align*}
Since $\Phi_1$ and $\varphi$ are constant outside the interval $(C_1, 9/10)$, all mixed spatial and scale derivatives (including the twisted derivatives) contained within these explicitly defined envelopes identically vanish outside this region. This guarantees that $\mathcal{G}^{(1,3)}$, $\mathcal{G}^{(2,3)}$, and $\mathcal{G}^{(3)}$ remain globally smooth, compactly supported, and structured to define valid bounds.

Now, we substitute the locally bootstrapped bound \eqref{eq:local_bootstrap} for each of the four components ($\mathcal{I}_{main}$, $\mathcal{I}_{\mathcal{B}_1}$, $\mathcal{I}_{\mathcal{B}_2}$, $\mathcal{I}_{\mathcal{B}_3}$) directly back into top-level linear combination \eqref{eq:iter2_linear_combo}:
\begin{align*}
\int_{A_\beta(\lambda)} S_{twist}(f)^2 d\mathbf{x} 
&\le \frac{100}{81} \left( \frac{5}{9}\text{bdry}_{main} + \frac{10}{9}\text{bulk}_{main} \right) + \frac{50}{81} \left( \frac{5}{9}\text{bdry}_{\mathcal{B}_1} + \frac{10}{9}\text{bulk}_{\mathcal{B}_1} \right) \\
&\quad + \frac{50}{81} \left( \frac{5}{9}\text{bdry}_{\mathcal{B}_2} + \frac{10}{9}\text{bulk}_{\mathcal{B}_2} \right)  + \frac{25}{81} \left( \frac{5}{9}\text{bdry}_{\mathcal{B}_3} + \frac{10}{9}\text{bulk}_{\mathcal{B}_3} \right).
\end{align*}
Substituting these bounds and simplifying the coefficients, we obtain:
\begin{align}\label{eq:iter3}
\int_{A_\beta(\lambda)} S_{twist}(f)^2 d\mathbf{x} \le \mathcal{I}_{bulk} + \mathcal{B}_{final},
\end{align}
where we explicitly define the main tri-parameter bulk integral as:
\begin{align*}
\mathcal{I}_{bulk} := \frac{1000}{729} \int_{\mathbb{R}^{2m} \times \mathbb{R}_+^3} |U_f(\mathbf{y}, \mathbf{r})|^2 \mathcal{G}_3(\mathbf{y}, \mathbf{r}) r_1 r_2 r_3 \, d\mathbf{y} d\mathbf{r},
\end{align*}
and $\mathcal{B}_{final}$ is the explicit sum of the seven lower-dimensional boundary and mixed-bulk integrals generated during this final pass:
\begin{align*}
\mathcal{B}_{final} := \mathcal{T}_{main, bdry} + \mathcal{T}_{1, bulk} + \mathcal{T}_{1, bdry} + \mathcal{T}_{2, bulk} + \mathcal{T}_{2, bdry} + \mathcal{T}_{3, bulk} + \mathcal{T}_{3, bdry}.
\end{align*}
These seven terms are explicitly defined with their   multiplied coefficients as follows:

\medskip
\noindent{1. The $r_3=0$ boundary generated from the main tri-parameter integral:}
\begin{align*}
\mathcal{T}_{main, bdry} := \frac{500}{729} \int_{\mathbb{R}^{2m} \times \mathbb{R}_+^2} |U_f(\mathbf{y}, r_1, r_2, 0)|^2 \mathcal{G}_2(\mathbf{y}, r_1, r_2, 0) \, d\mathbf{y} \, r_1 dr_1 \, r_2 dr_2.
\end{align*}

\noindent{2. The processing of $\mathcal{I}_{\mathcal{B}_1}$ (originally over $r_1, r_3$):}
\begin{align*}
\mathcal{T}_{1, bulk} &:= \frac{500}{729} \int_{\mathbb{R}^{2m} \times \mathbb{R}_+^2} |U_f(\mathbf{y}, r_1, 0, r_3)|^2 \mathcal{G}^{(1,3)}(\mathbf{y}, r_1, 0, r_3) \, d\mathbf{y} \, r_1 dr_1 \, r_3 dr_3,\quad\text{and} \\
\mathcal{T}_{1, bdry} &:= \frac{250}{729} \int_{\mathbb{R}^{2m} \times \mathbb{R}_+} |U_f(\mathbf{y}, r_1, 0, 0)|^2 |\nabla_1 \Phi_1(U_g(\mathbf{y}, r_1, 0, 0))|^2 \, d\mathbf{y} \, r_1 dr_1.
\end{align*}

\medskip
\noindent{3. The processing of $\mathcal{I}_{\mathcal{B}_2}$ (originally over $r_2, r_3$):}
\begin{align*}
\mathcal{T}_{2, bulk} &:= \frac{500}{729} \int_{\mathbb{R}^{2m} \times \mathbb{R}_+^2} |U_f(\mathbf{y}, 0, r_2, r_3)|^2 \mathcal{G}^{(2,3)}(\mathbf{y}, 0, r_2, r_3) \, d\mathbf{y} \, r_2 dr_2 \, r_3 dr_3, \quad\text{and}\\
\mathcal{T}_{2, bdry} &:= \frac{250}{729} \int_{\mathbb{R}^{2m} \times \mathbb{R}_+} |U_f(\mathbf{y}, 0, r_2, 0)|^2 |\nabla_2 \Phi_1(U_g(\mathbf{y}, 0, r_2, 0))|^2 \, d\mathbf{y} \, r_2 dr_2.
\end{align*}

\medskip
\noindent{4. The processing of $\mathcal{I}_{\mathcal{B}_3}$ (originally over $r_3$ only):}
\begin{align*}
\mathcal{T}_{3, bulk} &:= \frac{250}{729} \int_{\mathbb{R}^{2m} \times \mathbb{R}_+} |U_f(\mathbf{y}, 0, 0, r_3)|^2 \mathcal{G}^{(3)}(\mathbf{y}, 0, 0, r_3) \, d\mathbf{y} \, r_3 dr_3, \quad\text{and}\\
\mathcal{T}_{3, bdry} &:= \frac{125}{729} \int_{\mathbb{R}^{2m}} |U_f(\mathbf{y}, 0, 0, 0)|^2 \varphi(U_g(\mathbf{y}, 0, 0, 0))^2 \, d\mathbf{y}.
\end{align*}
\subsection{$L^2$ Littlewood--Paley theory and the Chebyshev splitting} \label{sec:3.5}
From iteration 3, we have bounded the $L^2$ norm of the twisted area function over the proxy good set $A_\beta(\lambda)$ by a main volumetric bulk integral and seven lower-dimensional boundary integrals. We will bound these terms using the pointwise control of the Poisson extension and the $L^2$ boundedness of the twisted Area function (the twisted Littlewood-Paley theory), completely bypassing the need for Carleson measure or BMO theory.

\medskip
We first establish a pointwise bound on $|U_f(\mathbf{y}, \mathbf{r})|$ over the support of the integrals. By construction in Steps 1 through 3, the enveloping geometric weights $\mathcal{G}_3, \mathcal{G}^{(1,3)}, \mathcal{G}^{(2,3)}$, and $\mathcal{G}^{(3)}$ are all formed from smooth derivatives of the cut-off $\varphi(U_g)$. Since $\varphi(t)$ is identically constant for $t \le C_1$, its derivatives are supported where $U_g(\mathbf{y}, \mathbf{r}) > C_1$.

By Lemma \ref{lem:strict_separation}, we explicitly know that $U_g(\mathbf{y}, \mathbf{r}) \le C_1$ everywhere outside the enlarged tent $\widetilde{W}_\beta$. Therefore, the support of all the integration weights is contained within $\widetilde{W}_\beta$.

Let $(\mathbf{y}, \mathbf{r}) \in \widetilde{W}_\beta = \bigcup_{\mathbf{z} \in E_\beta(\lambda)} \Gamma^\beta(\mathbf{z})$. By definition, there exists at least one base point $\mathbf{z} \in E_\beta(\lambda)$ such that $(\mathbf{y}, \mathbf{r}) \in \Gamma^\beta(\mathbf{z})$. Since $\mathbf{z}$ belongs to the good set $E_\beta(\lambda)$, its non-tangential maximal function satisfies $U_f^*(\mathbf{z}) \le \lambda$. 

Since the supremum over the cone at $\mathbf{z}$ is bounded by $\lambda$, it follows that the extension evaluated at the specific point inside the cone is also bounded:
\begin{equation}\label{eq:pointwise_lambda}
|U_f(\mathbf{y}, \mathbf{r})| \le U_f^*(\mathbf{z}) \le \lambda.
\end{equation}
Thus, $|U_f(\mathbf{y}, \mathbf{r})| \le \lambda$ uniformly over the entire domain of integration for all bulk and mixed-boundary terms.

\medskip
We evaluate the main bulk integral and the intermediate mixed-boundary terms from equation \eqref{eq:iter3}. Applying the pointwise bound \eqref{eq:pointwise_lambda} over $\widetilde{W}_\beta$, we pull $\lambda^2$ out of all these integrals.

To apply the $L^2$ theory, we again use the indicator of the bad set, $h(\mathbf{y}) = \chi_{E_\beta(\lambda)^c}(\mathbf{y})$. Since $U_g + U_h = 1$, their spatial and scale gradients are   negatives: $\nabla_j U_g = - \nabla_j U_h$. Squaring this removes the sign, allowing us to majorize the geometric weight envelopes entirely by the mixed gradients of $U_h$.

We can group the integrals into three categories based on the number of active scale parameters (i.e., parameters integrated over $\mathbb{R}_+$ rather than evaluated at $0$).

\medskip
\noindent {I. The tri-parameter bulk term:}\\
Recall from Section \ref{sec:3.4} that 
\begin{align*}
\mathcal{I}_{bulk} &:= \frac{1000}{729} \iint_{\mathbb{R}^{2m} \times \mathbb{R}_+^3} |U_f(\mathbf{y}, \mathbf{r})|^2 \mathcal{G}_3(\mathbf{y}, \mathbf{r}) r_1 r_2 r_3 \, d\mathbf{y} d\mathbf{r}\\
&\le \frac{1000}{729} \lambda^2 \iint_{\mathbb{R}^{2m} \times \mathbb{R}_+^3} \mathcal{G}_3(\mathbf{y}, \mathbf{r}) r_1 r_2 r_3 \, d\mathbf{y} d\mathbf{r},
\end{align*}
where we used \eqref{eq:pointwise_lambda}.

To bound this integral, we expand the structure of the enveloping weight $\mathcal{G}_3(\mathbf{y}, \mathbf{r})$. Recall from Iteration 2 that the tensor-valued functions $W_A$ and $W_B$ originate from the derivatives of the smooth cut-off composition $\Phi_1(U_g)$:
\begin{align*}
W_A &:= \Psi_A(U_g) \big( \nabla_1 U_g \otimes \nabla_2 U_g \big), \quad\text{and}\quad
W_B := \Psi_B(U_g) \big( \nabla_1 \nabla_2 U_g \big)
\end{align*}
where $\Psi_A(t)=\Big(83|\Phi_1''(t)|^2+|\Phi_1'(t)\Phi_1'''(t)|\Big)^{1/2}$ and $
\Psi_B(t)=\Big(83|\Phi_1'(t)|^2|\Big)^{1/2}$.

Recall from Section \ref{sec:3.4} that 
\begin{align*}
\mathcal{G}_3(\mathbf{y}, \mathbf{r}) := \sum_{W \in \{W_A, W_B\}} \Big( 41 |\nabla_3 W|^2 + \big| W \cdot \Delta_3 W \big| \Big).
\end{align*}
Since the cut-off function $\Phi_1$ is smooth and constant outside the interval $(C_1, 9/10)$, its derivatives are uniformly bounded. Applying the product and chain rules to expand $\nabla_3$ and $\Delta_3$ acting on $W_A$ and $W_B$, the weight $\mathcal{G}_3$ is pointwise bounded by a polynomial combination of purely geometric mixed gradients.
\begin{itemize}
\item Estimates of $|\nabla_3 W_A|^2$:
\begin{align*}
    \nabla_3 W_A
    &=\Psi_A'(U_g) \nabla_3 U_g \otimes \nabla_1 U_g \otimes \nabla_2 U_g \\
    &\quad + \Psi_A(U_g) (\nabla_3 \nabla_1 U_g \otimes \nabla_2 U_g + \nabla_1 U_g \otimes \nabla_3 \nabla_2 U_g ).
\end{align*}
Hence, we have
\begin{align*}
    |\nabla_3 W_A|^2
    &\lesssim |\nabla_1 U_g|^2 |\nabla_2 U_g|^2 |\nabla_3 U_g|^2+|\nabla_3 \nabla_1 U_g|^2 |\nabla_2 U_g|^2+|\nabla_1 U_g|^2 |\nabla_3 \nabla_2 U_g|^2.
\end{align*}

\item Estimates of $|W_A\cdot \Delta_3W_A|$:
\begin{align*}
    \Delta_3W_A
    &=\Psi_A''(U_g)|\nabla_3 U_g|^2 \nabla_1 U_g \otimes \nabla_2 U_g \\
    &\quad + 2\Psi_A'(U_g) \nabla_3 U_g \otimes (\nabla_3\nabla_1 U_g \otimes \nabla_2 U_g + \nabla_1 U_g \otimes \nabla_3\nabla_2 U_g) \\
    &\quad + 2\Psi_A(U_g) \nabla_3 \nabla_1 U_g \otimes \nabla_3\nabla_2 U_g.
\end{align*}
Thus, we have
\begin{align*}
    |W_A\cdot \Delta_3W_A|
    &\lesssim |\nabla_1 U_g|^2 |\nabla_2 U_g|^2 |\nabla_3 U_g|^2+|\nabla_2 U_g|^2 |\nabla_3 \nabla_1 U_g|^2 +|\nabla_1 U_g|^2 |\nabla_3 \nabla_2 U_g|^2.
\end{align*}

\item Estimates of $|\nabla_3 W_B|^2$:
\[
    \nabla_3W_B=\Psi_B'(U_g) \nabla_3 U_g \otimes \nabla_2 \nabla_1 U_g + \Psi_B (U_g) \nabla_3 \nabla_2 \nabla_1 U_g,
\]
which implies that
\[
    |\nabla_3 W_B|^2\lesssim |\nabla_3 U_g|^2 |\nabla_2 \nabla_1 U_g|^2 + |\nabla_3\nabla_2 \nabla_1 U_g|^2.
\]

\item Estimates of $|W_B\cdot \Delta_3W_B|$: Since 
\[
    \Delta_3 W_B=\Psi_B''(U_g) |\nabla_3 U_g|^2 \nabla_2 \nabla_1 U_g + 2\Psi_B'(U_g) \nabla_3 U_g \otimes \nabla_3 \nabla_2 \nabla_1 U_g,
\]
we thus have
\begin{align*}
    |W_B\cdot \Delta_3 W_B|
    &\le |\Psi_B(U_g) \Psi_B''(U_g)| |\nabla_3 U_g|^2 |\nabla_2 \nabla_1 U_g|^2 \\
    &\quad + 2|\Psi_B(U_g) \Psi_B'(U_g)| |\nabla_2 \nabla_1 U_g| |\nabla_3 U_g| |\nabla_3\nabla_2 \nabla_1 U_g|\\
    &\lesssim |\nabla_3 U_g|^2 |\nabla_2 \nabla_1 U_g|^2 + |\nabla_3\nabla_2 \nabla_1 U_g|^2.
\end{align*}
\end{itemize}

Let $\mathcal{M}(U_g)$ denote this explicit envelope:
\begin{align*}
\mathcal{G}_3(\mathbf{y}, \mathbf{r}) \lesssim \mathcal{M}(U_g) &:= |\nabla_1 \nabla_2 \nabla_3 U_g|^2 \\
&\quad + |\nabla_3 U_g|^2 |\nabla_1 \nabla_2 U_g|^2 + |\nabla_1 \nabla_3 U_g|^2 |\nabla_2 U_g|^2 + |\nabla_1 U_g|^2 |\nabla_2 \nabla_3 U_g|^2 \\
&\quad + |\nabla_3 U_g|^2 |\nabla_1 U_g|^2 |\nabla_2 U_g|^2.
\end{align*}
Although $\mathcal{M}(U_g)$ contains products of mixed lower-order gradients, their integrals are  bounded by the integral of the full mixed top-order gradient via integration by parts. We now first consider $|\nabla_1 U_g|^2 |\nabla_2 \nabla_3 U_g|^2$. Using the multi-harmonic identity $|\nabla_1 U_g|^2 = \frac{1}{2}\Delta_1(U_g^2)$ and integrating by parts over the block variable $r_1 dr_1 d\mathbf{y}_1$, the Laplacian shifts onto the neighboring gradient:
\begin{align*}
\iint_{\mathbb{R}^m \times \mathbb{R}_+} |\nabla_1 U_g|^2 |\nabla_2 \nabla_3 U_g|^2 r_1 dr_1 d\mathbf{y}_1 &= \frac{1}{2} \iint_{\mathbb{R}^m \times \mathbb{R}_+} \Delta_1(U_g^2) |\nabla_2 \nabla_3 U_g|^2 r_1 dr_1 d\mathbf{y}_1 \\
&= \frac{1}{2} \iint_{\mathbb{R}^m \times \mathbb{R}_+} U_g^2 \Delta_1\big(|\nabla_2 \nabla_3 U_g|^2\big) r_1 dr_1 d\mathbf{y}_1.
\end{align*}

We  now claim that we have $\Delta_1(|\nabla_2 \nabla_3 U_g|^2) = 2|\nabla_1 \nabla_2 \nabla_3 U_g|^2$. 

To see this identity, we express the squared norm of the mixed tensor $\nabla_2 \nabla_3 U_g$ in terms of its scalar components. Let $W = \nabla_2 \nabla_3 U_g$, such that $|W|^2 = \sum_{\alpha} (W_{\alpha})^2$, where the index $\alpha$ enumerates all the individual scalar partial derivatives comprising the mixed gradient in the second and third blocks.\\
Applying the linear operator $\Delta_1$ to this sum, we evaluate the action of the Laplacian on each scalar component using the standard product identity $\Delta(u^2) = 2u\Delta u + 2|\nabla u|^2$. Therefore,
\begin{align*}
\Delta_1\big(|\nabla_2 \nabla_3 U_g|^2\big) = \Delta_1 \left( \sum_{\alpha} (W_{\alpha})^2 \right) 
&= \sum_{\alpha} \Delta_1 \big( (W_{\alpha})^2 \big) = \sum_{\alpha} \Big( 2 W_{\alpha} \Delta_1(W_{\alpha}) + 2 |\nabla_1 W_{\alpha}|^2 \Big).
\end{align*}
Since $\Delta_1$, $\nabla_2$, and $\nabla_3$ all commute, by the established multi-harmonicity of the Poisson extension, we know that $\Delta_1 U_g = 0$. Therefore, the Laplacian of each component vanishes:
\begin{align*}
\Delta_1(W_{\alpha}) = \Delta_1 \big( \partial_\alpha U_g \big) = \partial_\alpha \big( \Delta_1 U_g \big) = 0.
\end{align*}
Consequently, the first term in the summation is identically zero for every component $\alpha$. The remaining terms    reconstruct the squared norm of the full tri-parameter mixed gradient:
\begin{align*}
\Delta_1\big(|\nabla_2 \nabla_3 U_g|^2\big) = \sum_{\alpha} 2 |\nabla_1 W_{\alpha}|^2 
&= 2 \sum_{\alpha} \big|\nabla_1 (\text{component}_\alpha \text{ of } \nabla_2 \nabla_3 U_g)\big|^2 
= 2 |\nabla_1 \nabla_2 \nabla_3 U_g|^2.
\end{align*}

Furthermore, since $U_g$ is the extension of a characteristic function, we have $|U_g| \le 1$. Substituting these yields    the top-order bound:
\begin{align*}
\iint_{\mathbb{R}^m \times \mathbb{R}_+} U_g^2 |\nabla_1 \nabla_2 \nabla_3 U_g|^2 r_1 dr_1 d\mathbf{y}_1 \le \iint_{\mathbb{R}^m \times \mathbb{R}_+} |\nabla_1 \nabla_2 \nabla_3 U_g|^2 r_1 dr_1 d\mathbf{y}_1.
\end{align*}

By applying this   integration by parts recursively to all lower-order combinations in $\mathcal{M}(U_g)$, the entire volumetric integral of $\mathcal{G}_3$ is majorized by the integral of the fully mixed gradient:
\begin{align*}
\iint_{\mathbb{R}^{2m} \times \mathbb{R}_+^3} \mathcal{G}_3(\mathbf{y}, \mathbf{r}) r_1 r_2 r_3 \, d\mathbf{y} d\mathbf{r} \lesssim \iint_{\mathbb{R}^{2m} \times \mathbb{R}_+^3} |\nabla_1 \nabla_2 \nabla_3 U_g(\mathbf{y}, \mathbf{r})|^2 r_1 r_2 r_3 \, d\mathbf{y} d\mathbf{r}.
\end{align*}
We now pass this bound to the indicator of the bad set, $h(\mathbf{y}) = \chi_{E_\beta(\lambda)^c}(\mathbf{y})$. Since $g(\mathbf{y}) + h(\mathbf{y}) = 1$ everywhere on $\mathbb{R}^{2m}$, their Poisson extensions satisfy $U_g + U_h = 1$. Differentiating with respect to the block variables yields $\nabla_j U_g = - \nabla_j U_h$. Squaring eliminates the sign, meaning the fully mixed gradients are identical:
\begin{align*}
|\nabla_1 \nabla_2 \nabla_3 U_g(\mathbf{y}, \mathbf{r})|^2 = |\nabla_1 \nabla_2 \nabla_3 U_h(\mathbf{y}, \mathbf{r})|^2.
\end{align*}
Substituting this into the bound for $\mathcal{I}_{bulk}$, we obtain:
\begin{align}\label{eq:bulk_Uh_bound}
\mathcal{I}_{bulk} \lesssim \lambda^2 \iint_{\mathbb{R}^{2m} \times \mathbb{R}_+^3} |\nabla_1 \nabla_2 \nabla_3 U_h(\mathbf{y}, \mathbf{r})|^2 r_1 r_2 r_3 \, d\mathbf{y} d\mathbf{r}.
\end{align}

Finally, we demonstrate that integrating these mixed gradients over the full half-space recovers    the $L^2$ norm of the tri-parameter twisted Area function of $h$. By definition:
\begin{align*}
\| S_{twist}^{(1,2,3)}(h) \|_{L^2(\mathbb{R}^{2m})}^2 &= \int_{\mathbb{R}^{2m}} \left( \iint_{\Gamma^\beta(\mathbf{x})} |r_1\nabla_1 r_2\nabla_2 r_3\nabla_3 U_h(\mathbf{y}, \mathbf{r})|^2 \frac{1}{r_1 r_2 r_3 V_{\mathbf{r}}} d\mathbf{y} d\mathbf{r} \right) d\mathbf{x} \\
&= \int_{\mathbb{R}^{2m}} \left( \iint_{\mathbf{y} \in T(\mathbf{x}, \beta\mathbf{r})} |\nabla_1 \nabla_2 \nabla_3 U_h(\mathbf{y}, \mathbf{r})|^2 \frac{r_1 r_2 r_3}{V_{\mathbf{r}}} d\mathbf{y} d\mathbf{r} \right) d\mathbf{x}.
\end{align*}
We apply Fubini's Theorem to interchange the order of integration. By the symmetry of the twisted tubes, the geometric condition $\mathbf{y} \in T(\mathbf{x}, \beta\mathbf{r})$ is equivalent to $\mathbf{x} \in T(\mathbf{y}, \beta\mathbf{r})$. Swapping the integrals yields:
\begin{align*}
\| S_{twist}^{(1,2,3)}(h) \|_{L^2(\mathbb{R}^{2m})}^2 &= \iint_{\mathbb{R}^{2m} \times \mathbb{R}_+^3} |\nabla_1 \nabla_2 \nabla_3 U_h(\mathbf{y}, \mathbf{r})|^2 \frac{r_1 r_2 r_3}{V_{\mathbf{r}}} \left( \int_{T(\mathbf{y}, \beta\mathbf{r})} d\mathbf{x} \right) d\mathbf{y} d\mathbf{r}.
\end{align*}
The inner integral over $d\mathbf{x}$ evaluates    to the volume of the twisted tube $|T(\mathbf{y}, \beta\mathbf{r})|$. By the scaling properties of the twisted geometry, $|T(\mathbf{y}, \beta\mathbf{r})| = \beta^{3m} |T(\mathbf{y}, \mathbf{r})| = \beta^{3m} V_{\mathbf{r}}$. 

Substituting this volume back into the integral, the $V_{\mathbf{r}}$ terms cancel:
\begin{align*}
\| S_{twist}^{(1,2,3)}(h) \|_{L^2(\mathbb{R}^{2m})}^2 &= \beta^{3m} \iint_{\mathbb{R}^{2m} \times \mathbb{R}_+^3} |\nabla_1 \nabla_2 \nabla_3 U_h(\mathbf{y}, \mathbf{r})|^2 r_1 r_2 r_3 \, d\mathbf{y} d\mathbf{r}.
\end{align*}
Since the aperture $\beta$ is a fixed geometric constant chosen in Theorem \ref{thm:good_lambda}, the full half-space integral in \eqref{eq:bulk_Uh_bound} is proportional to the $L^2$ norm of the area function. Thus, 
\begin{align*}
\mathcal{I}_{bulk} \lesssim \lambda^2 \| S_{twist}^{(1,2,3)}(h) \|_{L^2(\mathbb{R}^{2m})}^2.
\end{align*}

\medskip
\noindent {2. The two-parameter mixed terms:}
\begin{itemize}
    \item $\mathcal{T}_{main, bdry}$ (evaluated at $r_3=0$, integrated over $r_1, r_2$)
    \item $\mathcal{T}_{1, bulk}$ (evaluated at $r_2=0$, integrated over $r_1, r_3$)
    \item $\mathcal{T}_{2, bulk}$ (evaluated at $r_1=0$, integrated over $r_2, r_3$)
\end{itemize}

Before bounding these terms, we explicitly define the corresponding bi-parameter twisted area functions. When the twisted Poisson extension is evaluated at a boundary $r_j=0$, the approximate identity property of the Poisson kernel effectively reduces the operator to a bi-parameter extension in the remaining blocks. For example, $U_h(\mathbf{y}, r_1, r_2, 0)$ is a bi-parameter harmonic extension.\\
We define the corresponding bi-parameter non-tangential cone $\Gamma^{(1,2),\beta}(\mathbf{x})$ by restricting the full twisted cone to the subspace $r_3=0$. The volume of the bi-parameter cross-section of the twisted tube is denoted $V_{r_1, r_2} \simeq r_1^m r_2^m$. The bi-parameter twisted area function acting on the $r_1, r_2$ blocks is defined explicitly as:
\begin{align*}
S_{twist}^{(1,2)}(h)(\mathbf{x}) := \left( \iint_{\Gamma^{(1,2),\beta}(\mathbf{x})} |r_1\nabla_1 r_2\nabla_2 U_h(\mathbf{y}, r_1, r_2, 0)|^2 \frac{1}{r_1 r_2 V_{r_1, r_2}} \, d\mathbf{y} dr_1 dr_2 \right)^{1/2}.
\end{align*}
The operators $S_{twist}^{(1,3)}(h)$ and $S_{twist}^{(2,3)}(h)$ are defined by explicitly restricting the twisted geometry and the Poisson extension to their respective active blocks, limiting the inactive scale parameter to zero. 

For the $(1,3)$ blocks, the non-tangential cone $\Gamma^{(1,3),\beta}(\mathbf{x})$ restricts the full twisted cone to the subspace $r_2=0$, with the bi-parameter cross-sectional volume denoted $V_{r_1, r_3} \simeq r_1^m r_3^m$. The corresponding area function is defined as:
\begin{align*}
S_{twist}^{(1,3)}(h)(\mathbf{x}) := \left( \iint_{\Gamma^{(1,3),\beta}(\mathbf{x})} |r_1\nabla_1 r_3\nabla_3 U_h(\mathbf{y}, r_1, 0, r_3)|^2 \frac{1}{r_1 r_3 V_{r_1, r_3}} \, d\mathbf{y} dr_1 dr_3 \right)^{1/2}.
\end{align*}

Similarly, for the $(2,3)$ blocks, the cone $\Gamma^{(2,3),\beta}(\mathbf{x})$ restricts the geometry to the subspace $r_1=0$, with the cross-sectional volume $V_{r_2, r_3} \simeq r_2^m r_3^m$. The $(2,3)$ area function is defined as:
\begin{align*}
S_{twist}^{(2,3)}(h)(\mathbf{x}) := \left( \iint_{\Gamma^{(2,3),\beta}(\mathbf{x})} |r_2\nabla_2 r_3\nabla_3 U_h(\mathbf{y}, 0, r_2, r_3)|^2 \frac{1}{r_2 r_3 V_{r_2, r_3}} \, d\mathbf{y} dr_2 dr_3 \right)^{1/2}.
\end{align*}

We demonstrate the bounding process on the first of these mixed terms, $\mathcal{T}_{main, bdry}$. Recall its definition:
\begin{align*}
\mathcal{T}_{main, bdry} := \frac{500}{729} \int_{\mathbb{R}^{2m} \times \mathbb{R}_+^2} |U_f(\mathbf{y}, r_1, r_2, 0)|^2 \mathcal{G}_2(\mathbf{y}, r_1, r_2, 0) \, d\mathbf{y} \, r_1 dr_1 \, r_2 dr_2.
\end{align*}
Applying the pointwise bound $|U_f(\mathbf{y}, \mathbf{r})| \le \lambda$ uniformly over the support of $\mathcal{G}_2$, we pull out $\lambda^2$:
\begin{align*}
\mathcal{T}_{main, bdry} \le \frac{500}{729} \lambda^2 \iint_{\mathbb{R}^{2m} \times \mathbb{R}_+^2} \mathcal{G}_2(\mathbf{y}, r_1, r_2, 0) \, d\mathbf{y} \, r_1 dr_1 \, r_2 dr_2.
\end{align*}

We analyze the enveloping weight $\mathcal{G}_2(\mathbf{y}, r_1, r_2, 0)$. From its construction in Iteration 2, $\mathcal{G}_2$ consists of polynomial combinations of the mixed spatial and scale derivatives of $\Phi_1(U_g)$ with respect to the active blocks $1$ and $2$. Since the derivatives of $\Phi_1$ are bounded by absolute constants, $\mathcal{G}_2$ is pointwise majorized by a sum of purely geometric mixed gradients.

Applying the identical integration-by-parts "upgrading" logic established for the tri-parameter bulk term, the integral of this combination of lower-order mixed gradients over the $(r_1, r_2)$ half-space is bounded by the integral of the top-order fully mixed bi-parameter gradient:
\begin{align*}
&\iint_{\mathbb{R}^{2m} \times \mathbb{R}_+^2} \mathcal{G}_2(\mathbf{y}, r_1, r_2, 0) \, d\mathbf{y} \, r_1 dr_1 \, r_2 dr_2 \lesssim \iint_{\mathbb{R}^{2m} \times \mathbb{R}_+^2} |\nabla_1 \nabla_2 U_g(\mathbf{y}, r_1, r_2, 0)|^2 r_1 r_2 \, d\mathbf{y} dr_1 dr_2.
\end{align*}

Since $g + h = 1$, their bi-parameter extensions satisfy $\nabla_j U_g = - \nabla_j U_h$ for $j \in \{1, 2\}$. Thus, their squared mixed gradients are identical: $|\nabla_1 \nabla_2 U_g|^2 = |\nabla_1 \nabla_2 U_h|^2$. Substituting this into the bound yields:
\begin{align}\label{eq:2param_Uh_bound}
\mathcal{T}_{main, bdry} \lesssim \lambda^2 \iint_{\mathbb{R}^{2m} \times \mathbb{R}_+^2} |\nabla_1 \nabla_2 U_h(\mathbf{y}, r_1, r_2, 0)|^2 r_1 r_2 \, d\mathbf{y} dr_1 dr_2.
\end{align}

We now recover the $L^2$ norm of the bi-parameter area function via Fubini's Theorem. Squaring the explicit definition of $S_{twist}^{(1,2)}(h)$ and integrating over $d\mathbf{x}$, we have:
\begin{align*}
\| S_{twist}^{(1,2)}(h) \|_{L^2(\mathbb{R}^{2m})}^2 &= \int_{\mathbb{R}^{2m}} \left( \iint_{\Gamma^{(1,2),\beta}(\mathbf{x})} |\nabla_1 \nabla_2 U_h(\mathbf{y}, r_1, r_2, 0)|^2 \frac{r_1 r_2}{V_{r_1, r_2}} \, d\mathbf{y} dr_1 dr_2 \right) d\mathbf{x}.
\end{align*}
By the geometric symmetry of the bi-parameter restricted tubes, the non-tangential cone condition $\mathbf{y} \in T^{(1,2)}(\mathbf{x}, \beta r_1, \beta r_2)$ is equivalent to $\mathbf{x} \in T^{(1,2)}(\mathbf{y}, \beta r_1, \beta r_2)$. Swapping the order of integration gives:
\begin{align*}
\| S_{twist}^{(1,2)}(h) \|_{L^2(\mathbb{R}^{2m})}^2 &= \iint_{\mathbb{R}^{2m} \times \mathbb{R}_+^2} |\nabla_1 \nabla_2 U_h|^2 \frac{r_1 r_2}{V_{r_1, r_2}} \left( \int_{T^{(1,2)}(\mathbf{y}, \beta r_1, \beta r_2)} d\mathbf{x} \right) d\mathbf{y} dr_1 dr_2.
\end{align*}
The inner integral evaluates to the volume of the restricted tube, which scales    as $\beta^{2m} V_{r_1, r_2}$. The base volume $V_{r_1, r_2}$ cancels from the denominator:
\begin{align*}
\| S_{twist}^{(1,2)}(h) \|_{L^2(\mathbb{R}^{2m})}^2 &= \beta^{2m} \iint_{\mathbb{R}^{2m} \times \mathbb{R}_+^2} |\nabla_1 \nabla_2 U_h(\mathbf{y}, r_1, r_2, 0)|^2 r_1 r_2 \, d\mathbf{y} dr_1 dr_2.
\end{align*}
Since the aperture $\beta$ is fixed, the half-space integral in \eqref{eq:2param_Uh_bound} is bounded    by the $L^2$ norm. Therefore:
\begin{align*}
\mathcal{T}_{main, bdry} \lesssim \lambda^2 \| S_{twist}^{(1,2)}(h) \|_{L^2(\mathbb{R}^{2m})}^2.
\end{align*}

By identical symmetries acting on the weight $\mathcal{G}^{(1,3)}$ integrated over $r_1, r_3$ and the weight $\mathcal{G}^{(2,3)}$ integrated over $r_2, r_3$, we extract the corresponding bounds for the other two mixed integrals:
\begin{align*}
\mathcal{T}_{1, bulk} \lesssim \lambda^2 \| S_{twist}^{(1,3)}(h) \|_{L^2(\mathbb{R}^{2m})}^2 \quad\text{and}\quad
\mathcal{T}_{2, bulk} \lesssim \lambda^2 \| S_{twist}^{(2,3)}(h) \|_{L^2(\mathbb{R}^{2m})}^2.
\end{align*}

Summing these three results gives the complete bound for the bi-parameter mixed terms:
\begin{align*}
\mathcal{T}_{main, bdry} + \mathcal{T}_{1, bulk} + \mathcal{T}_{2, bulk} \lesssim \lambda^2 \left( \| S_{twist}^{(1,2)}(h) \|_{L^2}^2 + \| S_{twist}^{(1,3)}(h) \|_{L^2}^2 + \| S_{twist}^{(2,3)}(h) \|_{L^2}^2 \right).
\end{align*}

\medskip
\noindent {3. The one-parameter mixed terms:}
\begin{itemize}
    \item $\mathcal{T}_{1, bdry}$ (evaluated at $r_2=r_3=0$, integrated over $r_1$),
    \item $\mathcal{T}_{2, bdry}$ (evaluated at $r_1=r_3=0$, integrated over $r_2$),
    \item $\mathcal{T}_{3, bulk}$ (evaluated at $r_1=r_2=0$, integrated over $r_3$).
\end{itemize}

Before bounding these terms, we explicitly define the corresponding 1-parameter twisted area functions. By the approximate identity property of the Poisson kernel, evaluating the extension at two boundary limits simultaneously (e.g., $r_2=0$ and $r_3=0$) reduces the operator to a standard 1-parameter Poisson extension in the remaining active block, albeit embedded within the twisted multi-parameter space. 

We define the 1-parameter non-tangential cone $\Gamma^{(1),\beta}(\mathbf{x})$ by restricting the full twisted cone to the subspace $r_2=0, r_3=0$. The volume of this 1-parameter cross-section is denoted $V_{r_1} \simeq r_1^m$. The 1-parameter twisted area function acting purely on the $r_1$ block is defined explicitly as:
\begin{align*}
S_{twist}^{(1)}(h)(\mathbf{x}) := \left( \iint_{\Gamma^{(1),\beta}(\mathbf{x})} |r_1\nabla_1 U_h(\mathbf{y}, r_1, 0, 0)|^2 \frac{1}{r_1 V_{r_1}} \, d\mathbf{y} dr_1 \right)^{1/2}.
\end{align*}
By identical restrictions, the cone $\Gamma^{(2),\beta}(\mathbf{x})$ restricts the geometry to the subspace $r_1=0, r_3=0$ with cross-sectional volume $V_{r_2} \simeq r_2^m$. The corresponding area function is:
\begin{align*}
S_{twist}^{(2)}(h)(\mathbf{x}) := \left( \iint_{\Gamma^{(2),\beta}(\mathbf{x})} |r_2\nabla_2 U_h(\mathbf{y}, 0, r_2, 0)|^2 \frac{1}{r_2 V_{r_2}} \, d\mathbf{y} dr_2 \right)^{1/2}.
\end{align*}

Finally, the cone $\Gamma^{(3),\beta}(\mathbf{x})$ restricts the geometry to the subspace $r_1=0, r_2=0$ with cross-sectional volume $V_{r_3} \simeq r_3^m$. The third 1-parameter area function is
\begin{align*}
S_{twist}^{(3)}(h)(\mathbf{x}) := \left( \iint_{\Gamma^{(3),\beta}(\mathbf{x})} |r_3\nabla_3 U_h(\mathbf{y}, 0, 0, r_3)|^2 \frac{1}{r_3 V_{r_3}} \, d\mathbf{y} dr_3 \right)^{1/2}.
\end{align*}

We demonstrate the bounding process on $\mathcal{T}_{1, bdry}$. Recall that
\begin{align*}
\mathcal{T}_{1, bdry} := \frac{250}{729} \int_{\mathbb{R}^{2m} \times \mathbb{R}_+} |U_f(\mathbf{y}, r_1, 0, 0)|^2 |\nabla_1 \Phi_1(U_g(\mathbf{y}, r_1, 0, 0))|^2 \, d\mathbf{y} \, r_1 dr_1.
\end{align*}
Applying the pointwise bound $|U_f(\mathbf{y}, \mathbf{r})| \le \lambda$ uniformly over the support of the weight, we pull out the constant $\lambda^2$ and obtain
\begin{align*}
\mathcal{T}_{1, bdry} \le \frac{250}{729} \lambda^2 \iint_{\mathbb{R}^{2m} \times \mathbb{R}_+} |\nabla_1 \Phi_1(U_g(\mathbf{y}, r_1, 0, 0))|^2 r_1 \, d\mathbf{y} dr_1.
\end{align*}

We analyze the gradient weight. By the chain rule, $|\nabla_1 \Phi_1(U_g)|^2 = |\Phi_1'(U_g)|^2 |\nabla_1 U_g|^2$. Since the derivative $\Phi_1'$ is uniformly bounded by an absolute constant, this weight is bounded by the purely geometric 1-parameter gradient $|\nabla_1 U_g|^2$. Unlike the tri-parameter and bi-parameter terms, no further integration by parts is required here since the weight is already bounded by the top-order 1-parameter gradient.

Since $g + h = 1$, their 1-parameter extensions satisfy $\nabla_1 U_g = - \nabla_1 U_h$. Squaring this identity yields $|\nabla_1 U_g|^2 = |\nabla_1 U_h|^2$. Substituting this into the bound gives
\begin{align}\label{eq:1param_Uh_bound}
\mathcal{T}_{1, bdry} \lesssim \lambda^2 \iint_{\mathbb{R}^{2m} \times \mathbb{R}_+} |\nabla_1 U_h(\mathbf{y}, r_1, 0, 0)|^2 r_1 \, d\mathbf{y} dr_1.
\end{align}

We now recover the $L^2$ norm of the 1-parameter area function via Fubini's Theorem. Squaring the explicit definition of $S_{twist}^{(1)}(h)$ and integrating over $d\mathbf{x}$, we have
\begin{align*}
\| S_{twist}^{(1)}(h) \|_{L^2(\mathbb{R}^{2m})}^2 &= \int_{\mathbb{R}^{2m}} \left( \iint_{\Gamma^{(1),\beta}(\mathbf{x})} |\nabla_1 U_h(\mathbf{y}, r_1, 0, 0)|^2 \frac{r_1}{V_{r_1}} \, d\mathbf{y} dr_1 \right) d\mathbf{x}.
\end{align*}
By the geometric symmetry of the restricted tubes, the non-tangential cone condition $\mathbf{y} \in T^{(1)}(\mathbf{x}, \beta r_1)$ is equivalent to $\mathbf{x} \in T^{(1)}(\mathbf{y}, \beta r_1)$. Swapping the order of integration gives
\begin{align*}
\| S_{twist}^{(1)}(h) \|_{L^2(\mathbb{R}^{2m})}^2 &= \iint_{\mathbb{R}^{2m} \times \mathbb{R}_+} |\nabla_1 U_h|^2 \frac{r_1}{V_{r_1}} \left( \int_{T^{(1)}(\mathbf{y}, \beta r_1)} d\mathbf{x} \right) d\mathbf{y} dr_1.
\end{align*}
The inner integral evaluates to the volume of the restricted tube, scaling    as $\beta^m V_{r_1}$. The base volume $V_{r_1}$ cancels:
\begin{align*}
\| S_{twist}^{(1)}(h) \|_{L^2(\mathbb{R}^{2m})}^2 &= \beta^m \iint_{\mathbb{R}^{2m} \times \mathbb{R}_+} |\nabla_1 U_h(\mathbf{y}, r_1, 0, 0)|^2 r_1 \, d\mathbf{y} dr_1.
\end{align*}
Since the aperture $\beta$ is a fixed geometric constant, the half-space integral in \eqref{eq:1param_Uh_bound} is bounded    by the $L^2$ norm. Therefore,
\begin{align*}
\mathcal{T}_{1, bdry} \lesssim \lambda^2 \| S_{twist}^{(1)}(h) \|_{L^2(\mathbb{R}^{2m})}^2.
\end{align*}

By identical geometric symmetries acting on the weight $|\nabla_2 \Phi_1|^2$ integrated over $r_2$, and the weight $\mathcal{G}^{(3)}$ integrated over $r_3$, we extract the bounds for the other two 1-parameter mixed integrals and get
\begin{align*}
\mathcal{T}_{2, bdry} \lesssim \lambda^2 \| S_{twist}^{(2)}(h) \|_{L^2(\mathbb{R}^{2m})}^2, \quad\text{and}\quad
\mathcal{T}_{3, bulk} \lesssim \lambda^2 \| S_{twist}^{(3)}(h) \|_{L^2(\mathbb{R}^{2m})}^2.
\end{align*}

Summing these three results gives the complete bound for the 1-parameter mixed terms, then we have
\begin{align*}
\mathcal{T}_{1, bdry} + \mathcal{T}_{2, bdry} + \mathcal{T}_{3, bulk} \lesssim \lambda^2 \left( \| S_{twist}^{(1)}(h) \|_{L^2(\mathbb{R}^{2m})}^2 + \| S_{twist}^{(2)}(h) \|_{L^2(\mathbb{R}^{2m})}^2 + \| S_{twist}^{(3)}(h) \|_{L^2(\mathbb{R}^{2m})}^2 \right).
\end{align*}

\medskip
By the multi-parameter Littlewood--Paley theory, the twisted area function is bounded from $L^2(\mathbb{R}^{2m})$ to $L^2(\mathbb{R}^{2m})$ regardless of whether it is applied as a one-parameter, two-parameter, or three-parameter operator. 

We only verify the boundedness for $S_{twist}^{(3)}(h)$ as a representative case, while the remaining cases follow by an identical argument.

Note that $\mathcal{F}_y U_h(\xi_1,\xi_2,0,0,r_3)=e^{-r_3|\xi_1+\xi_2|}\hat h(\xi_1,\xi_2)$. Combining with (\ref{FourierofPoi}), we have
\begin{align*}
|\mathcal{F}_y(\nabla_3U_h)|^2
    &=\big|\mathcal{F}_y(\nabla_{y_1}U_h)+\mathcal{F}_y(\nabla_{y_2}U_h)\big|^2+|\mathcal{F}_y(\partial_{r_3}U_h)|^2\\
    &= |\xi_1+\xi_2|^2 |\mathcal{F}_y U_h|^2 + \Big|-|\xi_1+\xi_2| \mathcal{F}_y U_h\Big|^2 \\
    &= 2|\xi_1+\xi_2|^2 e^{-2r_3|\xi_1+\xi_2|}|\hat h(\xi_1,\xi_2)|^2.
\end{align*}
Then we can estimate the $L^2$ bound of $S_{twist}^{(3)}(h)$ via the Plancherel identity,
\begin{align*}
    \|S_{twist}^{(3)}(h)\|_{L^2(\mathbb{R}^{2m})}^2
    &\simeq \iint_{\mathbb{R}^{2m}\times\mathbb{R}_+} |\nabla_3 U_h(\mathbf{y},0,0,r_3)|^2 r_3 d\mathbf{y} dr_3\\
    &=\int_{\mathbb{R}_+}\Big(\int_{\mathbb{R}^{2m}} |\mathcal{F}_y(\nabla_3U_h)(\xi_1,\xi_2,0,0,r_3)|^2 d\xi_1 d\xi_2\Big) r_3 dr_3\\
    &=\int_{\mathbb{R}^{2m}} 2|\xi_1+\xi_2|^2 |\hat h(\xi_1,\xi_2)|^2 \Big(\int_{\mathbb{R}_+} e^{-2r_3|\xi_1+\xi_2|} r_3 dr_3\Big) d\xi_1 d\xi_2\\
    &= \int_{\mathbb{R}^{2m}} 2|\xi_1+\xi_2|^2 |\hat h(\xi_1,\xi_2)|^2 \left( \frac{1}{4|\xi_1+\xi_2|^2} \right) d\xi_1 d\xi_2\\
    &= \frac{1}{2} \int_{\mathbb{R}^{2m}}|\hat h(\xi_1,\xi_2)|^2 d\xi_1 d\xi_2\\
    &\simeq \|h\|_{L^2(\mathbb{R}^{2m})}^2.
\end{align*}
Therefore, each of these area function norms is bounded by the $L^2$ norm of the base function $h$,
\begin{align*}
\| S_{twist}^{(J)}(h) \|_{L^2(\mathbb{R}^{2m})}^2 &\le C \| h \|_{L^2(\mathbb{R}^{2m})}^2 = C \int_{\mathbb{R}^{2m}} \chi_{E_\beta(\lambda)^c}(\mathbf{x})^2 d\mathbf{x} 
= C \big|\big\{ \mathbf{x} \in \mathbb{R}^{2m} : U_f^*(\mathbf{x}) > \lambda \big\}\big|.
\end{align*}

Summing the bulk integral and all six intermediate mixed-boundary integrals, we obtain the unified bound
\begin{align*}
&\mathcal{I}_{bulk} + \mathcal{T}_{main, bdry} + \sum^2_{j=1}\left(\mathcal{T}_{j, bulk}+\mathcal{T}_{j, bdry}\right
)  + \mathcal{T}_{3, bulk}
\le C \lambda^2 \big|\big\{ \mathbf{x} \in \mathbb{R}^{2m} : U_f^*(\mathbf{x}) > \lambda \big\}\big|.
\end{align*}

\medskip
The only term remaining from the final assembly $\mathcal{B}_{final}$ is the pure boundary term $\mathcal{T}_{3, bdry}$, where all three scale parameters are evaluated at    zero:
\[
\mathcal{T}_{3, bdry} = \frac{125}{729} \int_{\mathbb{R}^{2m}} |U_f(\mathbf{y}, 0, 0, 0)|^2 \varphi(U_g(\mathbf{y}, 0, 0, 0))^2 d\mathbf{y}.
\]
As the scale parameters approach zero, the Poisson extensions converge almost everywhere to their initial base functions: $U_f(\mathbf{y}, 0, 0, 0) = f(\mathbf{y})$ and $U_g(\mathbf{y}, 0, 0, 0) = g(\mathbf{y}) = \chi_{E_\beta(\lambda)}(\mathbf{y})$.

Since $\varphi(1) = 1$ and $\varphi(0) = 0$, the composition $\varphi(g(\mathbf{y}))$ acts identically as the indicator for the good set $E_\beta(\lambda) = \{ \mathbf{x} \in \mathbb{R}^{2m} : U_f^*(\mathbf{x}) \le \lambda \}$. Furthermore, the base function is pointwise bounded by its own non-tangential maximal function almost everywhere: $|f(\mathbf{y})| \le U_f^*(\mathbf{y})$.

Substituting these limits directly into the integral restricts the domain to the good set, yielding:
\[
\mathcal{T}_{3, bdry} \le C \int_{\{ U_f^*(\mathbf{y}) \le \lambda \}} |f(\mathbf{y})|^2 d\mathbf{y} \le C \int_{\{ U_f^*(\mathbf{y}) \le \lambda \}} U_f^*(\mathbf{y})^2 d\mathbf{y}.
\]

\medskip
Summing the $L^2$ bound of the bulk and mixed-boundary terms  with the pure boundary evaluation, we have fully bounded the $L^2$ norm of the twisted area function over the proxy good set $A_\beta(\lambda)$:
\begin{equation}\label{eq:total_A_beta_bound}
\int_{A_\beta(\lambda)} S_{twist}(f)^2 d\mathbf{x} \le C \lambda^2 \big|\big\{ \mathbf{x} \in \mathbb{R}^{2m} : U_f^*(\mathbf{x}) > \lambda \big\}\big| + C \int_{\{ U_f^*(\mathbf{x}) \le \lambda \}} U_f^*(\mathbf{x})^2 d\mathbf{x}.
\end{equation}

To conclude, we evaluate the distribution function of the twisted area function. We split the measure of the super-level set of $S_{twist}(f)$ using $A_\beta(\lambda)$:
\begin{align*}
\big|\big\{\mathbf{x} \in \mathbb{R}^{2m}: S_{twist}(f)(\mathbf{x}) > \lambda\big\}\big| &\leq \big|A_\beta(\lambda)^c\big| + \big|\big\{\mathbf{x} \in A_\beta(\lambda) : S_{twist}(f)(\mathbf{x}) > \lambda\big\}\big|.
\end{align*}

For the first term, the fundamental definition of the proxy good set $A_\beta(\lambda)$ guarantees that its complement is controlled by the twisted maximal operator acting on the bad set $E_\beta(\lambda)^c$. By the weak type $(1,1)$ or $L^2$ bounds of $M_{tube}$:
\[
\big|A_\beta(\lambda)^c\big| \leq C \big| E_\beta(\lambda)^c \big| = C \big|\big\{\mathbf{x} \in \mathbb{R}^{2m}: U_f^*(\mathbf{x}) > \lambda\big\}\big|.
\]

For the second term, we apply Chebyshev's inequality and substitute the bound \eqref{eq:total_A_beta_bound}:
\begin{align*}
&\big|\big\{\mathbf{x} \in A_\beta(\lambda) : S_{twist}(f)(\mathbf{x}) > \lambda\big\}\big|
 \leq \frac{1}{\lambda^2} \int_{A_\beta(\lambda)} S_{twist}(f)^2 d\mathbf{x} \\
&\leq \frac{1}{\lambda^2} \left( C \lambda^2 \big|\big\{ \mathbf{x} \in \mathbb{R}^{2m} : U_f^*(\mathbf{x}) > \lambda \big\}\big| + C \int_{\{ U_f^*(\mathbf{x}) \le \lambda \}} U_f^*(\mathbf{x})^2 d\mathbf{x} \right) \\
&= C \big|\big\{ \mathbf{x} \in \mathbb{R}^{2m} : U_f^*(\mathbf{x}) > \lambda \big\}\big| + \frac{C}{\lambda^2} \int_{\{ U_f^*(\mathbf{x}) \le \lambda \}} U_f^*(\mathbf{x})^2 d\mathbf{x}.
\end{align*}

Summing the bounds for the two components yields    the Fefferman--Stein type good-$\lambda$ inequality relating the twisted area function to the non-tangential maximal function:
\begin{align*}
\big|\big\{\mathbf{x} \in \mathbb{R}^{2m}: S_{twist}(f)(\mathbf{x}) > \lambda\big\}\big| &\leq C \big|\big\{\mathbf{x} \in \mathbb{R}^{2m}: U_f^*(\mathbf{x}) > \lambda\big\}\big| + \frac{C}{\lambda^2} \int_{\{U_f^*(\mathbf{x}) \le \lambda\}} U_f^*(\mathbf{x})^{2} \, d\mathbf{x}.
\end{align*}
This completes the proof of Theorem \ref{thm:good_lambda}.
\qed

\section{Application of good-$\lambda$ inequality: maximal function and proof of Theorem \ref{secondmainthm}}\label{sec:4}
In this section, we will prove Theorem \ref{secondmainthm}. As argued in the introduction, combined with Theorem \ref{thm:good_lambda}, it remains to verify the $L^1$-norm of twisted non-tangential maximal function acting on every twisted atom is uniformly bounded.

\subsection{Miscellaneous Hardy spaces}

We recall the definition of the twisted atom.
\begin{defn}[\cite{FLLWW}]\label{def atom}
    Fix positive integers $N_j>0$ for $j=1,2,3$. Let $\Diamond={\rm I,I\!I, I\!I\!I, I\!V}$ or ${\rm V}$ be the type. An {\it atom of type} $\Diamond$ is a $L^2(\mathbb{R}^{2m})$ function $a_{\Omega^\Diamond}$ such that there is an open set $\Omega^\Diamond$ of $\mathbb{R}^{2m}$ of finite measure and $L^2(\mathbb{R}^{2m})$ functions $a^\Diamond_R$, called {\it particles}, and $b_R$ in $\text{Dom}(\triangle^{N_1}_1\triangle^{N_2}_2\triangle^{N_3}_{twist})$ for all maximal tubes of type $\Diamond$, $R\in m^\Diamond(\Omega^\Diamond)$, such that
    \begin{align*}
        &(A1)\,\,a^\Diamond_R=\triangle^{N_1}_1\triangle^{N_2}_2\triangle^{N_3}_{twist} b^\Diamond_R\ \ \text{and}\ \ \text{supp}(b_R)\subset R^*,\quad\text{where}\,\,R^*\,\,\text{is the}\,\sigma-\text{enlargement of}\,\,R;\\
        &(A2)\,\,\text{The sum}\,\,\sum_{R\in m^\Diamond(\Omega^\Diamond)}a^\Diamond_R\,\,\text{converges in}\,\,L^2(\mathbb{R}^{2m})\quad\text{and}\,\,\sum_{R\in m^\Diamond(\Omega^\Diamond)}\|a^\Diamond_R\|^2_{L^2(\mathbb{R}^{2m})}\lesssim\frac{1}{|\Omega^\Diamond|};\\
        &(A3)\,\,a_{\Omega^\Diamond}=\sum_{R\in m^\Diamond(\Omega^\Diamond)}a^\Diamond_R\quad\text{and}\quad \|a_{\Omega^\Diamond}\|_{L^2(\mathbb{R}^{2m})}\lesssim\frac{1}{\sqrt{|\Omega^\Diamond|}}. \end{align*}
        Moreover, let $j=1,2,3$, then for all $0\leq k_j\leq N_j$, we have the cancellation property:
        
        1. for $\Diamond={\rm I}$,
\begin{align}\label{cancel1.1}
\sum_{R\in m^\Diamond(\Omega^\Diamond)}\ell(I_1)^{-4k_1}\ell(I_2)^{-4k_2}\left\|\triangle^{N_1-k_1}_1\triangle^{N_2-k_2}_2\triangle^{N_3}_{twist} b^\Diamond_R\right\|^2_{L^2(\mathbb{R}^{2m})}\lesssim\frac{1}{|\Omega^\Diamond|},
\end{align}
and
\begin{align}\label{cancel1.2}
\sum_{R\in m^\Diamond(\Omega^\Diamond)}\ell(I_1)^{-4k_1}\ell(I_2)^{-4k_2}\ell(I_1)^{-\alpha}\ell(I_2)^{-4N_3+\alpha}\left\|\triangle^{N_1-k_1}_1\triangle^{N_2-k_2}_2 b^\Diamond_R\right\|^2_{L^2(\mathbb{R}^{2m})}\lesssim\frac{1}{|\Omega^\Diamond|}
        \end{align}
for $\alpha\in\{0,1,\ldots, 4N_3\}$; 

\smallskip
        2. for $\Diamond={\rm I\!I, I\!I\!I}$, the cancellation property becomes
  \begin{align}
\sum_{R\in m^\Diamond(\Omega^\Diamond)}\ell(I_1)^{-4k_1}\ell(I_2)^{-4k_3}\left\|\triangle^{N_1-k_1}_1\triangle^{N_2}_2\triangle^{N_3-k_3}_{twist} b^\Diamond_R\right\|^2_{L^2(\mathbb{R}^{2m})}\lesssim\frac{1}{|\Omega^\Diamond|},
        \end{align}      
        and
    \begin{align}
\sum_{R\in m^\Diamond(\Omega^\Diamond)}\ell(I_1)^{-4k_1}\ell(I_1)^{-\alpha}\ell(I_2)^{-4N_2+\alpha}\ell(I_2)^{-4k_3}\left\|\triangle^{N_1-k_1}_1\triangle^{N_3-k_3}_{twist} b^\Diamond_R\right\|^2_{L^2(\mathbb{R}^{2m})}\lesssim\frac{1}{|\Omega^\Diamond|}
        \end{align} 
for all $\alpha\in\{0,1,\ldots, 4N_2\}$;

        \smallskip
 
       3. for $\Diamond={\rm I\!V,V}$,
 \begin{align}
\sum_{R\in m^\Diamond(\Omega^\Diamond)}\ell(I_1)^{-4k_3}\ell(I_2)^{-4k_2}\left\|\triangle^{N_1}_1\triangle^{N_2-k_2}_2\triangle^{N_3-k_3}_{twist} b^\Diamond_R\right\|^2_{L^2(\mathbb{R}^{2m})}\lesssim\frac{1}{|\Omega^\Diamond|},
        \end{align}  
        and
\begin{align}
 \sum_{R\in m^\Diamond(\Omega^\Diamond)}\ell(I_1)^{-\alpha}\ell(I_2)^{-4N_1+\alpha}\ell(I_1)^{-4k_3}\ell(I_2)^{-4k_2}\left\|\triangle^{N_2-k_2}_2\triangle^{N_3-k_3}_{twist} b^\Diamond_R\right\|^2_{L^2(\mathbb{R}^{2m})}\lesssim\frac{1}{|\Omega^\Diamond|}
\end{align}  
for all $\alpha\in\{0,1,\ldots, 4N_1\}$.
\end{defn}
\smallskip

Here is the definition of the twisted atomic Hardy space and it is shown that the twisted atomic Hardy space is equivalent to the twisted area Hardy space.   
\begin{defn}[\cite{FLLWW}]\label{def atom decomposition}
We say that $f\in L^1(\mathbb{R}^{2m})$ admits an (twisted) atomic decomposition if the function $f$ can be written as $\sum_{k\in\mathbb{Z}}\sum_{\Diamond={\rm I,I\!I, I\!I\!I, I\!V, V}}\lambda^\Diamond_k\cdot a^\Diamond_k$, where the sum converges in $L^1(\mathbb{R}^{2m})$ and each $a^\Diamond_k$ is an atom of type $\Diamond$ and $\{\lambda_k\}_{k,\Diamond}\in\ell^1$; We write $f\sim\sum_{\substack{k\in\mathbb{Z}\\\Diamond={\rm I,I\!I, I\!I\!I, I\!V, V}}}\lambda^\Diamond_k\cdot a^\Diamond_k$ to indicate that $\sum_{\substack{k\in\mathbb{Z}\\\Diamond={\rm I,I\!I, I\!I\!I, I\!V, V}}}\lambda^\Diamond_k\cdot a^\Diamond_k$ is an atomic decomposition of $f$.
The (twisted) atomic Hardy space $H^1_{Tw,\,atom}(\mathbb{R}^{2m})$ is defined to be all the $L^1(\mathbb{R}^{2m})$ functions $f$ such that $f$ admits an atomic decomposition with the norm
    \begin{align*}        \|f\|_{H^1_{Tw,\,atom}}:=\inf\bigg\{\sum_{k}|\lambda_k|:\,f\sim\sum_{k\in\mathbb{Z}}\sum_{\Diamond=I,I\!\!I,I\!\!I\!\!I,I\!V,V}\lambda^\Diamond_k\cdot a^\Diamond_k\bigg\}.
    \end{align*}
\end{defn}

Recall also that 
  the {\it twisted Lusin area function} is defined as
\begin{equation}\label{eq:area-function}
   S_{area,  \varphi}(f)(\mathbf{x})=\left( \int_{  \mathbb{R}^3_+}|f* \varphi_{\mathbf{r} } |^2* \chi_{\mathbf{r} }(\mathbf{x}) \frac {d\mathbf{r}}{\mathbf{r}}\right)^{\frac 12},
\end{equation}
where $\varphi_j$'s are Schwartz functions on $\mathbb{R}^m$ which has mean value zero for $j=1,2,3.$

\begin{defn}[\cite{FLLWW}, Area function Hardy space]~\\
The {\it area function Hardy space} $H_{area, \varphi}^
1(\mathbb{R}^{2m})$
    is defined to be the
set  of all $f \in L
^1
(\mathbb{R}^{2m})$ such that $   S_{area,  \varphi}(f) \in L
^1
(\mathbb{R}^{2m})$  with the norm
$
   \|f\|_{H_{area,\boldsymbol\varphi}^
1(\mathbb{R}^{2m})}:=\|S_{area,\varphi}(f)\|_{L
^1
(\mathbb{R}^{2m})}
$. 
\end{defn}

\begin{thm}[\cite{FLLWW}] \label{thm:S-atom}  The two spaces  $H^1_{Tw,atom}( \mathbb{R}^{2m}) $ and $ H^1_{area,\varphi}( \mathbb{R}^{2m})$ coincide and they have equivalent norms.
\end{thm}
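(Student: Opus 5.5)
The statement is the equivalence $H^1_{Tw,atom}(\mathbb{R}^{2m})=H^1_{area,\varphi}(\mathbb{R}^{2m})$ from \cite{FLLWW}. I would prove it by the classical two-sided scheme of Chang--Fefferman, adapted to the five types of dyadic tubes.

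\emph{First direction: atoms are uniformly bounded in $H^1_{area,\varphi}$.} The goal is $\|S_{area,\varphi}(a)\|_{L^1}\le C$ uniformly over atoms $a=a_{\Omega^\Diamond}$ of each type $\Diamond$. Summing this over an atomic decomposition then gives $\|f\|_{H^1_{area,\varphi}}\lesssim\|f\|_{H^1_{Tw,atom}}$.

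Fix $\Diamond$ and let $\widetilde\Omega=\{M_{tube}(\chi_\Omega)>c\}$. By Theorem \ref{thm:maximal}, $|\widetilde\Omega|\lesssim|\Omega|$.

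On $\widetilde\Omega$, I would use Cauchy--Schwarz, the $L^2$ boundedness of $S_{area,\varphi}$ (from Plancherel, exactly as in the $S^{(3)}_{twist}$ computation above), and (A3). This gives
\[
\int_{\widetilde\Omega}S_{area,\varphi}(a)\le|\widetilde\Omega|^{1/2}\|a\|_{L^2}\lesssim 1.
\]

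Off $\widetilde\Omega$, I would write $a=\sum_R a_R$. For each maximal tube $R\in m^\Diamond(\Omega)$, attach the enlargement $\widehat I$ (or $\widehat J$) supplied by Lemma \ref{lem:Journe standard}, Lemma \ref{lem:Journe 1} or Lemma \ref{lem:Journe 2}, according to the type. I would then split $\widetilde\Omega^c$ into the region far from $R$ in the $\widehat I$-direction and the region far from $R$ in the complementary direction. On each piece, $a_R=\triangle_1^{N_1}\triangle_2^{N_2}\triangle_{twist}^{N_3}b_R$ lets me move the derivatives onto $\varphi_{\mathbf r}$. This produces decay of the form $(\ell(I)/\ell(\widehat I))^{\kappa}$ together with a gain in the scale ratio, and the cancellation conditions (the variants with $k_j$ and $\alpha$) control the partially differentiated $b_R$. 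After Cauchy--Schwarz in $R$, the covering lemmas bound $\sum_R|R|(\ell(I)/\ell(\widehat I))^\kappa\lesssim|\Omega|$, and (A2) supplies the matching factor $|\Omega|^{-1}$.

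\emph{Second direction: the atomic decomposition.} Given $f\in L^1$ with $S_{area,\varphi}(f)\in L^1$, I would start from a discrete twisted Calder\'on reproducing formula
\[
f=\sum_{\mathbf j}\sum_{R\in\mathfrak R_{\mathbf j}}|R|\,(\psi_{\mathbf j}*f)(x_R)\,\psi_{\mathbf j}(\cdot-x_R),
\]
with $\psi^{(i)}=\triangle^{N_i}\theta^{(i)}$ compactly supported. I would set $\Omega_k=\{S_{area,\varphi}f>2^k\}$ and
\[
\mathcal B_k=\{R:\ |R\cap\Omega_k|>|R|/2,\ |R\cap\Omega_{k+1}|\le|R|/2\}.
\]
Each $\mathcal B_k$ would be split according to the five types, and each tube assigned to a maximal tube in $m^\Diamond(\widetilde\Omega_k)$. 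Grouping accordingly gives particles $a_R$ and atoms $a_k^\Diamond$ with coefficients $\lambda_k^\Diamond\simeq 2^k|\widetilde\Omega_k|$. Then
\[
\sum_k\lambda_k^\Diamond\lesssim\sum_k 2^k|\Omega_k|\simeq\|S_{area,\varphi}f\|_{L^1}.
\]
The $L^2$ bound $\sum_{R}\|a_R\|^2\lesssim|\widetilde\Omega_k|^{-1}$ would follow by duality against $L^2$ test functions, using the fact that
\[
\int_{\widetilde\Omega_k\setminus\Omega_{k+1}}S_{area,\varphi}(f)^2\lesssim 2^{2k}|\Omega_k|.
\]

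\emph{Main obstacle.} The hardest step is the off-$\widetilde\Omega$ estimate in the first direction for the slanted types II--V. There the decay must be measured along the twisted directions $x_1-x_2$ and $x_2$, and the cross derivatives $\triangle_{twist}$ mix the blocks. I would first reduce to type II by the symmetries $x_1\leftrightarrow x_2$ and by the linear change of variables that maps slanted tubes to rectangles. I would then check that the $\alpha$-weighted cancellation conditions exactly compensate the mismatch between $\ell(I_1)$ and $\ell(I_2)$.
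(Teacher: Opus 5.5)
\paragraph{Gap in the exterior estimate.} This statement is not proved in the paper; it is quoted from \cite{FLLWW}. Your two-sided Chang--Fefferman scheme is the one the paper attributes to that reference, and the paper's proof of Theorem~\ref{thm:M-atom} mirrors the same interior/exterior argument. As you set it up, however, the exterior estimate in your first direction does not close.

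You give each $R$ a single enlargement $\widehat I$. You then split $\widetilde\Omega^c$ into a part far from $R$ in the $\widehat I$-direction and a part far from $R$ only in the complementary direction. Only the first part carries the factor $(\ell(I)/\ell(\widehat I))^\kappa$.

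On the second part (for type~I, $x_1\in 100\widehat I$ and $x_2\notin 100J$), moving derivatives onto $\varphi_{\mathbf r}$ gives only decay in $|x_2-c_J|/\ell(J)$. This integrates to $|R|^{1/2}$ times an $L^2$ norm of a partially differentiated $b_R$, with no factor that becomes small. Cauchy--Schwarz in $R$ then requires $\sum_{R\in m(\Omega)}|R|\lesssim|\Omega|$. That fails already for type~I, which is exactly the product case (Carleson's example). Knowing $\mathbf x\notin\widetilde\Omega$ does not supply the missing factor.

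Reading your ``$\widehat I$ (or $\widehat J$)'' as using the $m_2$-enlargement $\widehat I$ and the $m_1$-enlargement $\widehat J$ independently does not help either. The tube $\widehat I\times\widehat J$ need not lie in $\widetilde\Omega$, so the region $100\widehat I\times 100\widehat J\setminus\widetilde\Omega$ is again left without a summable factor.

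\paragraph{The missing idea: a nested double enlargement.} What is needed is Journé's lemma applied twice, in nested fashion.
\begin{itemize}
\item Let $\widehat I$ be maximal with $\widehat I\times J$ (or $\widehat I\times_t J$, and so on by type) contained in $\widetilde\Omega$.
\item The once-enlarged tubes $S$ are maximal in the first direction in $\widetilde\Omega$. Applying the covering lemma to them relative to $\widetilde\Omega$ gives $\widehat J$ with $\widehat I\times\widehat J\subset\widetilde{\widetilde\Omega}:=\{M_{tube}\chi_{\widetilde\Omega}>c\}$.
\item Do the interior estimate on a fixed dilate of $\widetilde{\widetilde\Omega}$, which still has measure $\lesssim|\Omega|$ by Theorem~\ref{thm:maximal}. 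Do the exterior estimate on its complement, where both regions $(100\widehat I)^c\times\mathbb R$ and $\mathbb R\times(100\widehat J)^c$ carry decay factors.
\item The second sum is controlled because distinct maximal $R$ with the same $S$ are disjoint. Hence $\sum_R|R|(\ell(J)/\ell(\widehat J))^\kappa\le\sum_S|S|(\ell(J)/\ell(\widehat J))^\kappa\lesssim|\widetilde\Omega|\lesssim|\Omega|$.
\end{itemize}
The paper's own Theorem~\ref{thm:M-atom} presupposes this when it removes $100\widehat{I_1}\times 100\widehat{I_2}$ with both sides enlarged.

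\paragraph{A smaller point in the second direction.} The sets $\Omega_k$ and the bound $\int_{\widetilde\Omega_k\setminus\Omega_{k+1}}S_{area,\varphi}(f)^2\lesssim 2^{2k}|\Omega_k|$ control $\varphi_{\mathbf r}*f$, not $\psi_{\mathbf j}*f$. Also, a point-sampled sum is not an exact identity. Instead, start from $f=\int_{\mathbb R^3_+}\psi_{\mathbf r}*\varphi_{\mathbf r}*f\,\frac{d\mathbf r}{\mathbf r}$, with compactly supported $\psi$ satisfying the Calder\'on condition against $\varphi$ as in Theorem~\ref{thm:reproducing}. Then group the $(\mathbf y,\mathbf r)$-integral over the boxes above the tubes of $\mathcal B_k$.
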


We now introduce the twisted Hardy space via the twisted area function $S_{twist}(f)$ in terms of the Poisson integrals as follows.
\begin{defn}
The {\it area function Hardy space} via Poisson integrals $H_{area, Poi}^
1(\mathbb{R}^{2m})$
    is defined to be the
set  of all $f \in L
^1
(\mathbb{R}^{2m})$ such that $   S_{twist}(f) \in L
^1
(\mathbb{R}^{2m})$  with the norm
$
   \|f\|_{H_{area,Poi}^
1(\mathbb{R}^{2m})}:=\|S_{twist}(f)\|_{L
^1
(\mathbb{R}^{2m})}
$. 
\end{defn}

\subsection{Twisted non-tangential characterization of Hardy space}
This section contributes to the proof of Theorem \ref{secondmainthm}.\\
We first note that by Theorem \ref{thm:good_lambda}, we obtain that
for every $f\in H^1_{max}(\mathbb{R}^{2m})$,
\begin{align*}
\|S_{twist}(f)\|_{L^1(\mathbb{R}^{2m})} \lesssim \|U_f^*\|_{L^1(\mathbb{R}^{2m})}.
\end{align*}
Next, we will prove the  estimate that for any $f\in H^1_{Tw,atom}( \mathbb{R}^{2m}) $, 
\begin{align}\label{proof key2}
\|U_f^*\|_{L^1(\mathbb{R}^{2m})}
\lesssim 
\|f\|_{H^1_{Tw,atom}( \mathbb{R}^{2m})}.
\end{align}
And eventually, we argue that 
for each $f\in H_{area,Poi}^
1(\mathbb{R}^{2m})$,
$f$ admits an atomic decomposition $$f=\sum_{\substack{k\in\mathbb{Z}\\\Diamond={\rm I,I\!I, I\!I\!I, I\!V, V}}}\lambda^\Diamond_k\cdot a^\Diamond_k$$ similar to Definition \ref{def atom decomposition} with atoms as in Definition \ref{def atom}, and 
\begin{align}\label{proof key3}
\sum_{\substack{k\in\mathbb{Z}\\\Diamond={\rm I,I\!I, I\!I\!I, I\!V, V}}}|\lambda^\Diamond_k|
\lesssim 
\|f\|_{H^1_{area,Poi}( \mathbb{R}^{2m})}.
\end{align}
Then, the proof of Theorem \ref{secondmainthm} is complete, assuming \eqref{proof key2} and \eqref{proof key3}. 

\smallskip
To prove \eqref{proof key3}, we mainly follow the full details in \cite[Theorem $1.9$]{FLLWW}. The only difference is that we need a reproducing formula involving the Poisson kernel. We now provide such a reproducing formula as follows.

 \begin{thm}\label{thm:reproducing}
    There is a  family of functions  $\{\psi^{(j)}\}^3_{j=1}\subset C^\infty_{0}(\mathbb{R}^m)$ such that each $\psi^{(j)}$ has higher order cancellation property, and that for $ f \in    L
^1(\mathbb{R}^{2m})\cap L
^2
( \mathbb{R}^{2m})$, we have
\begin{equation}\label{eq:reproducing} f(\mathbf{x}) =
\int_{\mathbb{R}^3_+} f*\psi_ {\mathbf{r}}* \partial_{\mathbf{r}}(Poi_{twist})_ {\mathbf{r}}(\mathbf{x})\, \frac { d\mathbf{r}}{\mathbf{r}},\,\,\text{where}\,\, \frac {d\mathbf{r}}{\mathbf{r}}=\frac
{dr_1}{r_1}\frac {dr_2}{r_2}\frac {dr_3}{r_3},
 \end{equation}
 where $\partial_{\mathbf{r}}(Poi_{twist})_ {\mathbf{r}}$ is 
defined by
\begin{align}
    \partial_{\mathbf{r}}(Poi_{twist})_\mathbf{r}(\mathbf{y})=(r_1\partial_{r_1}Poi^{(1)}_{r_1}\otimes r_2\partial_{r_2}Poi^{(2)}_{r_2})*_3 \big(r_3\partial_{r_3}Poi^{(3)}_{r_3}(\mathbf{y})\big).
\end{align}
 \end{thm}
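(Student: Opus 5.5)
The plan is to build the reproducing formula as a Calder\'on-type identity on the Fourier side, one block at a time, using the factorization \eqref{FourierofPoi}. Write $Q^{(j)}_{r_j}:=r_j\partial_{r_j}Poi^{(j)}_{r_j}$. On $\mathbb{R}^m$ its Fourier transform is $-r_j|\xi|e^{-r_j|\xi|}$. The twisted convolution structure gives
\begin{align*}
\widehat{\partial_{\mathbf r}(Poi_{twist})_{\mathbf r}}(\xi_1,\xi_2)=\widehat{Q^{(1)}}(r_1\xi_1)\,\widehat{Q^{(2)}}(r_2\xi_2)\,\widehat{Q^{(3)}}(r_3(\xi_1+\xi_2)),
\end{align*}
where $\widehat{Q}(\eta)=-|\eta|e^{-|\eta|}$.

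I will take $\psi$ in the same twisted form, $\psi=(\psi^{(1)}\otimes\psi^{(2)})*_3\psi^{(3)}$. Then $\widehat{\psi_{\mathbf r}}(\xi_1,\xi_2)=\widehat{\psi^{(1)}}(r_1\xi_1)\widehat{\psi^{(2)}}(r_2\xi_2)\widehat{\psi^{(3)}}(r_3(\xi_1+\xi_2))$. The right side of \eqref{eq:reproducing} therefore has Fourier multiplier
\begin{align*}
\prod_{j=1}^3\int_0^\infty \widehat{\psi^{(j)}}(r\eta_j)\widehat{Q}(r\eta_j)\frac{dr}{r},\qquad \eta_1=\xi_1,\ \eta_2=\xi_2,\ \eta_3=\xi_1+\xi_2,
\end{align*}
by Fubini. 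So the whole problem reduces to the one-parameter task: find $\psi\in C_0^\infty(\mathbb{R}^m)$ with vanishing moments up to a prescribed high order such that
\begin{align*}
\int_0^\infty \widehat{\psi}(r\eta)\widehat{Q}(r\eta)\frac{dr}{r}=1\quad\text{for all }\eta\neq 0.
\end{align*}

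For the one-parameter step, I first choose a radial $\theta\in C_0^\infty(\mathbb{R}^m)$ with all moments up to order $M$ vanishing and $\hat\theta\geq 0$ not identically zero. The standard choice is $\theta=\Delta^{M}\phi*\tilde{\phi}$ with $\phi$ radial, real and compactly supported, which gives $\hat\theta(\eta)=|\eta|^{2M}|\hat\phi(\eta)|^2$ up to a sign. I then set $\psi=-c\,\theta$. The integrand becomes $c\,\hat\theta(r\eta)\,r|\eta|e^{-r|\eta|}$, which is nonnegative. By radiality and dilation invariance of $dr/r$, the integral equals $c\int_0^\infty \hat\theta(s e_1)s e^{-s}\frac{ds}{s}$, a finite positive constant independent of $\eta$. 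Finiteness holds because $\hat\theta(se_1)=O(s^{2M})$ near $0$ and $\hat\theta$ is Schwartz. I normalize $c$ so this constant is $1$. The cancellation of $\psi$ is inherited from the factor $\Delta^M$.

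Finally, I justify the identity for $f\in L^1\cap L^2$. I truncate to $\epsilon<r_j<1/\epsilon$. The truncated operator has multiplier $m_\epsilon(\xi)=\prod_j m^{(j)}_\epsilon(\eta_j)$, with $0\le m^{(j)}_\epsilon\le 1$ and $m^{(j)}_\epsilon\to 1$ pointwise off $\{\eta_j=0\}$. The exceptional sets $\{\xi_1=0\}$, $\{\xi_2=0\}$ and $\{\xi_1+\xi_2=0\}$ are null, so Plancherel and dominated convergence give convergence in $L^2$. This is the sense in which \eqref{eq:reproducing} holds, and it is the sense used in the atomic decomposition of \cite[Theorem 1.9]{FLLWW}. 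I expect the main obstacle to be purely technical: exchanging the $d\mathbf r$ integral with the Fourier transform in the twisted variable $\xi_1+\xi_2$, and making sure the degenerate directions cause no trouble. The null-set observation handles the latter, and the truncation together with Fubini on the truncated region handles the former.
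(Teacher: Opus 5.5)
Your proposal is correct and follows the paper's proof. In both, the multiplier of $\psi_{\mathbf r}*\partial_{\mathbf r}(Poi_{twist})_{\mathbf r}$ factors into three one-parameter Calder\'on integrals evaluated at $\xi_1$, $\xi_2$ and $\xi_1+\xi_2$, and each $\psi^{(j)}$ is chosen to satisfy the one-parameter condition. The paper cites Stein for that choice and simply invokes Plancherel. You instead build $\psi^{(j)}$ explicitly from $\Delta^M\phi*\tilde\phi$ (absorb the sign $(-1)^M$ into $c$) and justify the $L^2$ convergence by truncation and dominated convergence.
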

\begin{proof}
Let $Q^{(j)}_{r_j}(x) = r_j \partial_{r_j} Poi^{(j)}_{r_j}(x)$ denote the scaled derivative of the Euclidean Poisson kernel for each $j \in \{1, 2, 3\}$.

By the standard one-parameter construction (see \cite[Chapter IV, Section 6.19]{St}), for any required cancellation order $N$, there exists a radial function $\psi^{(j)} \in C_c^\infty(\mathbb{R}^m)$ supported in the unit ball $B(0,1)$ with vanishing moments up to order $N$, such that the continuous Calder\'{o}n condition is satisfied in the frequency domain:
\begin{equation}\label{eq:1-param-reproducing}
    \int_0^\infty \widehat{Q^{(j)}_{r_j}}(\xi) \cdot \widehat{\psi^{(j)}}(r_j\xi) \, \frac{dr_j}{r_j} = 1 \quad \text{for all } \xi \neq 0.
\end{equation}
Define the smooth function $\psi_{\mathbf{r}}$  by
\begin{align*}
    \psi_{\mathbf{r}}(\mathbf{x}) = \left(\psi^{(1)}_{r_1} \otimes \psi^{(2)}_{r_2}\right) *_3 \psi^{(3)}_{r_3}(\mathbf{x}).
\end{align*}

Let $K_{\mathbf{r}} = \psi_{\mathbf{r}} * \partial_{\mathbf{r}}Poi_{\mathbf{r}}$ be the convolution kernel at scale $\mathbf{r}$. Then 
\begin{align*}
    \mathcal{F}_{\mathbf{x}} \left( \int_{\mathbb{R}^3_+} f * K_{\mathbf{r}} \, \frac{d\mathbf{r}}{\mathbf{r}} \right) (\xi_1, \xi_2) = \widehat{f}(\xi_1, \xi_2) \int_{\mathbb{R}^3_+} \widehat{K_{\mathbf{r}}}(\xi_1, \xi_2) \, \frac{dr_1}{r_1} \frac{dr_2}{r_2} \frac{dr_3}{r_3}.
\end{align*}
and remark that, from \eqref{eq:1-param-reproducing} and the Fourier transform,
\begin{align*}
    \int_{\mathbb{R}^3_+} \widehat{K_{\mathbf{r}}}(\xi_1, \xi_2) \, \frac{d\mathbf{r}}{\mathbf{r}} 
    &= \left(\prod^2_{i=1} \int_0^\infty \widehat{Q^{(i)}_{r_i}}(\xi_i) \widehat{\psi^{(i)}}(r_i\xi_i) \, \frac{dr_i}{r_i} \right) \\
    &\quad \times \left( \int_0^\infty \widehat{Q^{(3)}_{r_3}}(\xi_1+\xi_2) \widehat{\psi^{(3)}}(r_3(\xi_1+\xi_2)) \, \frac{dr_3}{r_3} \right),
\end{align*}
for almost every $(\xi_1, \xi_2) \in \mathbb{R}^{2m}$. Therefore, the Plancherel theorem gives the desired formula. 
\end{proof}

\smallskip
To establish \eqref{proof key2}, it suffices to prove the following argument.
\begin{thm} \label{thm:M-atom}  
There exists a positive constant $C$ such that for every atom  $a_{\Omega^\Diamond}$, 
 \begin{align}
\|U_{a_{\Omega^\Diamond}}^*\|_{L^1( \mathbb{R}^{2m})}\leq C.
\end{align}
\end{thm}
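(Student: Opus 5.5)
We follow the classical Chang--Fefferman/Journé strategy, adapted to the five tube types. Fix an atom $a=a_{\Omega^\Diamond}=\sum_{R\in m^\Diamond(\Omega^\Diamond)}a_R^\Diamond$. We treat type $\Diamond={\rm I\!I}$ in detail; types ${\rm I}$, ${\rm I\!I\!I}$, ${\rm I\!V}$ and ${\rm V}$ are handled identically, using Lemma \ref{lem:Journe standard} and Lemma \ref{lem:Journe 2} in place of Lemma \ref{lem:Journe 1}.

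\emph{Step 1: the local part.} Set $\widetilde\Omega:=\{\mathbf{x}: M_{tube}(\chi_{\Omega})(\mathbf{x})>1/2\}$ and iterate this once more to get $\widetilde{\widetilde\Omega}$, so that $|\widetilde{\widetilde\Omega}|\lesssim|\Omega|$ by Theorem \ref{thm:maximal}. By Lemma \ref{lem:Poisson-tube-max} we have $U_a^*\le C_0 M_{tube}(a)$ pointwise. Hence Cauchy--Schwarz, the $L^2$ boundedness of $M_{tube}$ and (A3) give
\[
\int_{\widetilde{\widetilde\Omega}}U_a^*\le |\widetilde{\widetilde\Omega}|^{1/2}\|M_{tube}a\|_{L^2}\lesssim |\Omega|^{1/2}|\Omega|^{-1/2}=1 .
\]

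\emph{Step 2: the far part, particle by particle.} Off $\widetilde{\widetilde\Omega}$ we estimate $\sum_R\int_{(\widetilde{\widetilde\Omega})^c}U^*_{a_R}$. For each $R=I\times_tJ$ we choose the enlargements $\widehat I,\widehat J$ supplied by Lemma \ref{lem:Journe 1} (its two statements for $m^{\rm I\!I}_2$ and $m^{\rm I\!I}_1$). Let $\widehat R_1=\widehat I\times_t J$ and $\widehat R_2=I\times_t\widehat J$ (suitably dilated), chosen so that $\widehat R_1\cup \widehat R_2\subset\widetilde{\widetilde\Omega}$. The complement of $\widehat R_1\cup\widehat R_2$ splits into the region far from $R$ in the first twisted direction ($x_1-x_2$), the region far in the second direction ($x_2$), and the region far in both. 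On each region we write $a_R=\Delta_1^{N_1}\Delta_2^{N_2}\Delta_{twist}^{N_3}b_R$ and move the derivatives onto the kernel $(Poi_{twist})_{\mathbf{r}}$. The Fourier symbol \eqref{FourierofPoi} factors, so this produces factors $r_1^{-2N_1}$, $r_2^{-2N_2}$, $r_3^{-2N_3}$ acting on derivatives of Poisson kernels. Using $\mathrm{supp}\,b_R\subset R^*$ and the decay of these derivatives, we obtain on the doubly far region
\[
U^*_{a_R}(\mathbf{x})\lesssim \frac{\ell(I)^{\delta}\ell(J)^{\delta}}{\mathrm{dist}_1(\mathbf{x},R)^{m+\delta}\,\mathrm{dist}_2(\mathbf{x},R)^{m+\delta}}\,|R|^{1/2}\|\Delta_1^{N_1}\Delta_2^{N_2}\Delta_{twist}^{N_3}\text{-partials of }b_R\|_{L^2},
\]
with $\delta=1$ coming from the Poisson decay. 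On the singly far regions we combine $L^2$ control in the near variable with decay in the far variable. Integrating over the region outside $\widehat R_1$ yields a factor $(\ell(I)/\ell(\widehat I))^{\kappa}$, and the cancellation conditions (the inequalities with the weights $\ell(I_1)^{-4k_1}\cdots$ and the $\alpha$-interpolated ones) bound the $L^2$ norms of the partially differentiated $b_R$. Summing over $R$ with Cauchy--Schwarz,
\[
\sum_R |R|^{1/2}\Big(\frac{\ell(I)}{\ell(\widehat I)}\Big)^{\kappa}\|\cdot\|_{L^2}\le\Big(\sum_R|R|\Big(\frac{\ell(I)}{\ell(\widehat I)}\Big)^{2\kappa}\Big)^{1/2}\Big(\sum_R\|\cdot\|^2\Big)^{1/2}\lesssim |\Omega|^{1/2}|\Omega|^{-1/2},
\]
by Lemma \ref{lem:Journe 1} and (A2) together with the cancellation bounds.

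\emph{Main obstacle.} The hardest point is the interaction of the non-tangential supremum with the twisted third parameter. The cone $\Gamma^\beta(\mathbf{x})$ allows $r_3$ to be large relative to $\ell(I),\ell(J)$, and $\Delta_{twist}$ mixes the two directions. The resulting kernel decay is then in the tube geometry $T(\mathbf{x},\mathbf{r})$, which changes type as $r_3$ crosses $r_1$ or $r_2$, rather than in product form. To handle this we would first split the supremum according to which of the three cases in \eqref{eq:tube-volume} $\mathbf{r}$ falls into. In each case we would dominate the differentiated kernel by the dyadic sum \eqref{eq:1d-poisson-dyadic} (as in the proof of Lemma \ref{lem:Poisson-tube-max}), with extra gains $(\ell/2^{\nu}r)^{\delta}$ supplied by the moments; these are exactly what the $\alpha$-indexed cancellation conditions are designed to provide. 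The Poisson kernel decays only to order $m+1$, so each variable gains just $\delta<1$ and $\kappa$ must be taken small. This is harmless, since Lemma \ref{lem:Journe 1} holds for every $\kappa>0$.
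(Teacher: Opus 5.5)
Your local part and the bookkeeping of the far part follow the paper: particle-by-particle estimates, Journ\'e enlargements, and Cauchy--Schwarz over $R$ against (A2) and the cancellation conditions. The gap is the step that carries all of the twisted difficulty. The pointwise product bound you display on the doubly far region is false for the twisted cone. Take type ${\rm I\!I}$, let $\mathbf c=(c_1,c_2)$ be the centre of $R=I\times_t J$, and let $\mathbf x=(x_1,c_2+s)$ with $|x_1-c_1|\le\ell(J)$ and $|s|\ge 100\,\ell(J)$. Then $x_1-x_2$ and $x_2$ are at distance $\simeq|s|$ from $I$ and $J$, so $\mathbf x$ lies in your doubly far region. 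For $\mathbf r=(\epsilon,\rho,\epsilon/2)$ with $\rho=2|s|/\beta$ we have $r_1,r_2\ge r_3$. Hence $T(\mathbf x,\beta\mathbf r)=B(x_1,\beta\epsilon)\times B(x_2,2|s|)$ contains $(x_1,c_2)$, and as $\epsilon\to0$ the kernel $(Poi_{twist})_{\mathbf r}$ collapses to $\delta_0(z_1)\otimes Poi_\rho(z_2)$. So in the limit $U^*_{a_R}(\mathbf x)$ controls $|G_s(x_1)|$, where $G_s(y_1)=\int a_R(y_1,c_2-z_2)\,Poi_\rho(z_2)\,dz_2$ is an integral of $a_R$ over the $m$-plane $\{z_1=y_1\}$, which meets $R^*$.

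Only $\triangle_2^{N_2}$ is tangential to this plane. Moving $\triangle_1^{N_1}$ or $\triangle_{twist}^{N_3}$ onto the kernel costs $\epsilon^{-2N_1}$ or $\epsilon^{-2N_3}$. Your factors $r_j^{-2N_j}$ are gains only when $r_j$, or the lifted separation $|\zeta_j|$ from the proof of Lemma~\ref{lem:Poisson-tube-max}, is large. Thus $U^*_{a_R}$ is governed here by plane integrals of $\triangle_1^{N_1}\triangle_{twist}^{N_3}b_R$. These lie in $L^2(dy_1)$ but are not bounded pointwise by any of the $L^2(\mathbb R^{2m})$ quantities in (A2) or the cancellation conditions. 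The decay in $|s|$ comes from $\triangle_2^{N_2}$ alone. The same degeneration occurs for every type whenever two of $r_1,r_2,r_3$ tend to $0$: the kernel then concentrates on $\{z_1=z_2\}$, $\{z_1=0\}$ or $\{z_2=0\}$.

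Your ``Main obstacle'' paragraph names the right issue but only outlines a remedy, and as outlined it would not close. Dominating the differentiated kernel by the dyadic sums from the proof of Lemma~\ref{lem:Poisson-tube-max} yields tube averages of $|b_R|$ or its derivatives, which is an $M_{tube}$-type bound. On the slab above, $M_{tube}(\chi_R)\gtrsim(\ell(I)/|s|)^m$, which is not integrable in $s$. Integrability returns only if moment gains are attached to exactly the parameters that are large in each configuration. The actual content of the proof is deciding, as a function of $\mathbf r$ and the position of $\mathbf x$, which Laplacians stay on $b_R$ and which move to the kernel, and that is not done.

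What is needed is an $L^1$ estimate rather than a pointwise one. On slabs such as the one above, bound $U^*_{a_R}$ by a lower-dimensional maximal function of the plane integrals and apply Cauchy--Schwarz over the slab. Then extract the $|s|$-decay from $\triangle_2^{N_2}$ via the $\alpha$-indexed cancellation conditions, and check that the resulting factor is controlled by the Journ\'e factor. The paper's type ${\rm I\!I}$ argument aims at exactly this. It keeps $\triangle_{twist}^{N_3}$ on $b_R$, uses that $*_3$ preserves the support in $u_1-u_2$ while spreading $u_2$, and decomposes into slanted annuli $S_k(i)$, paying for the spread with the decay of $Poi^{(3)}_{r_3}$.

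Finally, your claim that each variable gains only $\delta<1$ is incorrect. Wherever transferring $\triangle^{N}$ is legitimate, $|\triangle^{N}Poi_r(v)|\lesssim r\,(r+|v|)^{-m-1-2N}$, so the non-tangential supremum decays like $\ell^{2N}\mathrm{dist}^{-m-2N}$. The order-$(m+1)$ tail limits only the factors you leave undifferentiated.
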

\begin{proof}
To prove the uniform $L^1$ boundedness of the twisted maximal function on atoms, we mirror the structural decomposition used for the area function in \cite{FLLWW}. Here is a sketch of the proof. Let $\Diamond \in \{{\rm I, I\!I, I\!I\!I, I\!V, V}\}$ be the geometric type of the tubes, and let $a = a_{\Omega^\Diamond}$ be an atom of type $\Diamond$ supported on the open set $\Omega^\Diamond$ of finite measure. 

By definition, we can decompose the atom into its constituent particles:
\begin{align*}
    a_{\Omega^\Diamond} = \sum_{R \in m^\Diamond(\Omega^\Diamond)} a_R,
\end{align*}
where each $a_R = \triangle^{N_1}_1\triangle^{N_2}_2\triangle^{N_3}_{twist} b_R$, and $\text{supp}(b_R) \subset R^*$, the $\sigma$-enlargement of the maximal tube $R$.

We define the enlarged set $\widetilde{\Omega}^\Diamond = \{ \mathbf{x} \in \mathbb{R}^{2m} : M_{tube}^\Diamond(\chi_{\Omega^\Diamond})(\mathbf{x}) > c \}$ for an appropriately small constant $c$, ensuring $|\widetilde{\Omega}^\Diamond| \lesssim |\Omega^\Diamond|$. For each $R \in m^\Diamond(\Omega^\Diamond)$, we associate a similarly enlarged tube $\widehat{R}$ such that $R^* \subset \widehat{R} \subset \widetilde{\Omega}^\Diamond$, governed by the twisted covering lemma.

We split the $L^1$ norm of $U_{a_{\Omega^\Diamond}}^*$ into the local (interior) and global (exterior) parts and dominate the local part via the $L^2$-mapping property of $U^*$. As a consequence,
\begin{align}\label{eq:U_star_split}
    \| U_{a_{\Omega^\Diamond}}^* \|_{L^1(\mathbb{R}^{2m})} &\le \int_{\widetilde{\Omega}^\Diamond} U_{a_{\Omega^\Diamond}}^*(\mathbf{x}) \, d\mathbf{x} + \int_{(\widetilde{\Omega}^\Diamond)^c} U_{a_{\Omega^\Diamond}}^*(\mathbf{x}) \, d\mathbf{x}\notag\\
    &\leq\big|\widetilde{\Omega}^\Diamond\big|^{1/2} \| U_{a_{\Omega^\Diamond}}^* \|_{L^2(\mathbb{R}^{2m})}  + \int_{(\widetilde{\Omega}^\Diamond)^c} U_{a_{\Omega^\Diamond}}^*(\mathbf{x}) \, d\mathbf{x}\notag
    \\
    &\lesssim |\Omega^\Diamond|^{1/2} \| a_{\Omega^\Diamond} \|_{L^2(\mathbb{R}^{2m})}+  \int_{(\widetilde{\Omega}^\Diamond)^c} U_{a_{\Omega^\Diamond}}^*(\mathbf{x}) \, d\mathbf{x}\notag\\
    &\lesssim1+  \int_{(\widetilde{\Omega}^\Diamond)^c} U_{a_{\Omega^\Diamond}}^*(\mathbf{x}) \, d\mathbf{x},
\end{align}
where the last inequality follows from the size condition $(A3)$ of the atom. Therefore, it suffices to show the uniform bound for the exterior part, that is, 
\begin{align}\label{GOAL}
    \int_{(\widetilde{\Omega}^\Diamond)^c} U_{a_{\Omega^\Diamond}}^*(\mathbf{x}) \, d\mathbf{x} \lesssim1.
\end{align}
\noindent\textbf{$\circledast$ The exterior estimate}\\
We decompose the exterior integral and get
\begin{align*}
    \int_{(\widetilde{\Omega}^\Diamond)^c} U_{a_{\Omega^\Diamond}}^*(\mathbf{x}) \, d\mathbf{x} &\le \sum_{R \in m^\Diamond(\Omega^\Diamond)} \int_{(\widehat{R})^c} U_{a_R}^*(\mathbf{x}) \, d\mathbf{x}.
\end{align*}
To bound this sum uniformly by a constant, we leverage the cancellation conditions $(A1)$ of the particles to extract polynomial decay from the twisted Poisson kernel $(Poi_{twist})_{\mathbf{r}}$.

Remark that the twisted maximal function of a single particle is, by definition,
\begin{align*}
    U_{a_R}^*(\mathbf{x}) = \sup_{(\mathbf{y}, \mathbf{r}) \in \Gamma(\mathbf{x})} \big| a_R * (Poi_{twist})_{\mathbf{r}}(\mathbf{y}) \big|,
\end{align*}
and $a_R = \triangle^{N_1}_1\triangle^{N_2}_2\triangle^{N_3}_{twist} b_R$, then we can apply the property of the bump function via integration by parts to obtain
\begin{align*}
    a_R * (Poi_{twist})_{\mathbf{r}}(\mathbf{y}) &= b_R * \Big( \triangle^{N_1}_1\triangle^{N_2}_2\triangle^{N_3}_{twist} (Poi_{twist})_{\mathbf{r}} \Big)(\mathbf{y}).
\end{align*}
Applying these spatial derivatives generates polynomial decay away from the center of the kernel. The analysis of this decay diverges depending on the type $\Diamond$.
\smallskip

$\clubsuit$ \textbf{Case} $\Diamond = {\rm I}$:

For type ${\rm I}$ tubes, the base is a standard Cartesian rectangle $R = I_1 \times I_2 \in m^{\rm I}(\Omega^{\rm I})$. To isolate the decay across the independent coordinate directions, we partition the exterior integration domain $(\widehat{R})^c = (100\widehat{I_1} \times 100\widehat{I_2})^c$ into four mutually disjoint regions based on their separation from the support of the particle $b_R$:
\begin{align*}
    \int_{(\widehat{R})^c} U_{a_R}^*(\mathbf{x}) \, d\mathbf{x} &\le \int_{(100\widehat{I_1})^c \times 100I_2} U_{a_R}^*(\mathbf{x}) \, d\mathbf{x} + \int_{(100\widehat{I_1})^c \times (100I_2)^c} U_{a_R}^*(\mathbf{x}) \, d\mathbf{x} \\
    &\quad + \int_{(100I_1)^c \times (100\widehat{I_2})^c} U_{a_R}^*(\mathbf{x}) \, d\mathbf{x} + \int_{100I_1 \times (100\widehat{I_2})^c} U_{a_R}^*(\mathbf{x}) \, d\mathbf{x} \\
    &=: (\mathcal{A}) + (\mathcal{B}) + (\mathcal{C}) + (\mathcal{D}).
\end{align*}

 Since  $(Poi_{twist})_{\mathbf{r}}$ acts independently in the $y_1$ and $y_2$ coordinates, applying $\triangle_j^{N_j}$ to the kernel generates a pointwise spatial decay of $O\left( \frac{r_j^{N_j}}{(r_j + |y_j|)^{m+2N_j}} \right)$ for each $j=1,2$. 
 
For any $\mathbf{x}$ outside the enlarged dyadic tubes, the non-tangential condition $(\mathbf{y}, \mathbf{r}) \in \Gamma(\mathbf{x})$ restricts $|x_j - y_j| \le r_j$. Consequently, for any $u_j \in I_j$ (the support of $b_R$), the triangle inequality guarantees $r_j + |y_j - u_j| \simeq |x_j - c_{I_j}|$. Thus, shifting $\triangle_j^{N_j}$ onto the kernel yields the uniform pointwise decay:
\begin{equation}\label{eq:poisson_decay}
    \sup_{(\mathbf{y}, \mathbf{r}) \in \Gamma(\mathbf{x})} \left| \left( \triangle_j^{N_j} Poi^{(j)}_{r_j} \right)(y_j - u_j) \right| \lesssim \frac{\ell(I_j)^{2N_j}}{|x_j - c_{I_j}|^{m+2N_j}}.
\end{equation}

\medskip
\noindent\textbf{Estimate for region $(\mathcal{D})$: }\\
In region $(\mathcal{D})$, we integrate over $100I_1 \times (100\widehat{I_2})^c$. Since $x_2$ is far from the support, we shift $\triangle_2^{N_2}$ onto the kernel and use the decay from \eqref{eq:poisson_decay}. The remaining operators stay on the particle, leaving $\triangle_1^{N_1}\triangle_{twist}^{N_3} b_R$. 

We apply the Cauchy--Schwarz inequality to the $x_1$ integration over the local domain $100I_1$ to get
\begin{align*}
    (\mathcal{D})
    &\le |100I_1|^{1/2} \int_{(100\widehat{I_2})^c} \left( \int_{100I_1} |U_{a_R}^*(x_1, x_2)|^2 \, dx_1 \right)^{1/2} dx_2 \\
    &\lesssim |I_1|^{1/2} \int_{(100\widehat{I_2})^c} \frac{\ell(I_2)^{2N_2}}{|x_2 - c_{I_2}|^{m+2N_2}} \big\| \triangle_1^{N_1}\triangle_{twist}^{N_3} b_R \big\|_{L^2(\mathbb{R}^{2m})} \, dx_2.
\end{align*}

To evaluate the $x_2$ integral, we partition $(100\widehat{I_2})^c$ into concentric dyadic annuli $E_j = \delta_{2^j}(100I_2) \setminus \delta_{2^{j-1}}(100I_2)$. Since the integration begins outside $\widehat{I_2}$, the summation starts at $j_0 = \log_2(\ell(\widehat{I_2})/\ell(I_2))$. Summing the geometric series yields:
\begin{align*}
    \int_{(100\widehat{I_2})^c} \frac{\ell(I_2)^{2N_2}}{|x_2 - c_{I_2}|^{m+2N_2}} \, dx_2 &= \sum_{j=j_0}^\infty \int_{E_j} \frac{\ell(I_2)^{2N_2}}{|x_2 - c_{I_2}|^{m+2N_2}} \, dx_2 \\
    &\lesssim \sum_{j=j_0}^\infty \big( 2^j \ell(I_2) \big)^m \frac{\ell(I_2)^{2N_2}}{(2^j \ell(I_2))^{m+2N_2}} \\
   &\simeq\ell(I_2)^{m/2} \left( \frac{\ell(I_2)}{\ell(\widehat{I_2})} \right)^{2N_2}.
\end{align*}
Substituting this back into the bound for $(\mathcal{D})$, and combining $|I_1|^{1/2} \ell(I_2)^{m/2} \simeq |R|^{1/2}$, we obtain:
\begin{align*}
    (\mathcal{D}) \lesssim |R|^{1/2} \left( \frac{\ell(I_2)}{\ell(\widehat{I_2})} \right)^{2N_2} \big\| \triangle_1^{N_1}\triangle_{twist}^{N_3} b_R \big\|_{L^2(\mathbb{R}^{2m})}.
\end{align*}

We now sum this estimate over all type ${\rm I}$ rectangles. Applying the Cauchy--Schwarz inequality over the sum $\sum_{R}$, and utilizing the standard covering lemma alongside the specific atomic cancellation condition \eqref{cancel1.1}, we have that
\begin{align*}
    \sum_{R \in m^{\rm I}(\Omega^{\rm I})} (\mathcal{D}) &\le \left( \sum_{R} |R| \left( \frac{\ell(I_2)}{\ell(\widehat{I_2})} \right)^{4N_2} \right)^{1/2} \left( \sum_{R} \big\| \triangle_1^{N_1}\triangle_{twist}^{N_3} b_R \big\|_{L^2(\mathbb{R}^{2m})}^2 \right)^{1/2} \\
    &\lesssim \big( C |\Omega^{\rm I}| \big)^{1/2} \cdot \left( \frac{1}{|\Omega^{\rm I}|} \right)^{1/2} \\
    &= C.
\end{align*}

\medskip
\noindent\textbf{Estimate for region $(\mathcal{A})$: }\\
Region $(\mathcal{A})$ is geometrically symmetric to $(\mathcal{D})$. Estimates on this region can be made using a similar approach used in region $(\mathcal{D})$.

\medskip
\noindent\textbf{Estimate for regions $(\mathcal{B})$ and $(\mathcal{C})$:}\\
In regions $(\mathcal{B})$ and $(\mathcal{C})$, the spatial variable $\mathbf{x}$ is far from the support $R$ in \textit{both} coordinate blocks simultaneously.

We only estimate the integral over $(\mathcal{B})$. The estimate of the integral on the region $(\mathcal{C})$ is parallel.
For this region $(\mathcal{B}) = (100\widehat{I_1})^c \times (100I_2)^c$, we shift both $\triangle_1^{N_1}$ and $\triangle_2^{N_2}$ onto the kernel. This yields simultaneous pointwise decay in both variables:
\begin{align*}
    U_{a_R}^*(x_1, x_2) \lesssim \frac{\ell(I_1)^{2N_1}}{|x_1 - c_{I_1}|^{m+2N_1}} \frac{\ell(I_2)^{2N_2}}{|x_2 - c_{I_2}|^{m+2N_2}} \big\| \triangle_{twist}^{N_3} b_R \big\|_{L^1(\mathbb{R}^{2m})}.
\end{align*}
By H\"older's inequality on the compact support of the atom, $$\| \triangle_{twist}^{N_3} b_R \|_{L^1(\mathbb{R}^{2m})} \le |R|^{1/2} \| \triangle_{twist}^{N_3} b_R \|_{L^2(\mathbb{R}^{2m})}.$$
We integrate this pointwise bound  over $x_1 \in (100\widehat{I_1})^c$ and $x_2 \in (100I_2)^c$:
\begin{align*}
    (\mathcal{B}) &\lesssim |R|^{1/2} \big\| \triangle_{twist}^{N_3} b_R \big\|_{L^2(\mathbb{R}^{2m})} \left( \sum_{i=i_0}^\infty 2^{-2i N_1} \ell(I_1)^{m/2} \right) \left( \sum_{j=0}^\infty 2^{-2j N_2} \ell(I_2)^{m/2} \right) \\
    &\simeq |R| \left( \frac{\ell(I_1)}{\ell(\widehat{I_1})} \right)^{2N_1}  \cdot \big\| \triangle_{twist}^{N_3} b_R \big\|_{L^2(\mathbb{R}^{2m})}.
\end{align*}
To apply the covering lemma and finish the estimate, we apply Cauchy-Schwarz inequality over the sum of $R$ and apply the size condition of the atom. 
\smallskip

$\clubsuit$ \textbf{Case $\Diamond = {\rm I\!I}$ :}

For type ${\rm I\!I}$ tubes, the atom is supported on the $\sigma$-enlargement of a maximal slanted rectangle $R = I_1 \times_t I_2 \in m^{\rm I\!I}(\Omega^{\rm I\!I})$, where the geometry enforces $|I_1| \le |I_2|$. Following the twisted covering lemma, we split the sum over the particles into Case 1 and Case 2 based on the relative size of the overlapping tubes. (see \cite[Chapter $6$]{FLLWW}).

Similar to the type $\rm I$ case, we estimate the exterior part by further decomposing $(\widehat{R})^c$ into the sum of four parts, $(\mathfrak{A})$, $(\mathfrak{B})$, $(\mathfrak{C})$ and $(\mathfrak{D})$.

The most analytically demanding region occurs in the exterior estimate when both twisted coordinates are far from the support of the atom. We isolate the integration over the twisted complement $\mathfrak{C} := (100I_1)^c \times_t (100\widehat{I_2})^c$. 
To execute the bound, we partition $\mathfrak{C}$ into dyadic slanted annuli sets:
\begin{align*}
    A_{i,j} := \Big\{ \mathbf{x} \in \mathbb{R}^{2m} : |x_1 - x_2 - c_{I_1}| \simeq 2^i \ell_1, \,\, |x_2 - c_{I_2}| \simeq 2^j \ell_2 \Big\},
\end{align*}
where the exterior conditions force the summation indices to be $i \ge 6$ and $j \ge j_0+l$ (with $2^{j_0m} \simeq |\widehat{I_2}|/|I_2|$).

We bound the integral $\sum_{i,j} \int_{A_{i,j}} U_{a_R}^*(\mathbf{x}) d\mathbf{x}$. We unpack the non-tangential maximal function of the particle:
\begin{align*}
    U_{a_R}^*(\mathbf{x}) &= \sup_{(\mathbf{y}, \mathbf{r}) \in \Gamma(\mathbf{x})} \big| a_R * (Poi_{twist})_{\mathbf{r}}(\mathbf{y}) \big|.
\end{align*}
Since the particle is defined as $a_R = \triangle_1^{N_1} \triangle_2^{N_2} \triangle_{twist}^{N_3} b_R$, we can shift the Cartesian derivatives $\triangle_1^{N_1}$ and $\triangle_2^{N_2}$ directly onto the corresponding Euclidean blocks of the Poisson kernel via integration by parts. However, shifting the twisted derivative $\triangle_{twist}^{N_3}$ onto the kernel breaks the symmetry. Thus, we leave $\triangle_{twist}^{N_3}$ on the function $b_R$ and evaluate the twisted convolution $*_3$ associated with the fiber parameter $r_3$.

Let $\Psi_{r_3}(\mathbf{u}) := \big( \triangle_{twist}^{N_3} b_R *_3 Poi^{(3)}_{r_3} \big)(\mathbf{u})$. The core difficulty is that the twisted convolution $*_3$ destroyed  the compact support of $b_R$ along the diagonal direction $\mathbf{u} = (u_1 - v, u_2 - v)$. 
While the twisted difference $u_1 - u_2$ is preserved (meaning that $|u_1 - u_2 - c_{I_1}| \le \ell_1$ holds    for the support of $\Psi_{r_3}$), the second coordinate $u_2$ is no longer compactly supported on $I_2$. 

To bypass this geometric mismatch between the  atom and the Cartesian bounds of the remaining Poisson kernel, we deploy the {\it Molecule decomposition technique}. We partition the spatial domain $\mathbb{R}^{2m}$ of the inner convolution into slanted annuli sets $S_k(i)$ defined by the twisted metrics:
\begin{align*}
    S_k(i) := \Big\{ \mathbf{u} \in \mathbb{R}^{2m} : |u_1 - u_2 - c_{I_1}| \le \frac{|x_1 - x_2 - c_{I_1}|}{2}, \,\, |u_2 - c_{I_2}| \simeq 2^{k+i} \ell_2 \Big\}.
\end{align*}

By isolating the convolution within these slanted annuli, we write the maximal function bound as:
\begin{align*}
    U_{a_R}^*(\mathbf{x}) \le \sup_{(\mathbf{y}, \mathbf{r}) \in \Gamma(\mathbf{x})} \sum_{k=0}^\infty \Bigg| \int_{S_k(i)} \Psi_{r_3}(\mathbf{u}) \cdot \Big( \triangle_1^{N_1} Poi^{(1)}_{r_1}(y_1 - u_1) \triangle_2^{N_2} Poi^{(2)}_{r_2}(y_2 - u_2) \Big) \, d\mathbf{u} \Bigg|.
\end{align*}

We now extract the pointwise spatial decay. Since we are taking the supremum over $(\mathbf{y}, \mathbf{r}) \in \Gamma(\mathbf{x})$, the cone condition ensures that the spatial separation is controlled by $\mathbf{x}$. 
For $\mathbf{x} \in A_{i,j}$ and $\mathbf{u} \in S_k(i)$, the distance in the second coordinate is bounded below by the dominant annulus:
\begin{align*}
    |y_2 - u_2| \ge |u_2 - c_{I_2}| - |x_2 - y_2| - |x_2 - c_{I_2}| \simeq 2^{k+i} \ell_2.
\end{align*}
Similarly, by the triangle inequality and the preservation of the twisted metric on $S_k(i)$:
\begin{align*}
    |y_1 - u_1| \ge |y_2 - u_2| - \big| (y_1 - y_2 - c_{I_1}) - (u_1 - u_2 - c_{I_1}) \big| \simeq 2^{k+i} \ell_2.
\end{align*}
Applying $\triangle_j^{N_j}$ to the standard Poisson kernel generates a function whose supremum over $r_j > 0$ yields spatial polynomial decay:
\begin{align*}
    \sup_{r_j > 0} \left| \triangle_j^{N_j} Poi^{(j)}_{r_j}(y_j - u_j) \right| \lesssim \sup_{r_j > 0} \frac{r_j^{N_j}}{(r_j + |y_j - u_j|)^{m+2N_j}} \lesssim \frac{1}{|y_j - u_j|^{m+N_j}}.
\end{align*}
Thus, evaluating the Poisson kernels for $\mathbf{u} \in S_k(i)$ guarantees the joint decay factor:
\begin{align*}
    \big| \triangle_1^{N_1} Poi^{(1)}_{r_1}(y_1 - u_1) \triangle_2^{N_2} Poi^{(2)}_{r_2}(y_2 - u_2) \big| \lesssim \frac{1}{(2^{k+i}\ell_2)^{2m+N_1+N_2}}.
\end{align*}

We substitute this decay into the integral over $S_k(i)$ and apply the Cauchy--Schwarz inequality. We bound the $L^2$ norm of the function $\Psi_{r_3}$ on $S_k(i)$ by recognizing that $|u_2 - c_{I_2}| \simeq 2^{k+i}\ell_2$ forces the integration variable of the $*_3$ convolution to be far from the origin, bringing out the decay of $Poi^{(3)}_{r_3}$:
\begin{align*}
    \int_{S_k(i)} |\Psi_{r_3}(\mathbf{u})| \, d\mathbf{u} &\le |S_k(i)|^{1/2} \| \Psi_{r_3} \|_{L^2(S_k(i))} \\
    &\lesssim \big( \ell_1 (2^{k+i}\ell_2)^{m-1} \big)^{1/2} \frac{1}{(2^{k+i}\ell_2)^{N_3}} \| \triangle_{twist}^{N_3} b_R \|_{L^2(\mathbb{R}^{2m})}.
\end{align*}
Combining the Poisson decay and the $L^2$ norm, and summing the geometric series over $k \ge 0$, we split the variables entirely. By choosing $N_1, N_2, N_3$ sufficiently large, the summation converges, and we extract the explicit decay parameters corresponding to the dyadic scales $i$ and $j$:
\begin{align*}
    \int_{A_{i,j}} U_{a_R}^*(\mathbf{x}) \, d\mathbf{x} &\le |A_{i,j}| \sup_{\mathbf{x} \in A_{i,j}} U_{a_R}^*(\mathbf{x}) \\
    &\lesssim (2^i \ell_1) (2^j \ell_2)^{m-1} \sum_{k=0}^\infty \frac{1}{(2^{k+i}\ell_2)^{2m+N_1+N_2}} \frac{\big( \ell_1 (2^{k+i}\ell_2)^{m-1} \big)^{1/2}}{(2^{k+i}\ell_2)^{N_3}} \| \triangle_{twist}^{N_3} b_R \|_{L^2(\mathbb{R}^{2m})} \\
    &\lesssim |R|^{1/2} 2^{-i \gamma_1} 2^{-j \gamma_2} \| \triangle_{twist}^{N_3} b_R \|_{L^2(\mathbb{R}^{2m})},
\end{align*}
where $\gamma_1, \gamma_2 > 0$ depend on the number of cancellation conditions $N_1, N_2, N_3$.

Finally, summing these bounds over the external dyadic grid $i \ge 6$ and $j \ge j_0+l$ evaluates the geometric series at its lower bounds. Since $2^{j_0 m} \simeq |\widehat{I_2}|/|I_2|$, the $j$-summation explicitly extracts the target decay factor required by the type ${\rm I\!I}$ covering lemma:
\begin{align*}
    \int_{\mathfrak{C}} U_{a_R}^*(\mathbf{x}) \, d\mathbf{x} \lesssim |R|^{1/2} \left( \frac{\ell(\widehat{I_2})}{\ell(I_2)} \right)^{-\kappa} \| \triangle_{twist}^{N_3} b_R \|_{L^2(\mathbb{R}^{2m})}.
\end{align*}
Combining this with the Cauchy--Schwarz inequality over all $R \in m^{\rm I\!I}(\Omega^{\rm I\!I})$ and summing via the twisted covering lemma, we absorb the volumetric factors and bound the integral by a uniform constant $C$. The remaining spatial configurations for the twisted complement, as well as the structures for types ${\rm I\!I\!I, I\!V, V}$, follow by identical structural symmetries.
\end{proof}

\bigskip
\bigskip

\noindent {\bf Acknowledgement:} J. Li is supported by ARC DP 260100485. C.-W. Liang  is supported by NSTC through grant 111-2115-M-002-010-MY5.   Q. Wu is supported by the National Natural Science Foundation of China (No. 12171221) and Taishan Scholars Program for Young Experts of Shandong Province (tsqn202507265).

\bigskip

(J. Li) School of Mathematical and Physical Sciences, Macquarie University, NSW, 2109, Australia\\ 
{\it E-mail}: \texttt{ji.li@mq.edu.au}\\

(C.-W. Liang) Department of Mathematics, National Taiwan University, Taiwan.\\ 
{\it E-mail}: \texttt{d10221001@ntu.edu.tw}\\

(C. Wen) Department of Mathematics, Sun Yat-sen University, Guangzhou, 510275, China.\\ 
{\it E-mail}: \texttt{wenchj@mail2.sysu.edu.cn}\\

(Q. Wu)  School of Mathematics and Statistics, Linyi University, Linyi 276000, China.\\
{\it E-mail}: \texttt{wuqingyan@lyu.edu.cn}\\
\end{document}